\newtheorem{thm}{Theorem}[subsection]
\newtheorem{lem}[thm]{Lemma}
\newtheorem{introthm}{Theorem}
\newtheorem{prop}[thm]{Proposition}
\newtheorem{cor}[thm]{Corollary}
\newtheorem*{thm*}{Theorem}
\newtheorem*{prop*}{Proposition}
\newtheorem*{cor*}{Corollary}
\newtheorem*{conj*}{Conjecture}
\theoremstyle{definition}
\theoremstyle{definition}
\newtheorem{rmk}[thm]{Remark}
\theoremstyle{definition}
\newtheorem{df}[thm]{Definition}
\newcommand{\ZZ}{\mathbb Z}
\newcommand{\NN}{\mathbb N}
\newcommand{\RR}{\mathbb R}
\newcommand{\QQ}{\mathbb Q}
\newcommand{\CC}{\mathbb C}
\newcommand{\KK}{\mathbb K}
\def\sign{\mathop{\mathrm{sign}}\nolimits}
\def\diag{\mathop{\mathrm{diag}}\nolimits}
\renewcommand{\Re}{{\rm Re}}
\renewcommand{\Im}{{\rm Im}}
\tikzset{invclip/.style={clip,insert path={{[reset cm]
      (-\maxdimen,-\maxdimen) rectangle (\maxdimen,\maxdimen)
    }}}}
\begin{document}

\author{Hanno von Bodecker\footnote{Fakult{\"a}t f{\"u}r Mathematik, Universit{\"a}t Bielefeld, Germany} \and Sebastian Thyssen\footnote{Fakult\"at f\"ur Mathematik, Ruhr-Universit\"at Bochum, Germany}}

\title{Topological Automorphic Forms via Curves}

\date{}

\maketitle

\begin{abstract} 
We produce first examples of $p$-local height three TAF homology theories. The corresponding one-dimensional formal groups arise as  split summands of the formal groups of certain abelian three-folds, the Shimura variety of which can be reinterpreted as moduli of a family of Picard curves. This allows an explicit description of an automorphic form valued genus in terms of the coefficients of these curves. Moreover, our construction is such that the theories naturally come with restriction maps to TAF theories of lower height.
\end{abstract}

%\tableofcontents

\section{Introduction}

There is a surprisingly strong connection between algebraic topology, more precisely stable homotopy theory, and algebraic geometry. At the heart of this connection lies the theory of formal groups. Being group objects in the category of formal schemes, formal groups are natural objects of interest in algebraic geometry. The biggest, or perhaps most important, class of examples arises via the completion-at-the-identity functor from the category of  abelian schemes.

In algebraic topology formal groups appear when evaluating complex orientable cohomology theories on the $H$-space $\mathbb{C}P^{\infty}$. An actual choice of complex orientation then corresponds to a choice of coordinates on the formal group and allows to express the group structure via a formal group law. There is a ring, the so called Lazard ring, which carries a universal formal group law and thus classifies formal group laws via ring homomorphisms out of it. On the side of topology it turns out \cite{Quillen:1969jh} that this ring is  the coefficient ring of the universal complex oriented cohomology theory $MU$ of complex cobordism. In fact, the stack associated to the Hopf algebroid comprising the Lazard ring and the coalgebra of strict isomorphisms of formal group laws %$(MU_*, MU_*MU)$
 \textit{is} the moduli stack of formal groups.

It is worth noting that this moduli stack is stratified by an invariant called height and that the shadow of this stratification can be seen when dealing with Adams-Novikov spectral sequences which are based on complex oriented theories. In this context the filtration is called the chromatic height filtration.

There are many examples of complex oriented cohomology theories of the various heights, e.g.\ for every height there is a Morava $E$-theory (at every prime), but considering the connection between topology and algebraic geometry outlined above it is quite natural to ask, which of these complex oriented theories can be related not only to formal groups, but also to abelian schemes. 

From the algebraic geometric perspective the obvious starting point for any attempt to answer this question are elliptic curves, the $1$-dimensional abelian schemes. In \cite{Landweber:1995sw} the authors used bordism theory to construct complex oriented cohomology theories out of elliptic curves. More precisely, the evaluation of these \textit{elliptic} theories on $\mathbb{C}P^{\infty}$ turn out to be formal groups of elliptic curves over the coefficient ring of the theory. The development of the theory of elliptic cohomology culminated in the construction of the spectrum of topological modular forms. Despite being neither complex orientable nor elliptic, this spectrum can be interpreted as ``universal elliptic theory of height two'',  as it detects height two phenomena and has a (unique) map to every elliptic theory.

Formal groups of elliptic curves are of height one or two depending on whether the elliptic curve is ordinary of supersingular. From a moduli perspective the topologically interesting points are the supersingular points in $\mathcal{M}_{ell}$, the moduli stack of elliptic curves, and the ordinary locus is merely only necessary to interpolate between those and allow a flat map from  $\mathcal{M}_{ell}$ to  $\mathcal{M}_{FG}$, the moduli stack of formal groups.

As the height of the formal group of an abelian scheme is up to a factor of two bound by its dimension, one obviously has to increase the dimension of the abelian schemes  to pass beyond height two. Unfortunately, one then runs into difficulties on the topological side, because formal groups in topology turn out to be always one dimensional. In \cite{Behrens:2010aa} the authors considered Shimura varieties classifying only those $n$-dimensional abelian schemes which had enough structure to split off a one dimensional formal summand of the associated $n$-dimensional formal groups that had height up to $n$. Similar to the elliptic case the global sections are again not complex oriented themselves, but all \'etale opens of the derived Shimura variety give rise to complex oriented cohomology theories.

As beautiful as the theory is, as hard it is to provide examples. There have been a few investigations \cite{Hill:2010aa}, \cite{Behrens:2011aa}, \cite{Lawson:2015aa} of Shimura curves classifying abelian schemes up to dimension two, and thus cohomology theories of maximal height two. Also, it is hard to get any feeling for the formal group involved.
\bigskip

In this paper we produce a first example of a height three TAF theory by refining the approach taken in \cite{Bodecker:2016ad}. Over the Eisenstein integers  we access the Shimura varieties of structured abelian two and three-folds via the Torelli image of special families of curves of genus two and three, respectively. Though, the focus of the paper is the genus three case, we start with the two dimensional test case. Here, we present a thorough investigation of the geometry of the complex points of the Shimura variety and the automorphic forms, which is analogous to that of \cite{Bodecker:2016ad} in the Gaussian case. We introduce a genus $\varphi^L$ and use it to build the desired examples of topological automorphic form theories for this unitary group.
\begin{introthm}
Let $G$ be the unitary group $U(1,1;\mathbb{Z}[\zeta_3])$ and $M_{*}^{R}(G)$ be the ring of $R$-valued automorphic forms for $G$. 
\begin{itemize}
\item[i)] For all primes $p\equiv 1 \mod 3$, we have
\[
\varphi^L(BP_*) \subset M^{\mathbb{Z}_{(p)}}_*(G) \cong \mathbb{Z}_{(p)}[\kappa^2, \lambda] \subset \mathbb{Z}_{(p)}[\kappa, \lambda].
\]
\item[ii)] For $p=7,13$, the genus $\varphi^L$ gives $M_{*}^{\ZZ_{(p)}}(G)[\Delta_{6}^{-1}]$ the structure of a Landweber exact $BP_*$-algebra of height two. In particular, the functors
\[
TAF^{U(1,1;\ZZ[\zeta_3])}_{(p),*}(\cdot):=BP_{*}(\cdot)\otimes_{\varphi^L}M^{\ZZ_{(p)}}_{*}(G)[\Delta_{6}^{-1}]
\]
define a homology theory.
\end{itemize}
\end{introthm}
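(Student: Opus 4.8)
The plan is to establish the two parts separately, since part (i) is primarily a computation about the image of the genus, while part (ii) is the genuine homotopy-theoretic assertion. For part (i), I would first pin down the structure of the automorphic forms ring $M_*^{\ZZ_{(p)}}(G)$. Having identified the Shimura variety for $U(1,1;\ZZ[\zeta_3])$ with a moduli of genus-two curves via the Torelli map, I expect generators $\kappa$ and $\lambda$ to arise as explicit coefficients of the defining equations of these curves, with weights determined by the scaling action on the curve coordinates. The claim that the invariant subring is $\ZZ_{(p)}[\kappa^2,\lambda]$ rather than the full $\ZZ_{(p)}[\kappa,\lambda]$ should come from a sign or character obstruction: some automorphism (likely a hyperelliptic involution or a unit in $\ZZ[\zeta_3]$ acting on the curve) sends $\kappa \mapsto -\kappa$ while fixing $\lambda$, so only even powers of $\kappa$ descend to genuine automorphic forms. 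I would then compute $\varphi^L$ on the Araki or Hazewinkel generators $v_1, v_2$ of $BP_*$ by evaluating the genus on the relevant coefficients, and verify that the resulting elements land in $\ZZ_{(p)}[\kappa^2,\lambda]$; the containment $\varphi^L(BP_*) \subset M_*^{\ZZ_{(p)}}(G)$ then reduces to checking finitely many generators.

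For part (ii), the heart is the application of the Landweber exact functor theorem, so I would first make $M_*^{\ZZ_{(p)}}(G)[\Delta_6^{-1}]$ into a $BP_*$-algebra via $\varphi^L$ and then verify the Landweber criterion at height two. Concretely, writing the image of the Hazewinkel generators as $\varphi^L(v_0)=p$, $\varphi^L(v_1)$, $\varphi^L(v_2)$, I must show that for each $n$ the element $\varphi^L(v_n)$ acts as a \emph{non-zero-divisor} on $M_*^{\ZZ_{(p)}}(G)[\Delta_6^{-1}]/(\varphi^L(v_0),\dots,\varphi^L(v_{n-1}))$, and that this quotient vanishes for $n\geq 3$. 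The height-two assertion is exactly the statement that $\varphi^L(v_2)$ becomes invertible after inverting $\Delta_6$, so that the quotient by $(p,\varphi^L(v_1),\varphi^L(v_2))$ is zero; this is presumably why $\Delta_6$ is inverted in the first place, and why the statement is restricted to $p=7,13$, where the arithmetic of the supersingular (height-raising) locus behaves well.

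The main obstacle will be verifying the regularity conditions for the sequence $(p, \varphi^L(v_1), \varphi^L(v_2))$, and in particular controlling the reduction mod $p$. This requires understanding how the height stratification of the Shimura variety interacts with $\Delta_6$: I expect $\Delta_6$ to cut out exactly the non-smooth or height-exceeding locus, so that inverting it removes the points where the formal summand fails to have the expected height behaviour. The delicate point is checking that $\ZZ_{(p)}[\kappa^2,\lambda][\Delta_6^{-1}]/(p,\varphi^L(v_1))$ has $\varphi^L(v_2)$ as a unit \emph{and} no $p$-torsion obstruction to regularity; this is where the restriction to the primes $p=7,13$ enters, and I would verify it by a direct computation of $\varphi^L(v_1)$ and $\varphi^L(v_2)$ in terms of $\kappa^2$ and $\lambda$, reducing modulo $p$ and factoring out $\Delta_6$. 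Once the Landweber criterion is confirmed, the homology-theory conclusion is immediate from the exact functor theorem applied to the tensor product $BP_*(\cdot)\otimes_{\varphi^L} M_*^{\ZZ_{(p)}}(G)[\Delta_6^{-1}]$.
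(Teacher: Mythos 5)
Your part (ii) is essentially the paper's argument: make $M_*^{\ZZ_{(p)}}(G)[\Delta_6^{-1}]$ a $BP_*$-algebra via $\varphi^L$ and verify Landweber's criterion by direct computation at $p=7,13$ (the paper finds $v_1\equiv\lambda-\Delta_6 \bmod 7$ and $v_2\equiv\Delta_6^{8}\bmod(7,v_1)$, and analogously $v_2\equiv\Delta_6^{28}\bmod(13,v_1)$ at $13$, so the quotient by $(p,v_1,v_2)$ vanishes and the chain terminates at height two). Two small corrections to your framing: what is needed is invertibility of $\varphi^L(v_2)$ \emph{modulo} $(p,\varphi^L(v_1))$, not absolutely; and $\Delta_6$ does \emph{not} cut out the full non-smooth locus --- the curve discriminant is $\Delta_C=2^{10}\cdot3^{12}\cdot\lambda^2\cdot\Delta_6^2$, and $\Delta_6$ only excludes the cusp. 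The point of restricting to $p=7,13$ is exactly that the hands-on computations show inverting the cusp form alone already suffices there, whereas the general theory would require inverting all of $\Delta_C$.

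The genuine gap is in your part (i). You propose to verify integrality of $\varphi^L(BP_*)$ by computing the images of the Hazewinkel generators and note that "the containment then reduces to checking finitely many generators" --- but $BP_*=\ZZ_{(p)}[v_1,v_2,v_3,\dots]$ is polynomial on \emph{infinitely} many generators, and the containment requires $p$-integrality of $\varphi^L(v_n)$ for every $n$. No finite computation with Legendre-polynomial expressions can establish this. The paper's proof is instead geometric, via an arithmetic fracture square: $u$ is a local parameter at the point $P=(0,1)$ at infinity of $C''\colon v^2=1-2\kappa u^3+\lambda u^6$; the distinguished differential $du/v$ is precisely $\log'_{\varphi^L}(u)\,du$; and for $p\equiv1\bmod 3$ the idempotents of $\ZZ[\zeta_3]\otimes\ZZ_p\cong\ZZ_p\times\ZZ_p$ split the formal group of the Jacobian over the $p$-completion of $\ZZ[\kappa,\lambda,\Delta_C^{-1}]$, producing a one-dimensional summand whose formal differential pulls back to $du/v$ on the nose. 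Comparing the rational group law with this $p$-complete one in the fracture square $\ZZ_{(p)}\to\ZZ_p$, $\QQ\to\QQ_p$ shows the group law --- hence all of $\varphi^L(BP_*)$ --- is defined over $\ZZ_{(p)}[\kappa,\lambda]$ at once. Finally, the refinement to $\ZZ_{(p)}[\kappa^2,\lambda]$ is \emph{not}, as you suggest, obtained by applying a sign/character argument to the image: it is a degree count. Since $p\equiv1\bmod 3$ and $p$ is odd, $p\equiv1\bmod 6$, so $|v_n|=2(p^n-1)\equiv0\bmod 12$, while $\kappa$ has topological degree $6$; hence only even powers of $\kappa$ can appear. (Your involution heuristic is morally right for the \emph{target}: the coset of $-\mathbbm{1}\in G$ acts by $(-1)^3$ on weight-three forms, which is how the paper gets $M_*^{\CC}(G)\cong\CC[\kappa^2,\lambda]$; but that identifies the ring of forms, and does not by itself show the genus lands in it.)
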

\bigskip

Before we turn to the three dimensional case of interest, note that there hasn't been any example of a height three TAF in the literature, so far. Let us also remark, that in this dimension things get less tractable, even over the complex numbers, in the sense that we can't actually fall back on modular forms to compute rings of automorphic forms ourselves. Instead, we rely on the analysis \cite{Holzapfel:1986aa,Shiga:1988aa} of the moduli of certain families of curves already investigated by Picard \cite{Picard:1883aa}.

In the case of inverted discriminant, i.e.\ smooth curves, the associated abelian schemes are irreducible. For primes $7$ and $13$ we produce a $v_3$-periodic homology theory by describing a genus $\varphi^{P}$ valued in automorphic forms for the unitary groups in question. 
\begin{introthm}\label{introthm B}
Let $\Gamma$ be the unitary group $U(2,1;\mathbb{Z}[\zeta_3])$ and $M_{*}^{R}(\Gamma)$ be the ring of $R$-valued automorphic forms for $\Gamma$. 
\begin{itemize}
\item[i)] For all primes $p\equiv 1 \mod 3$, we have
\[
\varphi^P(BP_*) \subset M^{\mathbb{Z}_{(p)}}_*(\Gamma) \cong \mathbb{Z}_{(p)}[G_2, G_4, G_3^2] \subset \mathbb{Z}_{(p)}[G_2, G_3, G_4] .
\]
\item[ii)] For $p=7,13$, the genus $\varphi^P$ gives $M_{*}^{\ZZ_{(p)}}(\Gamma)[\Delta_{C}^{-1}]$ the structure of a Landweber exact $BP_*$-algebra of height three. In particular, the functors
\[
TAF^{U(2,1;\ZZ[\zeta_3])}_{(p),*}(\cdot):=BP_{*}(\cdot)\otimes_{\varphi^P}M^{\ZZ_{(p)}}_{*}(\Gamma)[\Delta_{C}^{-1}]
\]
define a homology theory.
\end{itemize}
\end{introthm}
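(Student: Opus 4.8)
The plan is to treat Theorem~A as the template and prove Theorem~B along the same two-step route: part (i) is a structural identification of where the genus $\varphi^P$ takes values, and part (ii) is an application of the Landweber Exact Functor Theorem once the images of the $v_n$ have been located inside the ring of automorphic forms. For (i) I would first pin down $M_*^{\ZZ_{(p)}}(\Gamma)$. Over $\CC$ the computations of Holzapfel and Shiga \cite{Holzapfel:1986aa,Shiga:1988aa}, read through the Torelli description of the Shimura variety as a moduli of Picard curves \cite{Picard:1883aa}, present this ring as a polynomial algebra on generators $G_2,G_3,G_4$ of weights $2,3,4$, the $G_i$ being realized explicitly as coefficients of the curves. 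I would promote this to the stated $\ZZ_{(p)}$-integral isomorphism for $p\equiv 1\bmod 3$ by using that such $p$ split in $\ZZ[\zeta_3]$, so that no small prime is inverted in passing from the analytic to the arithmetic generators and the moduli problem stays unobstructed at $p$; the even subring $\ZZ_{(p)}[G_2,G_4,G_3^2]\subset\ZZ_{(p)}[G_2,G_3,G_4]$ is forced because the genus lands in even topological degrees, so the odd-weight $G_3$ can contribute only through $G_3^2$.

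The containment $\varphi^P(BP_*)\subset M_*^{\ZZ_{(p)}}(\Gamma)$ then amounts to expanding the formal group law of the one-dimensional summand from the Picard normal form, as in the preceding sections, and checking that its coefficients are polynomials of the correct even weight in the $G_i$; this is a direct comparison of power-series expansions. For (ii) I would invoke the Landweber Exact Functor Theorem in the form used for topological automorphic forms \cite{Behrens:2010aa} and for elliptic homology \cite{Landweber:1995sw}: writing $M:=M_*^{\ZZ_{(p)}}(\Gamma)[\Delta_C^{-1}]$ as a $BP_*$-algebra through $\varphi^P$, the functor $BP_*(-)\otimes_{\varphi^P}M$ is a homology theory as soon as $\varphi^P(v_n)$ is a non-zero-divisor on $M/(p,v_1,\dots,v_{n-1})M$ for every $n\geq 0$, where $v_0=p$. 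Since $M$ is torsion-free of characteristic zero, $p$ is a non-zero-divisor, and $M/p\cong\mathbb{F}_p[\bar G_2,\bar G_4,\bar G_3^2]$ is a domain. The remaining content is to locate $\varphi^P(v_1),\varphi^P(v_2),\varphi^P(v_3)$ modulo $p$ via the height stratification of the special fibre: $\bar v_1$ is, up to a unit, the Hasse invariant of the summand, $\{\bar v_1=0\}$ is the locus of height $\geq 2$, $\{\bar v_1=\bar v_2=0\}$ the locus of height $\geq 3$, and there the one-dimensional summand of a three-dimensional abelian scheme necessarily has height \emph{exactly} three, so $\bar v_3$ is a unit on it.

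Concretely, for $p=7,13$ I would reduce $G_2,G_4,G_3^2$ modulo $p$, verify that $\bar v_1$ and $\bar v_2$ form a regular sequence on the smooth locus $\{\Delta_C\neq 0\}$, and check that $\varphi^P(v_3)$ is a unit modulo $(p,v_1,v_2)$ after inverting $\Delta_C$, equivalently that $M/(p,v_1,v_2,v_3)=0$; the role of inverting the discriminant is exactly to delete the singular-curve boundary, on which the stratification, and hence the regular-sequence property, would otherwise fail. The main obstacle is precisely this regular-sequence verification: there is no formal reason the Araki-Hazewinkel generators should map to a regular sequence, so one must identify $\varphi^P(v_1)$ and $\varphi^P(v_2)$ with honest automorphic forms modulo $p$, prove they are non-zero-divisors in the reduced two-variable polynomial ring, and establish that $v_3$ becomes a unit modulo $(p,v_1,v_2)$ upon inverting $\Delta_C$. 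This rests on controlling the Newton/height stratification of the mod-$p$ fibre of the unitary Shimura variety and on the finite, prime-by-prime computation that succeeds, and places the height-three stratum inside the smooth locus, precisely for $p=7,13$.
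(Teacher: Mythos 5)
Your part (ii) is, in outline, exactly the paper's route: the paper checks Landweber's criterion by hand, computing the images of the Hazewinkel generators and finding $v_{1}\equiv 2G_{2}$, $v_{2}\equiv G_{4}^{4}-2G_{3}^{4}G_{4} \mod (7,v_{1})$, and $v_{3}^{2}\equiv\Delta_{C}^{19}\mod(7,v_{1},v_{2})$ at $p=7$ (with the analogous explicit congruences and $\Delta_{C}^{366}\equiv-v_{3}^{6}$ at $p=13$), using that the successive quotients $\mathbb{F}_{p}[G_{2},G_{3},G_{4}]$, $\mathbb{F}_{p}[G_{3},G_{4}]$ (resp.\ $\mathbb{F}_{13}[G_{2},G_{3}]$) are domains. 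Your stratification gloss --- Hasse invariant, height loci, height exactly three forcing $v_{3}$ invertible --- corresponds to the paper's own remark that the general Behrens--Lawson theory would give regularity for all split primes once the discriminant is inverted; but the proofs actually given are the finite prime-by-prime computations you anticipate, so here you and the paper agree. (One minor overstatement: inverting $\Delta_{C}$ is sufficient but not necessary; the paper later shows inverting only $G_{3}$ already suffices.)

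The genuine gap is in part (i). You assert the containment $\varphi^{P}(BP_{*})\subset M_{*}^{\mathbb{Z}_{(p)}}(\Gamma)$ is ``a direct comparison of power-series expansions.'' It is not: the logarithm $\int(1+G_{2}u^{6}+G_{3}u^{9}+G_{4}u^{12})^{-1/3}du$ has genuine denominators, and $p$-integrality of the coefficients of the associated group law is precisely what must be proved --- there is no integral side to compare against until you construct one. The one-dimensional summand you invoke does \emph{not} exist over $\mathbb{Z}_{(p)}[G_{2},G_{3},G_{4}]$; the paper produces it only after $p$-completion, using that $p$ splits so that $\mathbb{Z}[\zeta_{3}]\otimes\mathbb{Z}_{p}\cong\mathbb{Z}_{p}\times\mathbb{Z}_{p}$ supplies idempotents in $\mathrm{End}(\mathbb{G}_{\mathrm{Jac}(C)})$ splitting off a one-dimensional $\mathbb{G}^{-}$ (this in turn needs the preliminary lemmas that $\mathbb{G}_{\mathrm{Jac}(C)}$ is a $\mathbb{Z}_{p}$-module with $\mathbb{Z}_{p}$ central among endomorphisms). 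Two further inputs you omit then close the argument: the verification that $u$ is a local parameter at the infinite point, via $(v-1)(v^{2}+v+1)=u^{3}\cdot p(u^{3})$ with $v^{2}+v+1$ a unit in the localization, so that the pullback of the formal differential of $\mathbb{G}^{-}$ agrees with $du/v$ on the nose and the $p$-complete group law of $\mathbb{G}^{-}$ has the same logarithm derivative as $\varphi^{P}$; and an arithmetic fracture square (over $\mathbb{Z}_{(p)}$, $\mathbb{Z}_{p}$, $\mathbb{Q}$, $\mathbb{Q}_{p}$) gluing this $p$-complete law with the rational one to conclude the law is defined over $\mathbb{Z}_{(p)}[G_{2},G_{3},G_{4}]$. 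Finally, your reason for landing in $\mathbb{Z}_{(p)}[G_{2},G_{4},G_{3}^{2}]$ (``the genus lands in even topological degrees'') is off, since every form here has even topological degree ($G_{3}$ sits in degree $18$); the paper's argument is that $p\equiv1\mod 3$ forces $p\equiv1\mod 6$, so the image of $BP_{*}$ is concentrated in degrees divisible by twelve, which is what excludes odd powers of $G_{3}$.
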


However our genus behaves even better. In fact it makes sense on more than the locus of smooth curves, which means that one is fine with inverting less than the (vanishing locus of the) discriminant. Analyzing congruences between automorphic forms we find that inverting the form $G_{3}$ (which is of lower weight than the discriminant) is sufficient for the resulting theory to be $v_3$-periodic. Since we invert less than before, this clearly imports certain types of singular curves into the picture. We investigate the different types of singularities for Picard curves and how they relate this genus and its height three theories to genera and theories for smaller unitary groups and lower chromatic height.
\bigskip

To emphasize this more, let us describe the situation from a different point of view. The configuration space of five (ordered) distinct points in $\mathbb{P}^1$ corresponds to the space of smooth Picard curves of genus $3$ (with markings). This space can thus be identified with the open dense subspace of irreducible varieties in the Shimura variety $Sh$ by considering the Jacobians of these Picard curves. Moreover, allowing points in the configuration space to come together creates singular Picard curves, which, though not possessing Jacobians themselves, can be associated to points in the complement of the irreducible locus of $Sh$. As we keep infinity as distinct point and also do not allow all the remaining four points to coincide, there are precisely three possible cases of how points can come together. These different degenerations types (of curves) yield a stratification of the (compactified) Shimura variety. The case of two points coming together is described by a $U(1,1)$-stratum in $Sh$. The inclusion of this stratum induces restriction morphisms of automorphic forms. In particular, the standard inclusion 
$$ U(1,1; \mathbb{Z}[\zeta_3]) \to U(2,1; \mathbb{Z}[\zeta_3])$$
yields a map
$$ r \colon M^{\ZZ_{(p)}}_{*}(U(2,1; \mathbb{Z}[\zeta_3]))\to M^{\ZZ_{(p)}}_{*}(U(1,1; \mathbb{Z}[\zeta_3])). $$

Our genera and the associated homology theories are well behaved with respect to this restriction morphism.

\begin{introthm}
Let $r$ be the restriction map defined above.
\begin{itemize}
\item[i)] For all primes $p\equiv 1 \mod 3$  the genus $r\circ \varphi^P$ is equivalent to $\varphi^L$; more precisely, the associated group laws are isomorphic over $\ZZ_{(p)}[\kappa,\lambda]$. 
\item[ii)] For $p=7, 13$ the homomorphism $r$ induces a morphism of Landweber exact algebras $$ \mathbb{Z}_{(p)}[G_2, G_3^{\pm 1}, G_4] \to \mathbb{Z}_{(p)}[\kappa^{\pm 1}, \lambda^{\pm 1}]$$ decreasing height by one.
\end{itemize}
\end{introthm}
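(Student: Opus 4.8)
The plan is to read the restriction map $r$ geometrically, as pullback along the inclusion $\iota$ of the $U(1,1)$-stratum into $Sh$, the boundary stratum parametrizing the degenerate Picard curves obtained by letting two of the four finite branch points collide. Since $\varphi^{P}$ and $\varphi^{L}$ are by construction the genera classifying the one-dimensional $\ZZ[\zeta_3]$-linear formal summands $\mathbb{G}^{P}$ and $\mathbb{G}^{L}$ of the formal groups of the universal abelian three- and two-folds, part i) is equivalent to the statement that $\iota^{*}\mathbb{G}^{P}\cong\mathbb{G}^{L}$ as formal groups over $\ZZ_{(p)}[\kappa,\lambda]$. Once this is established the induced formal group laws are isomorphic for any compatible choice of coordinate, which is precisely the asserted equivalence of $r\circ\varphi^{P}$ and $\varphi^{L}$; the clause ``isomorphic over $\ZZ_{(p)}[\kappa,\lambda]$'' then records that this isomorphism is defined integrally at $p$.

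To prove the isomorphism of summands I would first analyze the degeneration on the level of curves and Jacobians: as two branch points merge the Picard curve acquires a singularity and its Jacobian degenerates, and I would identify the singularity type together with the resulting (generalized) Jacobian. The crux is to track the $\ZZ[\zeta_3]$-action---equivalently the $\zeta_3$-eigenspace decomposition of the relevant $p$-divisible group or Dieudonn\'e module---through this degeneration, and to verify that the one-dimensional CM summand $\mathbb{G}^{P}$ specializes to a height-two formal group carried by the abelian part of the degeneration, which the CM data identify with the summand $\mathbb{G}^{L}$ of the genus-two curve governing the stratum; the drop from height three to height two reflects that one unit of $p$-divisible height is absorbed by the part of the Jacobian created by the singularity. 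In parallel I would make the computation explicit on generators: weight considerations force $r(G_2),r(G_3^{2}),r(G_4)$ to be forms of the respective weights in $\ZZ_{(p)}[\kappa,\lambda]$, and degenerating the defining quartic (equivalently the period integrals) pins them down, after which comparing the logarithm of $\iota^{*}F^{P}$ with that of $F^{L}$ over $\QQ[\kappa,\lambda]$ exhibits the isomorphism, whose coefficients one checks to lie already in $\ZZ_{(p)}[\kappa,\lambda]$.

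I expect the main obstacle to be exactly this tracking of the correct summand: the full formal group of the Jacobian breaks up under degeneration, and one must verify that the $\ZZ[\zeta_3]$-structure singles out the height-two abelian summand rather than mixing in the piece produced by the singularity, and that the two a priori independent coordinate choices differ by a genuine, integrally defined isomorphism at $p$. This is where the CM analysis, and not a formal manipulation of series, carries the argument.

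For part ii) most of the content follows from part i). By Theorem~B, together with the refinement that inverting the lower-weight form $G_3$ already suffices for $v_3$-periodicity, the source $\ZZ_{(p)}[G_2,G_3^{\pm1},G_4]$ is a Landweber exact $BP_*$-algebra of generic height three; the target $\ZZ_{(p)}[\kappa^{\pm1},\lambda^{\pm1}]$ is a finite free, hence flat, extension of the height-two algebra $M^{\ZZ_{(p)}}_{*}(G)[\Delta_{6}^{-1}]$ from Theorem~A and so is again Landweber exact of height two. It then remains to see that $r$ extends to these localizations, i.e.\ that the extension of $r$ to the full rings of forms sends $G_3$ to an invertible element; by the computation in part i) it sends $G_3$ to a $p$-local unit times a monomial in $\kappa,\lambda$ involving an odd power of $\kappa$ (a square root of $r(G_3^{2})$), which at once explains the odd classes $\kappa^{\pm1}$ on the right and shows that inverting $G_3$ forces precisely the inversions recorded there. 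The height drop is then immediate: the $U(1,1)$-stratum lies in the locus $\{G_3\neq 0\}$ but is a positive-codimension boundary stratum on which, by the degeneration above, $\mathbb{G}^{P}$ already has height two, so $r$ maps the generic-height-three source onto this height-two stratum, and by part i) the restricted formal group law is isomorphic to $F^{L}$. The only remaining point is that $r$ intertwines the two $BP_*$-algebra structures only up to the isomorphism of part i); but Landweber exactness and the resulting homology theory depend on the formal group law only up to isomorphism, so this isomorphism is exactly what promotes the ring map $r$ to a morphism of the associated height-three and height-two theories, lowering the chromatic height by one.
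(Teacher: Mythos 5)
Your geometric reading of $r$ is correct in outline, and your ``parallel'' explicit strand is in fact the route the paper takes: apply $r$ to the coefficients of the Picard family, recognize the result as a type-$(2,1,1)$ degeneration, and pass to its hyperelliptic model $s^{2}=E_{1}^{6}+2(E_{1}^{3}-2E_{3})t^{3}+t^{6}$ via $t=y/(x-c)$. But you leave the one genuinely nontrivial point unargued: why the isomorphism of group laws is defined over $\ZZ_{(p)}[\kappa,\lambda]$ and not merely over $\QQ[\kappa,\lambda]$ --- a rational isomorphism between two $p$-integral formal group laws need not be $p$-integral, so ``whose coefficients one checks to lie already in $\ZZ_{(p)}[\kappa,\lambda]$'' is precisely the gap, not a routine verification. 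The paper closes it with a local-parameter argument: $t^{-1}=(x-c)/y$ is simultaneously a local parameter at infinity for the hyperelliptic model and for the degenerate Picard curve (in the affine chart around $[0,1,0]$ one has $t^{-1}=\tilde{x}-c\tilde{z}$ with $\tilde{z}=O(\tilde{x}^{4})$), so the change of coordinate relating it to the Picard parameter $u$ is an integral power series, which immediately gives the isomorphism over $\ZZ_{(p)}[\kappa,\lambda]$. Your heavier Dieudonn\'e/CM strand is both unnecessary and geometrically off: the $U(1,1)$-stratum parametrizes honest abelian threefolds $A_{2}\times E$ (per the paper's degeneration analysis, the $(2,1,1)$ collisions give products, not semi-abelian varieties), so no unit of height is ``absorbed by the part of the Jacobian created by the singularity''; the distinguished summand simply restricts to the summand carried by the $A_{2}$-factor, as witnessed by $dx/y$ pulling back to $3t\,dt/s$.

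For part ii) you assume the decisive input rather than proving it: that inverting $G_{3}$ alone yields a Landweber exact algebra of height three. Theorem B only gives Landweber exactness after inverting $\Delta_{C}$, and the ``refinement'' you cite from the introduction is exactly what the proof of this theorem must establish; the paper does so by the congruences $\Delta_{C}^{57}\equiv G_{3}^{228}\bmod(7,v_{1},v_{2})$ and $v_{3}^{18}\equiv-G_{3}^{4392}\bmod(13,v_{1},v_{2})$, so that $v_{3}$ becomes a unit once $G_{3}$ is inverted. Without these (or equivalent) computations your argument is circular. Two smaller slips: $\ZZ_{(p)}[\kappa^{\pm1},\lambda^{\pm1}]$ is not a \emph{finite} extension of $M_{*}^{\ZZ_{(p)}}(G)[\Delta_{6}^{-1}]$ --- it is a localization of the rank-two free extension $\ZZ_{(p)}[\kappa,\lambda]$ of $\ZZ_{(p)}[\kappa^{2},\lambda]$, hence still flat, and the paper anyway verifies height-two Landweber exactness there directly; and the stratum does not lie in $\{G_{3}\neq0\}$ (it meets the zero locus where $\kappa\lambda=0$, since $r(G_{3})=\tfrac{1}{8}\kappa\lambda$) --- what matters is only that $r(G_{3})$ becomes invertible in the target, which correctly forces the inversion of $\kappa$ and $\lambda$, as you note and as the paper concludes.
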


\bigskip

We also investigate a level structure closely related to the special level structure $U(2,1;\ZZ[\zeta_3])[\sqrt{-3}]$ and find the analog of  Theorem \ref{introthm B} to hold in this context; we refer the reader to the appendix for the precise statement.

Note  that on the underlying algebraic geometric side of things, much of the necessary mathematics, e.g. the geometry of the moduli of these curves, is classical \cite{Picard:1883aa}, \cite{Bolza:1887aa} or at least known for quite a while \cite{Shiga:1988aa}. A thorough and modern account can be found in \cite{Holzapfel:1986aa} and \cite{Holzapfel:1995aa}.

\subsection*{Acknowledgements}

We learned that Tyler Lawson independently proved a $p$--complete version of Theorem \ref{introthm B}, although by slightly different means, and would like to thank him for helpful feedback on an earlier version of this paper.

The second author thanks the DFG SPP 1786 for financial support and the Ruhr--Universit\"at Bochum for hospitality.

%%%%%%%%%%%%%%%%%%%%%%%%%%%%%%%%%%%%%%%%%%%%%%%%%

%
%
%
%
%
%
%
%
%
%
%
%
%
%
%
%
%
%
%
%%%%%
\section{Some Shimura varieties of unitary type}		\label{SectionUnitarySymplecticGroups}
%%%%%
%
%
%
%
%
%
%
%
%%%%%%
\subsection{Symplectic groups and abelian varieties}		\label{SectionSymplecticGroups}
%%%%%%
%
%
%
%
The matrix $\left( \begin{smallmatrix} &  \mathbbm{1}\\ -\mathbbm{1} & \end{smallmatrix} \right)$, where $\mathbbm{1}$ denotes the unit matrix of rank $n$,
defines a skew bilinear form on $\RR^{2n}$. The symplectic group is defined as
\[
Sp(n)=\{g\in GL(2n;\RR):g^{tr}\left( \begin{smallmatrix} &  \mathbbm{1}\\ -\mathbbm{1} & \end{smallmatrix} \right) g=\left( \begin{smallmatrix} &  \mathbbm{1}\\ -\mathbbm{1} & \end{smallmatrix} \right)\};
\]
it is a non-compact real Lie group of dimension $2n^{2}+n$ and real rank $n$. Moreover, we have the inversion formula
\[
\left(\begin{smallmatrix}A&B\\C&D\end{smallmatrix}\right)^{-1}=\left(\begin{smallmatrix}D^{tr}&-B^{tr}\\-C^{tr}&A^{tr}\end{smallmatrix}\right).
\]
Since $\left( \begin{smallmatrix} &  \mathbbm{1}\\ -\mathbbm{1} & \end{smallmatrix} \right)^{-1}=-\left( \begin{smallmatrix} &  \mathbbm{1}\\ -\mathbbm{1} & \end{smallmatrix} \right)$, the group $Sp(n)$ is closed under transposition. Thus, $g\mapsto (g^{tr})^{-1}$ is an involutive automorphism. Let $$Sp(n)\cap O(2n)\cong U(n)$$ be the fixed point set; it is a maximal compact subgroup, and there is a well-known identification between the associated symmetric space and the Siegel space $\mathbb{S}_{n}$ consisting of symmetric complex $n$-by-$n$ matrices with positive-definite imaginary part,
\[
Sp(n)/U(n)\cong \mathbb{S}_{n}=\{\Omega=\Re(\Omega)+i\Im(\Omega):\Omega^{tr}=\Omega,\ \Im(\Omega)>0\}.
\]
The Siegel upper half space $\mathbb{S}_n$ is a hermitian symmetric domain well known to parametrize (complex) abelian varieties equipped with a principal polarization: For $\Omega\in\mathbb{S}_{n}$, the columns of the matrix $( \mathbbm{1},\Omega)$ constitute a $\ZZ$--module basis for a symplectic lattice 
\[
\Lambda = \mathbb{Z}^n \oplus \Omega\mathbb{Z}^n
\]
in $\mathbb{C}^n$, and the quotient $\CC^{n}/\Lambda$ becomes a principally polarized abelian variety.  Letting 
\[
\sigma\colon Sp(n)\to Sp(n), \quad \sigma(\begin{smallmatrix}A&B\\C&D\end{smallmatrix})=(\begin{smallmatrix}A&-B\\-C&D\end{smallmatrix}),
\]
denote the automorphism introducing a sign in the off-diagonal blocks, we have the left action
\[
( \mathbbm{1},\Omega)\sigma(\begin{smallmatrix}A&B\\C&D\end{smallmatrix})^{-1}=(D^{tr}+\Omega C^{tr}, B^{tr}+\Omega A^{tr}),
\]
allowing us to recover the usual fractional linear transformation on $\mathbb{S}_{n}$,
\[
\left(\begin{smallmatrix}A&B\\ C&D\end{smallmatrix}\right)\cdot\Omega=(A\Omega+B)(C\Omega+D)^{-1}.
\]
Clearly, $Sp(n;\ZZ)$--equivalent points in Siegel space determine isomorphic abelian varieties.

\begin{rmk}\label{relative tangent bundle}
Observe that
\begin{align*}
((C\Omega+D)^{tr})^{-1}(\mathbbm{1},\Omega)&=((C\Omega+D)^{tr})^{-1}(\mathbbm{1},\Omega)\left(\begin{smallmatrix}A&-B\\-C&D\end{smallmatrix}\right)^{-1}\left(\begin{smallmatrix}A&-B\\-C&D\end{smallmatrix}\right)\\
&=(\mathbbm{1},(A\Omega+B)(C\Omega+D)^{-1})\left(\begin{smallmatrix}A&-B\\-C&D\end{smallmatrix}\right)
\end{align*}
This has the following interpretation: The element $g=\left(\begin{smallmatrix}A&B\\C&D\end{smallmatrix}\right)\in Sp(n)$ induces a map from the ppav determined by $\Omega$ to the ppav determined by $\Omega'=(A\Omega+B)(C\Omega+D)^{-1}$, and  $((C\Omega+D)^{tr})^{-1}$ can be identified with the induced map on tangent spaces at the origin. In particular, we have a left action of $Sp(n;\ZZ)$ on $\mathbb{S}_{n}\times\CC^{n}$ given by
\begin{equation}\label{almost a bundle of Lie algebras}
\left(\begin{smallmatrix}A&B\\ C&D\end{smallmatrix}\right)\cdot(\Omega,\vec{w})=\big((A\Omega+B)(C\Omega+D)^{-1},((C\Omega+D)^{tr})^{-1}\vec{w} \big),
\end{equation}
and the quotient tries to be the bundle of Lie algebras of the respective abelian varieties.
\end{rmk}

%
%
%
%
%
%
%
%
%
%%%%%%
\subsection{Hermitian lattices and unitary groups}		\label{SectionUnitaryGroups}
 In this section we consider the case of  imaginary quadratic number fields $\KK=\QQ(\sqrt{-N})$ with class number one and odd discriminant (the case of even discriminant is analogous, for more details we refer the reader to a forthcoming paper). 
In the odd discriminant case, $1$ and $\alpha=-\frac{1}{2}+\frac{\sqrt{-N}}{2}$ constitute a $\ZZ$--module basis for $\mathcal{O}$, the ring of integers of $\KK$. Since we assume that the class number is one, a hermitian $\mathcal{O}$-lattice is a free $\mathcal{O}$-module  equipped with a hermitian form. It is called \textit{integral} if the form takes values in $\mathcal{O}$, \textit{unimodular} if it is self-dual. Equivalently, an integral lattice is unimodular if the determinant of a Gram matrix is invertible. 

Over $\KK$, the hermitian form is diagonalizable; thus, it has a well-defined signature which we write as a pair of non-negative numbers. Recall that for any {\em indefinite} signature, there is a unique hermitian unimodular $\mathcal{O}$--lattice (this can be proved as in the case of unimodular quadratic lattices over $\ZZ$, noting that  for the odd-discriminant case, any unimodular lattice is odd, i.e., contains a vector of odd norm).

\bigskip

Let $H\in GL(n;\ZZ)$ be symmetric and self-inverse, $H=H^{tr}=H^{-1}$. Thus, $H$ can be considered as the Gram matrix of a unimodular hermitian $\mathcal{O}$-lattice. Define the corresponding unitary group of the hermitian form $H$ by $$U(H)=\{g\in GL(n;\CC):g^{\dagger}Hg=H\}.$$ It is a reductive real Lie group. We assume that $H$ has indefinite signature $(r,s)$, so that $U(H)\cong U(r,s)$ is non-compact. 

Due to our assumption on $H$, we have the inversion formula $g^{-1}=Hg^{\dagger}H$; moreover, the group $U(H)$ is closed under conjugation and transposition. In particular, we have an involutive automorphism $g\mapsto(g^{\dagger})^{-1}=HgH$, and its fixed points constitute a maximal compact subgroup (isomorphic to $U(r)\times U(s)$). 

Clearly, the group $$U(H;\mathcal{O})=U(H)\cap GL(n;\mathcal{O})$$ can be identified with the isometry group of the unimodular hermitian lattice determined by $H$.

%
%
%
%
%
%
%
%
%
%%%%%%
\subsection{Standard embedding}			\label{SectionEmbeddings}
%%%%%%
%
%
%%
%
%
%

We return to the group $U(H;\mathcal{O})$, where $H\in GL(n;\ZZ)$ is symmetric and self-inverse. Considering $\mathcal{O}$ as $\ZZ$-module with basis $\{1, \alpha=-\frac{1}{2}+\frac{\sqrt{-N}}{2}\}$, the imaginary part of the hermitian form defines a skew form on the lattice. Taking the $\ZZ$--basis $e_{1},\dots, e_{n},He_{1}\alpha,\dots,He_{n}\alpha$, the skew $\ZZ$--bilinear form $$(U,V)\mapsto (U^{\dagger}HV-V^{\dagger}HU)/\sqrt{-N}$$ takes the standard form. Thus, we have an embedding $$U(H;\mathcal{O})\to Sp(n;\ZZ)$$ given by
\[
X+\alpha Y\mapsto\begin{pmatrix}X&-\frac{N+1}{4}YH\\HY&H(X-Y)H\end{pmatrix};
\]
we compose this with the involutive automorphism 
\[
\sigma\colon Sp(n)\to Sp(n), \quad \sigma(\begin{smallmatrix}A&B\\C&D\end{smallmatrix})=(\begin{smallmatrix}A&-B\\-C&D\end{smallmatrix}),
\]
to define the injective homomorphism
\[
\rho\colon U(H)\to Sp(n),\quad X+\alpha Y\mapsto\begin{pmatrix}X&\frac{N+1}{4}YH\\-HY&H(X-Y)H\end{pmatrix}.
\]
To characterize the image, let $J_{N}=\left(\begin{smallmatrix}&-{\textstyle{\frac{N+1}{4}}}H\\H&-\mathbbm{1}\end{smallmatrix}\right)$ be the matrix representing multiplication by $\alpha$ with respect to the above $\ZZ$--basis. Then we have:
\begin{prop}
\[
\rho\colon U(H)\;\xrightarrow{\cong}\;\left\{g\in Sp(n):\sigma(g)J_{N}=J_{N}\sigma(g)\right\}
\]
\end{prop}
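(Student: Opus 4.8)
The plan is to factor the map as $\rho = \sigma \circ \iota$, where $\iota\colon U(H)\to Sp(n)$ is the pre-$\sigma$ embedding $X+\alpha Y \mapsto \left(\begin{smallmatrix} X & -\frac{N+1}{4}YH \\ HY & H(X-Y)H\end{smallmatrix}\right)$ and $\sigma$ is the involutive automorphism introduced above. The map $\iota$ is the realification of the tautological $\mathbb{C}$-linear action and is therefore a group homomorphism. Since $\sigma^{2}=\mathrm{id}$ and $\sigma$ preserves $Sp(n)$, it suffices to show that $\iota$ is a group isomorphism onto the centralizer $Z:=\{g\in Sp(n):gJ_{N}=J_{N}g\}$ of $J_{N}$. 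Indeed, once this is known, $\mathrm{im}\,\rho=\sigma(\mathrm{im}\,\iota)=\sigma(Z)$, and since $h\in\sigma(Z)\iff\sigma(h)\in Z$ (using $\sigma^{2}=\mathrm{id}$ and that $\sigma$ restricts to an automorphism of $Sp(n)$), we get $\mathrm{im}\,\rho=\{g:\sigma(g)J_{N}=J_{N}\sigma(g)\}$, exactly the asserted set. As $Z$ is a subgroup and $\sigma$ an automorphism, $\sigma(Z)$ is a subgroup and the arrow is indeed an isomorphism.

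The conceptual input is that $J_{N}$ is, by construction, the matrix of multiplication by $\alpha$ on the lattice in the chosen $\ZZ$-basis, and that after tensoring with $\RR$ multiplication by $\alpha$ differs from the complex structure $i$ only by a real affine change (from $\alpha=-\tfrac12+\tfrac{\sqrt{-N}}{2}$ and $i=\sqrt{-N}/\sqrt{N}$). Hence a real $2n\times 2n$ matrix commutes with $J_{N}$ precisely when the $\RR$-linear endomorphism of $\CC^{n}$ it represents is $\CC$-linear. On the other hand, the embedding was arranged so that in this basis the standard symplectic form is (a real multiple of) $\Im H$; thus $g\in Sp(n)$ says exactly that $g$ preserves $\Im H$. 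The forward inclusion $\iota(U(H))\subseteq Z$ is then immediate, since $\iota(h)$ is $\CC$-linear and preserves $H$ and a fortiori $\Im H$; it can equally be checked by the one-line block computation $\iota(X+\alpha Y)J_{N}=J_{N}\iota(X+\alpha Y)$ using $H^{2}=\mathbbm{1}$.

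The substance of the argument, and the step I expect to be the main obstacle, is the reverse inclusion $Z\subseteq\iota(U(H))$: an $M\in Sp(n)$ commuting with $J_{N}$ must come from a unitary transformation. Here I would argue that such an $M$ is $\CC$-linear and preserves $\Im H$, and invoke that for a hermitian form the real part is recovered from the imaginary part together with the complex structure (e.g.\ $\Re H(u,v)=-\Im H(iu,v)$). Since $M$ commutes with $i$ and fixes $\Im H$, it then fixes $\Re H$ as well, hence fixes $H$ and lies in $U(H)$; reading off the blocks $A=X$ and $B=-\tfrac{N+1}{4}YH$ exhibits $M=\iota(X+\alpha Y)$ and simultaneously shows $\iota$ injective. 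Alternatively, and more in the computational spirit of the paper, one can solve $MJ_{N}=J_{N}M$ for $M=\left(\begin{smallmatrix}A&B\\C&D\end{smallmatrix}\right)$ to force exactly the block shape of $\iota$, and then verify that the symplectic relation on $M$ translates into $g^{\dagger}Hg=H$ for $g=X+\alpha Y$; this translation is the genuinely delicate calculation. Either route closes the argument and yields that $\rho$ is an isomorphism onto the stated $\sigma$-twisted centralizer.
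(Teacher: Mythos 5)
Your proof is correct, and its main route is genuinely different from the paper's. The paper proves the proposition by direct block algebra: it multiplies out $\sigma(g)J_{N}$ and $J_{N}\sigma(g)$ for $g=\left(\begin{smallmatrix}A&B\\C&D\end{smallmatrix}\right)\in Sp(n)$ to force $B=-\tfrac{N+1}{4}HCH$ and $D=HAH+CH$, then applies the symplectic inversion formula and substitutes $C=-HY$ to recover the defining relation $g^{\dagger}Hg=H$ of $U(H)$ --- in other words, exactly the ``alternative'' you sketch in your last sentences, including the translation step you flag as delicate. Your primary argument replaces this with structure: commuting with $J_{N}$ is equivalent to commuting with the complex structure $(2J_{N}+\mathbbm{1})/\sqrt{N}$, hence to $\CC$-linearity; membership in $Sp(n)$ is invariance of the standard skew form, which in the basis $e_{1},\dots,e_{n},He_{1}\alpha,\dots,He_{n}\alpha$ equals $(U^{\dagger}HV-V^{\dagger}HU)/\sqrt{-N}=\tfrac{2}{\sqrt{N}}\Im(U^{\dagger}HV)$; and the identity $\Re H(u,v)=-\Im H(iu,v)$ (the sign is right for the convention conjugate-linear in the first slot) recovers $H$ from $\Im H$ together with $\CC$-linearity. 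Combined with your correct bookkeeping of the $\sigma$-twist via $\mathrm{im}\,\rho=\sigma(Z)$ and $\sigma^{2}=\mathrm{id}$, this closes both inclusions. What each approach buys: the paper's computation produces the explicit block constraints as a by-product, and these are reused immediately afterwards (e.g.\ in computing the stabilizer of $\Omega_{0}$); your argument exhibits the statement as the classical fact $U(H)=Sp(\Im H)\cap GL(n;\CC)$ transported along realification, so it explains \emph{why} the proposition holds and transfers verbatim to other CM-orders and choices of basis without redoing any algebra. The one ingredient you use without verification is that $\iota$ really is the matrix of the tautological $\CC$-linear action in the stated $\ZZ$-basis (equivalently, $\alpha e_{j}=\sum_{k}H_{kj}\,He_{k}\alpha$, using $H^{2}=\mathbbm{1}$); this is implicit in the paper's construction of the embedding, so the dependence is legitimate, but it is precisely what your ``reading off the blocks'' step consumes, and a one-line check there would make the argument fully self-contained.
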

\begin{proof} Consider $(\begin{smallmatrix}A&B\\C&D\end{smallmatrix})\in Sp(n)$ and compute
\begin{align*}
\begin{pmatrix}A&-B\\-C&D\end{pmatrix}\begin{pmatrix}&-\textstyle{\frac{N+1}{4}}H\\H&-\mathbbm{1}\end{pmatrix}&=\begin{pmatrix}-BH&-\textstyle{\frac{N+1}{4}}AH+B\\DH&\textstyle{\frac{N+1}{4}}CH-D\end{pmatrix}
\\
\begin{pmatrix}&-\textstyle{\frac{N+1}{4}}H\\H&-\mathbbm{1}\end{pmatrix}\begin{pmatrix}A&-B\\-C&D\end{pmatrix}&=\begin{pmatrix}\textstyle{\frac{N+1}{4}}HC&-\textstyle{\frac{N+1}{4}}HD\\HA+C&-HB-D\end{pmatrix}
\end{align*}
Thus, $B=-\textstyle{\frac{N+1}{4}}HCH$, $D=HAH+CH$. Now use the inversion formula
\[
\begin{pmatrix}A&-\textstyle{\frac{N+1}{4}}HCH\\C&HAH+CH\end{pmatrix}^{-1}=\begin{pmatrix}HA^{tr}H+HC^{tr}&\textstyle{\frac{N+1}{4}}HC^{tr}H\\-C^{tr}&A^{tr}\end{pmatrix}
\]
and put $C=-HY$ to recover the defining equations of $U(H)$,
\[
H=X^{tr}HX+\textstyle{\frac{N+1}{4}}Y^{tr}HY-Y^{tr}HX+\alpha(X^{tr}HY-Y^{tr}HX).\qedhere
\]
\end{proof}
Next, observe that the stabilizer of
\begin{equation}\label{the base point}
\Omega_{0}=\frac{1}{2}(-H+\sqrt{-N}\mathbbm{1})\in\mathbb{S}_{n}
\end{equation}
in $\rho (U(H))$ is the image  of the subgroup fixed by the involutive automorphism
\[
U(H)\to U(H),\qquad g\mapsto HgH,
\] under $\rho$. To see this, write $\left(\begin{smallmatrix}A&B\\C&D\end{smallmatrix}\right)$ with $B=-\frac{N+1}{4}HCH$, $D=HAH+CH$, then the stability requirement becomes $C=HCH$ and $A=HAH$, i.e.\  $g=HgH$, for $g\in U(H)$.

\bigskip

Since
\[
(\mathbbm{1},\Omega_{0})J_{N}=(\Omega_{0}H,-{\textstyle{\frac{N+1}{4}}}H-\Omega_{0})
\]
and $H\in GL(n;\ZZ)$, the $\ZZ$--lattice in $\CC^{n}$ determined by the span of the columns of $(\mathbbm{1},\Omega_{0})$ is an $\mathcal{O}$--module. Moreover, $$\Omega_{0}H\Omega_{0}=-{\textstyle{\frac{N+1}{4}}}H-\Omega_{0}.$$ Thus, the endomorphism $\CC^{n}\to\CC^{n}$ given by left multiplication by $\Omega_{0}H$ descends to an endomorphism of the abelian variety determined by $\Omega_{0}\in \mathbb{S}_{n}$.

\bigskip

This remains true for any lattice in the $\rho (U(H))$--orbit of $(\mathbbm{1},\Omega_{0})$: we have
\[
(\mathbbm{1},\Omega_{0})\cdot J_{N}=\Omega_{0}H\cdot(1,\Omega_{0})
\]
and
\[
(\mathbbm{1},\Omega_{0})\begin{pmatrix}D^{tr}&B^{tr}\\C^{tr}&A^{tr}\end{pmatrix}=(C\Omega_{0}+D)^{tr}(\mathbbm{1},\tilde{\Omega}),
\]
where $\tilde{\Omega}=(A\Omega_{0}+B)(C\Omega_{0}+D)^{-1}$. Thus, for $\left(\begin{smallmatrix}A&B\\C&D\end{smallmatrix}\right)\in\rho U(H)$ we have
\[
(\mathbbm{1},\tilde{\Omega})\cdot J_{N}=(((C\Omega_{0}+D)^{tr})^{-1}\Omega_{0}H(C\Omega_{0}+D)^{tr})\cdot(\mathbbm{1},\tilde{\Omega}),
\]
and, using the defining equations for $U(H)$, we also have
\[
\Omega_{0}H(C\Omega_{0}+D)^{tr}=(B^{tr}+\Omega_{0}A^{tr})H,
\]
i.e.\ $((C\Omega_{0}+D)^{tr})^{-1}\Omega_{0}H(C\Omega_{0}+D)^{tr}=((C\Omega_{0}+D)^{tr})^{-1}(B^{tr}+\Omega_{0}A^{tr})H=\tilde{\Omega}H$.

\bigskip

Summarizing, the abelian varieties in the image of $U(H)/K\to \mathbb{S}_{r+s}$ have an $\mathcal{O}$-module structure, and the base point can be identified with \eqref{the base point}.

%
%
%
%
%
%%%%%%%%%
\subsection{Signature $(n-1,1)$}\label{SectionSignature n-1,1}
%%%%%%%%%
%%
%
%
%
%
%%%%%%%%
\subsubsection*{The complex hyperbolic spaces}	
%%%%%%%%
%
%
%
%

For definiteness, let us fix the matrix
\[
H_{n}=\begin{pmatrix}&&1\\&\mathbbm{1}_{n-2}&\\1&&\end{pmatrix}
\]
of signature $(n-1,1)$. The group $U(H_{n})$ acts on the complex projective space $\CC P^{n-1}$ (from the left) and preserves the open set consisting of lines in $\CC^{n}$ on which $H_{n}$ is negative definite, i.e.\ a model of complex hyperbolic space,
\[
\CC H^{n-1}\cong\{[Z]\in\CC P^{n-1}: Z^{\dagger}H_{n}Z<0\}.
\] 
Restricting attention to representatives with last component equal to one, $$[Z]=[\left(\begin{smallmatrix}z_{0}\\z_{1}\\1\end{smallmatrix}\right)],$$ we obtain a description as a left half-space,
\[
\CC H^{n-1}\cong \mathbb{L}_{n-1}=\{\left(\begin{smallmatrix}z_{0}\\z_{1}\end{smallmatrix}\right)\in\CC\times\CC^{n-2}:z_{0}+z_{0}^{\dagger}+z_{1}^{\dagger}z_{1}<0\};
\]
in this realization, the base point (i.e.\  the choice of maximal compact subgroup $U(H_{n})\cap U(n)\subset U(H_{n})$) corresponds to $\left(\begin{smallmatrix}-1\\0\end{smallmatrix}\right)\in\mathbb{L}_{n-1}$.

\bigskip

The group $U(H_{n};\mathcal{O})$ acts on this space in a fractional linear fashion.

\begin{rmk} The space $\mathbb{L}_{n-1}/U(H_{n};\mathcal{O})$ is not compact; its cusps can be identified with the $U(H_{n};\mathcal{O})$--equivalence classes of $H_{n}$--isotropic lines in $\mathbb{K}^{n}$, which in turn can be identified with the isometry classes of positive-definite unimodular hermitian lattices of rank $n-2$.
\end{rmk}

\bigskip

Inserting a column/row with only entry $1$ in penultimate position leads to an embedding $$ U(H_{n-1}) \to U(H_n)$$ of unitary groups, exemplarily presented for $n=3$ by 
$$ \begin{pmatrix}a&b\\c&d\end{pmatrix} \mapsto \begin{pmatrix}a&&b\\&1&\\c&&d\end{pmatrix}.$$
This in turn induces an embedding 
\[
\mathbb{L}_{n-2} \to \mathbb{L}_{n-1},\quad \left(\begin{smallmatrix}z_{0}\\z_{1}\end{smallmatrix}\right) \mapsto \left(\begin{smallmatrix}z_{0}\\z_{1}\\0\end{smallmatrix}\right),
\]
of complex hyperbolic spaces.

%
%
%
%
%%%%%%%%
\subsubsection*{Induced standard embedding}
%%%%%%%%
%
%
%
In this section we consider the embedding on the level of spaces induced by the standard embedding on the level of groups.

\bigskip

For $\KK=\QQ(\sqrt{-N})$ as above, put $\alpha=(-1+\sqrt{-N})/2$ (so that $\alpha^{2}+\alpha+(N+1)/4=0$). We recall our standard embedding
\[
\rho\colon U(H_{n})\to Sp(n),\ X+\alpha Y\mapsto\begin{pmatrix}X&\textstyle{\frac{N+1}{4}}YH_{n}\\-H_{n}Y&H_{n}(X-Y)H_{n}\end{pmatrix}.
\]

\begin{prop} The standard embedding $\rho\colon U(H_{3})\to Sp(3)$ induces the embedding
\[
\rho_{*}\colon\mathbb{L}_{2}\to\mathbb{S}_{3},\quad\begin{pmatrix}z_{0}\\z_{1}\end{pmatrix}\mapsto\frac{1}{2z_{0}+z_{1}^{2}}\begin{pmatrix}-\sqrt{-N}z_{0}^{2}&-\sqrt{-N}z_{0}z_{1}&-z_{0}+\alpha z_{1}^{2}\\-\sqrt{-N}z_{1}z_{0}&2\alpha z_{0}+\bar\alpha z_{1}^{2}&-\sqrt{-N}z_{1}\\-z_{0}+\alpha z_{1}^{2}&-\sqrt{-N}z_{1}&-\sqrt{-N}\end{pmatrix}
\]
\end{prop}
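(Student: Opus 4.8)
The plan is to pin down $\rho_*$ abstractly as the morphism of symmetric spaces induced by $\rho$, and only then evaluate it. Recall that $U(H_3)$ acts transitively on $\mathbb{L}_2\cong U(H_3)/K$ through the projective (fractional linear) action, where $K=U(H_3)\cap U(3)$ is the maximal compact subgroup and stabilizes the base point $o=\left(\begin{smallmatrix}-1\\0\end{smallmatrix}\right)$. By the stabilizer computation carried out above, $\rho(K)$ is precisely the stabilizer of $\Omega_0=\tfrac12(-H_3+\sqrt{-N}\,\mathbbm{1})$ inside $\rho(U(H_3))$, where $Sp(3)$ acts on $\mathbb{S}_3$ by $\left(\begin{smallmatrix}A&B\\C&D\end{smallmatrix}\right)\cdot\Omega=(A\Omega+B)(C\Omega+D)^{-1}$. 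Hence the assignment $\rho_*(g\cdot o):=\rho(g)\cdot\Omega_0$ is well defined and $U(H_3)$--equivariant, which is exactly what it means for $\rho$ to induce $\rho_*$. As a first consistency check, evaluating the claimed right-hand side at $(z_0,z_1)=(-1,0)$ returns $\Omega_0$, using $2\alpha=-1+\sqrt{-N}$.

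To obtain the closed form it suffices, by $K$--invariance, to evaluate on one convenient representative per point, which we take in the $AN$--part of an Iwasawa decomposition of $U(H_3)$. Concretely, $U(H_3)$ contains the Heisenberg translations
\[
N(a,b)=\begin{pmatrix}1&a&b\\0&1&-\bar a\\0&0&1\end{pmatrix},\qquad 2\,\Re(b)=-|a|^2,
\]
together with the dilations $\diag(\mu,1,\bar\mu^{-1})$, $\mu\in\RR_{>0}$. A short computation shows that
\[
g_Z:=\diag(\sqrt r,1,1/\sqrt r)\,N(a,b),\qquad a=-\bar z_1/\sqrt r,\quad b=1+z_0/r,\quad r=-\tfrac12\big(z_0+\bar z_0+|z_1|^2\big),
\]
sends $o$ to $\left(\begin{smallmatrix}z_0\\z_1\end{smallmatrix}\right)$; note that $r>0$ is precisely the defining inequality of $\mathbb{L}_2$. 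Consequently
\[
\rho_*\begin{pmatrix}z_0\\z_1\end{pmatrix}=\rho(g_Z)\cdot\Omega_0=(A\Omega_0+B)(C\Omega_0+D)^{-1},\qquad\text{where }\rho(g_Z)=\begin{pmatrix}A&B\\C&D\end{pmatrix}.
\]

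What remains is mechanical: decompose $g_Z=X+\alpha Y$ into its real matrices $X,Y$, read off $\rho(g_Z)$ from the standard embedding formula, and evaluate the Siegel transformation. Membership of the result in $\mathbb{S}_3$ --- both symmetry and positive-definiteness of the imaginary part --- is automatic, since $\rho(g_Z)\in Sp(3)$ preserves $\mathbb{S}_3$ and $\Omega_0\in\mathbb{S}_3$, so one only has to match entries. I expect the main obstacle to be exactly this matching: one must invert the $3\times3$ block $C\Omega_0+D$, whose entries carry the factor $\sqrt r$ and the antiholomorphic data $\bar z_0,\bar z_1$, and then collapse the product to the stated expression. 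The simplification rests on repeated use of $\alpha^2+\alpha+\tfrac{N+1}{4}=0$ together with $2r=-(z_0+\bar z_0+|z_1|^2)$; the non-obvious feature, reflecting that the period map $\rho_*$ is holomorphic, is that all antiholomorphic contributions cancel and the real factor $r$ recombines into the holomorphic denominator $2z_0+z_1^2$.
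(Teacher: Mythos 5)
Your proposal is correct, and at its core it runs on the same engine as the paper's proof: exploit transitivity of $U(H_3)$ on $\mathbb{L}_2$, pick explicit elements of the parabolic ($AN$) part carrying the base point to a general point, and evaluate the symplectic fractional linear action of their $\rho$--images on $\Omega_0=\tfrac12(-H_3+\sqrt{-N}\,\mathbbm{1})$. The difference is in the staging. The paper factors the computation in two steps: first the slice $z_1=0$, handled inside $U(H_2)$ via the $2\times2$ matrices $\left(\begin{smallmatrix}a&\sqrt{-N}b/a\\0&1/a\end{smallmatrix}\right)$ (dilation plus vertical translation), where the Siegel action on the base point is checked by hand; then a single Heisenberg translation with \emph{real} parameter $b=-\tfrac12|z_1|^2$ to reach a general point. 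You instead take one combined representative $g_Z=\diag(\sqrt r,1,1/\sqrt r)\,N(a,b)$ with complex $b$; your data $a=-\bar z_1/\sqrt r$, $b=1+z_0/r$, $r=-\tfrac12(z_0+\bar z_0+|z_1|^2)$ are all correct, as are the membership condition $2\Re(b)=-|a|^2$ and the identification of $r>0$ with the defining inequality of $\mathbb{L}_2$. Your packaging buys two things the paper leaves implicit: an explicit well-definedness argument for $\rho_*$ (via the paper's stabilizer computation, whose fixed set $\{g=H_3gH_3\}$ is exactly $K=U(H_3)\cap U(3)$), and the observation that symmetry and positive-definiteness of the image come for free since $\rho(g_Z)\in Sp(3)$ preserves $\mathbb{S}_3$. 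The price is that your single terminal computation --- inverting the full $3\times3$ block $C\Omega_0+D$ with $\sqrt r$, $\bar z_0$, $\bar z_1$ present and watching the antiholomorphic terms cancel into the denominator $2z_0+z_1^2$ --- is heavier than either of the paper's two smaller steps. Like the paper (which also leaves the Heisenberg step to the reader), you assert rather than execute this final matching; since the setup is fully specified and your base-point consistency check at $(z_0,z_1)=(-1,0)$ is right, this is a finite verification rather than a gap, but it is the one place where the paper's two-stage decomposition is genuinely more economical.
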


\begin{proof}
We start proving the special case $\mathbb{L}_1 \to \mathbb{S}_3$. Herefore, note that $U(H_{2})$ acts transitively on $\mathbb{L}_{1}$. Writing $z_{0}=-a^{2}+\sqrt{-N}b$, for real $a$ and $b$, observe that
\[
\begin{pmatrix}1&\sqrt{-N}b\\&1\end{pmatrix}\begin{pmatrix}-a^{2}\\1\end{pmatrix}=\begin{pmatrix}-a^{2}+\sqrt{-N}b\\1\end{pmatrix},
\]
and, as $\sqrt{-N}=2\alpha+1$,
\[
\rho\colon\begin{pmatrix}a&\sqrt{-N}b/a\\0&1/a\end{pmatrix}\mapsto\begin{pmatrix}a&b/a&\textstyle{\frac{N+1}{4}}2b/a&0\\0&1/a&0&0\\0&0&1/a&0\\0&-2b/a&-b/a&a\end{pmatrix}
\]
Now, consider the fractional linear action on the basepoint
\[
\frac{1}{2}(-H_{2}+\sqrt{-N}\mathbbm{1})=\frac{1}{2}\begin{pmatrix}\sqrt{-N}&-1\\-1&\sqrt{-N}\end{pmatrix}\in\mathbb{S}_{2};
\]
this yields
\[
\frac{1}{2}\begin{pmatrix}a\sqrt{-N}+Nb/a&-a+\sqrt{-N}b/a\\-1/a&\sqrt{-N}/a\end{pmatrix}\begin{pmatrix}1/a&\\&a-\sqrt{-N}b/a\end{pmatrix}^{-1},
\]
which indeed equals
\[
\frac{1}{2}\begin{pmatrix}\sqrt{-N}(a^{2}-\sqrt{-N}b)&-1\\-1&\frac{\sqrt{-N}}{a^{2}-\sqrt{-N}b}\end{pmatrix}.
\]
For the general case, note that we have a Heisenberg translation acting via $$ \begin{pmatrix} 1&-z_1^{tr}&-\tfrac{1}{2} |z_1|^2\\ &1&z_1\\ &&1\end{pmatrix}\begin{pmatrix} z_0' \\ 0\\ 1 \end{pmatrix} =  \begin{pmatrix} z_0' - \tfrac{1}{2}|z_1|^2 \\ z_1\\1\end{pmatrix}.$$
Thus it suffices to put $z_0 = z_0' - \tfrac{1}{2}|z_1|^2$ and check the action of the image of the Heisenberg translation under $\rho$.
\end{proof}

Working with the diagonal form $H_{n}'=\diag(1,\dots,1,-1)$, the variety corresponding to the basepoint is the product of $n$ elliptic curves, each endowed with a multiplication by $\mathcal{O}$; up to isomorphism, this variety can also be obtained using the embedding based on the form $H_{n}$.

\bigskip

\begin{prop}
The abelian three-fold corresponding to $\rho_*(\left(\begin{smallmatrix} \bar{\alpha} \\ 0 \end{smallmatrix}\right))$ is isomorphic to a product of three elliptic curves with complex multiplication; more precisely, the action of $J_{N}$ induces multiplication by $\alpha$ on two of these curves, and multiplication by $\bar\alpha$ on the third.
\end{prop}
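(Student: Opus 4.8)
The plan is to evaluate $\rho_*$ at the given point, read off the period matrix, and then split the resulting complex torus by hand. Setting $z_0=\bar\alpha$ and $z_1=0$ in the formula of the preceding proposition (so that $2z_0+z_1^2=2\bar\alpha$), one finds that $\Omega:=\rho_*\!\left(\begin{smallmatrix}\bar\alpha\\0\end{smallmatrix}\right)$ has vanishing $(1,2),(2,1),(2,3),(3,2)$ entries, with $\Omega_{22}=\alpha$. Thus the middle coordinate decouples: it contributes the factor $\CC/(\ZZ+\alpha\ZZ)=\CC/\mathcal{O}$, and there remains a two-dimensional piece $B$ supported on the coordinates $\{1,3\}$. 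This first step is purely computational.

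Next I would pin down the $\mathcal{O}$-action. By the identity $(\mathbbm{1},\tilde\Omega)J_N=\tilde\Omega H\cdot(\mathbbm{1},\tilde\Omega)$ established in the standard-embedding section, multiplication by $\alpha$ on the abelian variety attached to $\Omega$ is realized by left multiplication by $\Omega H_3$ on $\CC^3$. Computing $\Omega H_3$ (which merely interchanges the first and third columns of $\Omega$), one checks that $\alpha\cdot e_1,\alpha\cdot e_2,\alpha\cdot e_3$ are precisely the three columns of $\Omega$; hence the period lattice $\Lambda=\ZZ^3+\Omega\ZZ^3$ equals the free module $\mathcal{O} e_1\oplus\mathcal{O} e_2\oplus\mathcal{O} e_3$. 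Restricted to the block $\{1,3\}$, the matrix $M$ of the $\alpha$-action satisfies $\tr M=-1$ and $\det M=\tfrac{N+1}{4}=\alpha\bar\alpha$, so its eigenvalues are $\alpha$ and $\bar\alpha$, with eigenlines $V_\alpha$ and $V_{\bar\alpha}$.

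The heart of the argument is to split $B$ as a genuine product, not merely up to isogeny. Writing a lattice vector as $Pe_1+Qe_3$ with $P,Q\in\mathcal{O}$ and expanding in the eigenbasis, the condition to lie in $V_\alpha$ (resp. $V_{\bar\alpha}$) becomes $P+\alpha Q=0$ (resp. $P-\bar\alpha Q=0$), which is solvable over $\mathcal{O}$ because $\alpha,\bar\alpha\in\mathcal{O}$; I would take $h_1=-\alpha e_1+e_3\in V_\alpha$ and $h_2=\bar\alpha e_1+e_3\in V_{\bar\alpha}$. The key point is that the change-of-basis matrix $\left(\begin{smallmatrix}-\alpha&1\\\bar\alpha&1\end{smallmatrix}\right)$ has determinant $-(\alpha+\bar\alpha)=1$, a unit in $\mathcal{O}$, so $\{h_1,h_2\}$ is again an $\mathcal{O}$-basis of the block lattice. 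Since $\mathcal{O}$ acts as the scalar $\alpha$ on $V_\alpha$ and as $\bar\alpha$ on $V_{\bar\alpha}$, each summand $\mathcal{O} h_i$ is a full lattice in a complex line, giving $B\cong(\CC/\mathcal{O})\times(\CC/\mathcal{O})$ with $\alpha$ acting by $\alpha$ and $\bar\alpha$, respectively. Together with the middle factor this yields the product of three elliptic curves, with multiplication by $\alpha$ on two of them and by $\bar\alpha$ on the third.

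The main obstacle is precisely this genuine (rather than isogeny-level) splitting: a priori the eigenspace projections of $\Lambda$ need not be integral. What saves the day is the combination of the class-number-one hypothesis (forcing $\Lambda$ to be a free $\mathcal{O}$-module) with the arithmetic accident that $\det\left(\begin{smallmatrix}-\alpha&1\\\bar\alpha&1\end{smallmatrix}\right)=-(\alpha+\bar\alpha)=1$ is a unit. As a sanity check, and as an alternative route, one can compare with the diagonal form $H_3'=\diag(1,1,-1)$, whose base point $\Omega_0'=\diag(\alpha,\alpha,-\bar\alpha)$ manifestly gives the same product with the same distribution of multiplications by $\alpha$ and $\bar\alpha$; the uniqueness of the indefinite unimodular hermitian $\mathcal{O}$-lattice then guarantees that the two descriptions agree.
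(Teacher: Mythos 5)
Your proof is correct, but it takes a genuinely different route from the paper's. The paper reduces to signature $(1,1)$ via the embedding $\mathbb{L}_{1}\to\mathbb{L}_{2}$ and then transports everything to the diagonal form: the isometry between the unimodular hermitian lattices of $H_{2}$ and $H_{2}'=\diag(1,-1)$, realized by the matrices $\left(\begin{smallmatrix}-\bar\alpha&1\\\alpha&1\end{smallmatrix}\right)$ and $\left(\begin{smallmatrix}-\alpha&\bar\alpha\\1&1\end{smallmatrix}\right)$, identifies the ball base point $z'=0$ with $z_{0}=\bar\alpha$ and is converted into right conjugation by an explicit $\tilde g\in Sp(2;\ZZ)$ satisfying $\sigma(\tilde g)^{-1}J_{N}\sigma(\tilde g)=J_{N}'$, after which the product decomposition and the CM types are manifest. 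You instead compute at the given point directly: the block period matrix, the realization of the $J_{N}$-action by $\Omega H_{3}$ (legitimate, since $\rho_{*}\left(\begin{smallmatrix}\bar\alpha\\0\end{smallmatrix}\right)$ lies in the $\rho(U(H_{3}))$-orbit of $\Omega_{0}$ by transitivity of $U(H_{3})$ on $\mathbb{L}_{2}$), the identity $\Lambda=\mathcal{O}e_{1}\oplus\mathcal{O}e_{2}\oplus\mathcal{O}e_{3}$, and an integral eigenline splitting of the $\{1,3\}$-block. I checked the computations: $M=\left(\begin{smallmatrix}-1/2&-\sqrt{-N}\bar\alpha/2\\-\sqrt{-N}/(2\bar\alpha)&-1/2\end{smallmatrix}\right)$ indeed has trace $-1$ and determinant $(N+1)/4$; your membership conditions $P+\alpha Q=0$ and $P=\bar\alpha Q$ are right, with the subtlety (worth making explicit) that the $\mathcal{O}$-module structure acts through the \emph{conjugate} embedding on the $V_{\bar\alpha}$-component, so that $h_{1}=-\alpha\cdot e_{1}+e_{3}$, taken with the module action, is proportional to the eigenvector $(-\bar\alpha,1)$ rather than $(-\alpha,1)$ --- which is exactly as it should be; and the unit determinant $-(\alpha+\bar\alpha)=1$ does make $\{h_{1},h_{2}\}$ an $\mathcal{O}$-basis. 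Amusingly, both proofs hinge on the same arithmetic accident: your change-of-basis matrix is essentially one of the matrices the paper uses to diagonalize $H_{2}$. What the paper's route buys is structural: the identification is by an \emph{integral symplectic} base change, hence keeps track of the principal polarization and feeds into the later discussion of period matrices diagonalizable in $Sp(2;\ZZ)$ (Proposition \ref{indecomposable period}); what your route buys is a self-contained, purely lattice-theoretic argument at the specific point. Two small remarks: your appeal to class number one is unnecessary, since your explicit observation that the $\alpha\cdot e_{i}$ are the columns of $\Omega$ already gives freeness of $\Lambda$; and your closing ``sanity check'' is the paper's proof in compressed form, but uniqueness of the indefinite unimodular lattice alone does not identify \emph{which} point of $\mathbb{L}_{2}$ corresponds to the diagonal base point $\diag(\alpha,\alpha,-\bar\alpha)$ --- pinning that down to $z_{0}=\bar\alpha$ is precisely the paper's computation with the map $\beta$ and the ball $\mathbb{B}_{1}$, so that remark cannot stand alone, though your main argument never relies on it.
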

\begin{proof}
In view of the embedding $\mathbb{L}_{1}\to\mathbb{L}_{2}$, it suffices to prove the analogous result for signature $(1,1)$: In this case, we have
\[
\begin{pmatrix}-\bar\alpha&1\\\alpha&1\end{pmatrix}\begin{pmatrix}&1\\1&\end{pmatrix}\begin{pmatrix}-\alpha&\bar\alpha\\1&1\end{pmatrix}=\begin{pmatrix}1&\\&-1\end{pmatrix}.
\]
Therefore we have a map
\[
\beta\colon U(H_{2})\to U(1,1),\quad  g \mapsto \begin{pmatrix}1&-\bar\alpha\\-1&-\alpha\end{pmatrix}g\begin{pmatrix}-\alpha&\bar\alpha\\1&1\end{pmatrix}=\hat g,
\]
and an induced map $$\mathbb{L}_{1}\to \mathbb{B}_{1}=\{z'\in\CC:|z'|^{2}<1\}.$$ Under the inverse mapping, the base point of the ball gets mapped to
\[
z'=0\mapsto z_{0}=\bar\alpha.
\]
The unimodular $\mathcal{O}$--lattices defined by $H_{2}$ and $H_{2}'=\diag(1,-1)$ are isometric; therefore, their respective standard embeddings are related by an integrally symplectic base change. More precisely, looking at the diagram
\[
\xymatrix{U(H_{2})\ar[r]^{\beta}\ar[d]^{\rho_{H_{2}^{}}}&U(H_{2}')\ar[d]^{\rho_{H_{2}'}}\\Sp(2)\ar[r]^{\hat\beta}&Sp(2)}
\]
we see that $\hat\beta\colon \rho_{H_{2}^{}}(U(H_{2}))\to\rho_{H_{2}'}(U(H_{2}'))$ is represented by {\em right} conjugation by the symplectic matrix
\[
\tilde{g}=\left(\begin{smallmatrix}1&0& &\\0&1&&\\&&0&-1\\&&-1&0\end{smallmatrix}\right)\left(\begin{smallmatrix}0&-1&\frac{N+1}{4}&-\frac{N+1}{4}\\1&1&0&0\\-1&-1&1&0\\0&0&1&1\end{smallmatrix}\right)\left(\begin{smallmatrix}1&0&&\\0&1&&\\&&-1&0\\&&0&1\end{smallmatrix}\right)=\left(\begin{smallmatrix}0&-1&-\frac{N+1}{4}&\frac{N+1}{4}\\1&1&0&0\\0&0&1&-1\\1&1&1&0\end{smallmatrix}\right),
\]
and we have $\sigma(\tilde{g})^{-1}J_{N}\sigma({\tilde{g}})=\left(\begin{smallmatrix}0&-\tfrac{N+1}{4}H_{2}'\\H_{2}'&-\mathbbm{1}\end{smallmatrix}\right)=J_{N}'$, which proves the claim.
\end{proof}

\subsubsection*{A twisted embedding}		\label{SectionTwistedEmbedding}
We define a twisted embedding $\iota\colon U(H_n) \to Sp(n)$ on the level of groups, exemplarily the case of $n=3$, by
\[
\iota(g)=t\rho(g)t^{-1}
\]
where
\[
t=\begin{pmatrix}1&&&0&&\\&1&&&0&\\&&0&&&-1\\0&&&1&&\\ &0&&&1& \\&&1&&&0\end{pmatrix}\in Sp(3;\ZZ);
\]
is the twisting matrix. Direct computation shows:

\begin{prop} The induced twisted embedding $\mathbb{L}_{2}\to\mathbb{S}_{3}$ is given by
\[
\iota_{*}\colon\begin{pmatrix}z_{0}\\z_{1}\end{pmatrix}\mapsto\begin{pmatrix}\frac{\frac{N+1}{2}z_{0}+\alpha^{2}z_{1}^{2}}{\sqrt{-N}}&-\alpha z_{1}&\frac{z_{0}-\alpha z_{1}^{2}}{\sqrt{-N}}\\-\alpha z_{1}&\alpha&z_{1}\\\frac{z_{0}-\alpha z_{1}^{2}}{\sqrt{-N}}&z_{1}&\frac{2z_{0}+z_{1}^{2}}{\sqrt{-N}}\end{pmatrix}.
\]\qed
\end{prop}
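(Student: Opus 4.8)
The plan is to exploit the definition $\iota=t\rho t^{-1}$ so as to reduce the claim to the preceding proposition together with a single fractional linear transformation. Conjugating the group embedding by $t$ conjugates the stabilizer of the base point and therefore translates the whole image by the fractional linear action of $t$: for every $g\in U(H_3)$ the left-action property gives $\iota(g)\cdot(t\cdot\Omega_0)=t\cdot(\rho(g)\cdot\Omega_0)$, where $\Omega_0$ is the base point \eqref{the base point} and $\cdot$ denotes the usual action $\Omega\mapsto(A\Omega+B)(C\Omega+D)^{-1}$ on $\mathbb{S}_3$. Hence the induced map on symmetric spaces is simply $\iota_*=t\cdot\rho_*$, and it suffices to apply $t$ to the matrix $\rho_*(z_0,z_1)$ of the previous proposition.

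First I would record the block decomposition of $t$. Writing $t=\left(\begin{smallmatrix}A&B\\C&D\end{smallmatrix}\right)$ with $3\times3$ blocks, the explicit form of $t$ gives $A=D=\diag(1,1,0)$, $C=\diag(0,0,1)$ and $B=-C$; thus $t$ acts trivially on the first two coordinate directions and performs an elementary symplectic involution only in the third. For a general symmetric $\Omega=(\omega_{ij})$ the factor $C\Omega+D$ then agrees with the identity in its first two rows and has bottom row $(\omega_{31},\omega_{32},\omega_{33})$, so that its determinant is $\omega_{33}$ and its inverse is immediate. Carrying out the multiplication, $t\cdot\Omega$ collapses to the closed expression whose third row and column off-diagonal entries are $\omega_{i3}/\omega_{33}$, whose $(3,3)$-entry is $-1/\omega_{33}$, and whose upper-left $2\times2$ block is the Schur complement $\omega_{ij}-\omega_{i3}\omega_{3j}/\omega_{33}$; this is a routine $3\times3$ computation.

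Next I would substitute the entries of $\rho_*(z_0,z_1)$, abbreviating $\Delta=2z_0+z_1^2$ for the common denominator, and simplify. Since $\omega_{33}=-\sqrt{-N}/\Delta$, the third row and column reproduce $z_1$ and $(z_0-\alpha z_1^2)/\sqrt{-N}$, and the $(3,3)$-entry gives $(2z_0+z_1^2)/\sqrt{-N}$ at once. For the $(2,2)$- and $(1,2)$-entries one clears denominators and invokes the arithmetic of $\alpha$: the identities $N/\sqrt{-N}=-\sqrt{-N}$ and $\bar\alpha+\sqrt{-N}=\alpha$ (equivalently $1-\sqrt{-N}=-2\alpha$) let these entries collapse to $\alpha$ and $-\alpha z_1$ after extracting a factor $\Delta$.

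I expect the $(1,1)$-entry to be the only genuinely nontrivial step. There the Schur complement produces, over the denominator $\sqrt{-N}\,\Delta$, the numerator $(N+1)z_0^2-2\alpha z_0z_1^2+\alpha^2z_1^4$, and one must check that it factors as $\bigl(\tfrac{N+1}{2}z_0+\alpha^2z_1^2\bigr)\Delta$ so that one power of $\Delta$ cancels and leaves the asserted value $\bigl(\tfrac{N+1}{2}z_0+\alpha^2z_1^2\bigr)/\sqrt{-N}$. Expanding the candidate product, this factorization holds exactly when $\tfrac{N+1}{4}+\alpha^2+\alpha=0$, which is precisely the defining quadratic relation for $\alpha$ recorded above. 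All other entries are direct evaluations, so once this cancellation is in hand the formula for $\iota_*$ follows.
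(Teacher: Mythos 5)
Your proposal is correct and takes essentially the approach the paper intends: the paper leaves the proof as a ``direct computation,'' namely applying the fractional linear action of $t$ to the image of $\rho_*$ from the preceding proposition, which is exactly what you carry out via the block decomposition $A=D=\diag(1,1,0)$, $C=-B=\diag(0,0,1)$. All your entry-by-entry simplifications check out, including the two identities $\bar\alpha+\sqrt{-N}=\alpha$ and $1-\sqrt{-N}=-2\alpha$ for the $(2,2)$- and $(1,2)$-entries and the key factorization of the $(1,1)$-numerator, which indeed reduces to the defining relation $\alpha^{2}+\alpha+\tfrac{N+1}{4}=0$.
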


The twisted endomorphism becomes
\[
\sigma(t)J_{N}\sigma(t^{-1})=\begin{pmatrix}0&&-\frac{N+1}{4}&0&&0\\&0&&&-\frac{N+1}{4}&\\1&&-1&0&&0\\0&&0&-1&&-1\\&1&&&-1&\\0&&0&\frac{N+1}{4}&&0\end{pmatrix};
\]
consequently, multiplication by $\alpha$ is represented by
\[
\mu_{\alpha}=\begin{pmatrix}0&-\alpha z_{1}&-\frac{N+1}{4}\\0&\alpha&0\\1&z_{1}&-1\end{pmatrix},
\]
which indeed satisfies $\mu_{\alpha}+\mu_{\alpha}^{2}=-\frac{N+1}{4}$, and $\mu_{\alpha}\left(\begin{smallmatrix}-\alpha\\0\\1\end{smallmatrix}\right)=\bar\alpha\left(\begin{smallmatrix}-\alpha\\0\\1\end{smallmatrix}\right)$. Furthermore, we clearly have $\mu_{\alpha}\left(\begin{smallmatrix}1+\alpha\\0\\1\end{smallmatrix}\right)=\alpha\left(\begin{smallmatrix}1+\alpha\\0\\1\end{smallmatrix}\right)$ and $\mu_{\alpha}\left(\begin{smallmatrix}0\\1\\0\end{smallmatrix}\right)=\alpha\left(\begin{smallmatrix}0\\1\\0\end{smallmatrix}\right)$.
\bigskip

There is a left action of the unitary group $U(H_3)$ on the space $\mathbb{L}_2\times \mathbb{C}$, which for $g = \left( g_{kl} \right)_{kl} \in U(H_3)$ is given by
$$ g.((\begin{smallmatrix} z_0\\z_1\end{smallmatrix}), w) = (g. (\begin{smallmatrix}z_0\\z_1\end{smallmatrix}), (g_{31}z_0 + g_{32}z_1 + g_{33})^{-1}w).$$ There is also an induced action of $U(H_3)$ on the space $\mathbb{S}_3\times \mathbb{C}^3$ via the map $\iota$, which is the restriction of the action \eqref{almost a bundle of Lie algebras}.
\begin{prop} The map
\begin{align*}
\mathbb{L}_2 \times \mathbb{C} &\to \mathbb{S}_3 \times \mathbb{C}^3, \\
((\begin{smallmatrix} z_0 \\ z_1\end{smallmatrix}), w) &\mapsto \left( \iota_{*} \left(\begin{smallmatrix}z_{0}\\z_{1}\end{smallmatrix}\right), \left( \begin{smallmatrix} -\alpha \\ 0 \\1\end{smallmatrix}\right)\cdot w\right)
\end{align*}
is $U(H_3)$-equivariant. Thus, in particular, the line spanned by $E_{\bar\alpha}=\left(\begin{smallmatrix}-\alpha\\0\\1\end{smallmatrix}\right)$ is invariant. 
\end{prop}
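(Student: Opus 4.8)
The plan is to separate the two factors and reduce the whole proposition to a single eigenvalue identity. Fix $g\in U(H_3)$, write $\iota(g)=\left(\begin{smallmatrix}A&B\\C&D\end{smallmatrix}\right)\in Sp(3)$ and put $\Omega=\iota_*\left(\begin{smallmatrix}z_0\\z_1\end{smallmatrix}\right)$. On the first factor there is nothing to check: by construction $\iota_*$ is the map on symmetric spaces induced by the group homomorphism $\iota$, so $\iota_*\big(g\cdot\left(\begin{smallmatrix}z_0\\z_1\end{smallmatrix}\right)\big)=\iota(g)\cdot\Omega$ for the fractional-linear actions. Comparing the two sides of the asserted equivariance and inserting the definitions of the two actions (the $\mathbb{C}$-action with automorphy factor $(g_{31}z_0+g_{32}z_1+g_{33})^{-1}$ on the left, and the restriction of \eqref{almost a bundle of Lie algebras} on the right), one sees that the statement is equivalent to
\[
((C\Omega+D)^{tr})^{-1}E_{\bar\alpha}=j(g,z)^{-1}E_{\bar\alpha},\qquad j(g,z):=g_{31}z_0+g_{32}z_1+g_{33},
\]
i.e.\ to the claim that $E_{\bar\alpha}$ is an eigenvector of the tangent map $T_g:=((C\Omega+D)^{tr})^{-1}$ with eigenvalue $j(g,z)^{-1}$.

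First I would show that $\mathbb{C}E_{\bar\alpha}$ is $T_g$-invariant. The conceptual reason is that $U(H_3)$ acts by $\mathcal{O}$-linear isomorphisms of the associated abelian three-folds: the Proposition characterizes $\rho(U(H))$ as the $\sigma$-twisted centralizer $\{h:\sigma(h)J_N=J_N\sigma(h)\}$ of $J_N$, and conjugating this relation by $t$ (using that $\sigma$ is an automorphism) shows that $\iota(g)$ is compatible with the twisted endomorphism $J'=\sigma(t)J_N\sigma(t^{-1})$. Passing to tangent spaces at the origin via Remark~\ref{relative tangent bundle}, the induced map $T_g$ intertwines the multiplication-by-$\alpha$ operators, $T_g\,\mu_\alpha(\Omega)=\mu_\alpha(\tilde\Omega)\,T_g$ with $\tilde\Omega=\iota(g)\cdot\Omega$. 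Since $E_{\bar\alpha}$ spans the one-dimensional $\bar\alpha$-eigenspace of $\mu_\alpha$ — the relation $\mu_\alpha E_{\bar\alpha}=\bar\alpha E_{\bar\alpha}$ being independent of the base point — the map $T_g$ must carry this eigenline to itself. Writing $T_gE_{\bar\alpha}=c(g,z)E_{\bar\alpha}$ already establishes the final assertion that the line spanned by $E_{\bar\alpha}$ is invariant.

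It then remains to identify the scalar as $c(g,z)=j(g,z)^{-1}$. Both $c$ and $j^{-1}$ are $1$-cocycles for the $U(H_3)$-action on $\mathbb{L}_2$: $c$ because $T$ defines a left action (so $T_{g_1g_2}=T_{g_1}(g_2\cdot\,)\,T_{g_2}$), which restricts to the invariant line; and $j^{-1}$ because $j$ is the standard bottom-row automorphy factor of a projective action. Hence it suffices to verify $c=j^{-1}$ on a generating set of $U(H_3)$ and propagate by the cocycle relation. I would take the Heisenberg translations (for which $j\equiv 1$), the diagonal torus $\diag(a,b,\bar a^{-1})$ with $|b|=1$ (for which $j=\bar a^{-1}$), and the Weyl element $H_3$ (for which $j=z_0$); these generate $U(2,1)$ by the Bruhat decomposition. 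For each generator $\iota(g)$, and hence $(C\Omega+D)^{tr}E_{\bar\alpha}$, is completely explicit, so each comparison is a short direct computation.

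The main obstacle is precisely this scalar bookkeeping in the last step: the invariance of the line is essentially formal once the intertwining with $\mu_\alpha$ is in place, but pinning the eigenvalue down to be exactly $j(g,z)^{-1}$ (and not merely some cocycle) is where the real work lies. One can either run the reduction to the three explicit generators above, or evaluate $(C\Omega+D)^{tr}E_{\bar\alpha}$ in general via $\iota(g)=t\rho(g)t^{-1}$ together with the orbit identities of the ``Standard embedding'' section; the latter is feasible but bookkeeping-heavy, so the generator route is what I would carry out. A secondary point to handle with care is that $\mu_\alpha$ depends on the base point, so one must genuinely use the intertwining $T_g\,\mu_\alpha(\Omega)=\mu_\alpha(\tilde\Omega)\,T_g$ and the base-point independence of $E_{\bar\alpha}$, rather than a literal commutation, to conclude invariance of the eigenline.
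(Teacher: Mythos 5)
Your proposal is correct, but it takes a genuinely different route from the paper. The paper reduces to the same key identity $(C\Omega+D)^{tr}E_{\bar\alpha}=(g_{31}z_0+g_{32}z_1+g_{33})E_{\bar\alpha}$ and then settles it for \emph{arbitrary} $g\in U(H_3)$ by one direct matrix computation: writing $\iota(g)=t\left(\begin{smallmatrix}\mathbbm{1}&\\&H_3\end{smallmatrix}\right)\hat g\left(\begin{smallmatrix}\mathbbm{1}&\\&H_3\end{smallmatrix}\right)t^{-1}$ with $\hat g=\left(\begin{smallmatrix}X&\frac{N+1}{4}Y\\-Y&X-Y\end{smallmatrix}\right)$, it evaluates the row vector $\begin{pmatrix}0&E_{\bar\alpha}^{tr}\end{pmatrix}\iota(g)$ explicitly as $(g_{31},\,g_{32},\,\alpha g_{31},\,-\alpha g_{33},\,-\alpha g_{32},\,g_{33})$ and multiplies by $\left(\begin{smallmatrix}\Omega\\\mathbbm{1}\end{smallmatrix}\right)$ --- this is precisely the ``bookkeeping-heavy'' general route you mention and decline. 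Your alternative splits the problem in two: line-invariance via the intertwining $T_g\,\mu_\alpha(\Omega)=\mu_\alpha(\tilde\Omega)\,T_g$ together with the one-dimensionality of the $\bar\alpha$-eigenspace (correct: $\mu_\alpha$ has eigenvalue $\alpha$ with multiplicity two and $\bar\alpha$ with multiplicity one, and $\mu_\alpha E_{\bar\alpha}=\bar\alpha E_{\bar\alpha}$ holds at every base point), then scalar identification via the cocycle property of both $c$ and $j^{-1}$ checked on Bruhat generators. This buys conceptual clarity --- it explains \emph{why} the line is invariant (CM-linearity of the isogenies), which the paper's computation leaves implicit --- at the cost of two small points you should make explicit when writing it up: first, the paper derives the identity $(\mathbbm{1},\tilde\Omega)J_N=\tilde\Omega H\,(\mathbbm{1},\tilde\Omega)$ only on the orbit of the base point $\Omega_0$, so your intertwining at an arbitrary $\Omega=\iota_*\left(\begin{smallmatrix}z_0\\z_1\end{smallmatrix}\right)$ requires invoking transitivity of $U(H_3)$ on $\mathbb{L}_2$ (true, and consistent with the Heisenberg-translation argument in the paper's proof of the $\rho_*$ formula); second, for the Bruhat generation $G=\langle MAN, w\rangle$ you need the \emph{full} Heisenberg group $N$, including its central one-parameter subgroup $\left(\begin{smallmatrix}1&&it\\&1&\\&&1\end{smallmatrix}\right)$, $t\in\RR$, whereas the paper's displayed Heisenberg translation only exhibits the non-central part; since the central elements also have $j\equiv1$, this adds one more (easy) generator computation rather than an obstruction. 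With those caveats and the three generator checks actually carried out, your argument is complete.
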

\begin{proof}
Let us sketch the direct computation of the claim. We have to show 
\begin{align}	\label{Equivariance}
\left((C\Omega + D)^{tr} \right)^{-1}\cdot E_{\bar{\alpha}} &= (g_{31}z_0 + g_{32}z_1 + g_{33})^{-1} \cdot E_{\bar{\alpha}} \nonumber \\
\Leftrightarrow \qquad (C\Omega + D)^{tr} \cdot E_{\bar{\alpha}} &= (g_{31}z_0 + g_{32}z_1 + g_{33}) \cdot E_{\bar{\alpha}}.
\end{align}
Having 
\[
g=X+\alpha Y\mapsto t\left(\begin{smallmatrix} X&\frac{N+1}{4}YH_{3}\\-H_{3}Y&H_{3}(X-Y)H_{3}\end{smallmatrix}\right)t^{-1}=\left(\begin{smallmatrix}A&B\\C&D\end{smallmatrix}\right),
\]
it is convenient to put $\hat g=\left(\begin{smallmatrix}X&\frac{N+1}{4}Y\\-Y&X-Y\end{smallmatrix}\right)$ in order to identify the transpose of the left hand side of \eqref{Equivariance} with 
\begin{align*}
\begin{pmatrix}0&E_{\bar\alpha}^{tr}\end{pmatrix}\begin{pmatrix}A&B\\C&D\end{pmatrix}\begin{pmatrix}\Omega\\\mathbbm{1}\end{pmatrix} &=
\begin{pmatrix}0&E_{\bar\alpha}^{tr}\end{pmatrix} t \begin{pmatrix}\mathbbm{1}&\\&H_{3}\end{pmatrix} \hat{g}  \begin{pmatrix}\mathbbm{1}&\\&H_{3}\end{pmatrix}  t^{-1} \begin{pmatrix}\Omega\\\mathbbm{1}\end{pmatrix}.
\end{align*}
Now compute
\[
\begin{pmatrix}0&E_{\bar\alpha}^{tr}\end{pmatrix} t \begin{pmatrix}\mathbbm{1}&\\&H_{3}\end{pmatrix} \hat{g}  \begin{pmatrix}\mathbbm{1}&\\&H_{3}\end{pmatrix}  t^{-1}= (g_{31},\: g_{32},\: \alpha g_{31},\: -\alpha g_{33} ,\: -\alpha g_{32} ,\: g_{33}),
\]
and multiply by $\left(\begin{smallmatrix}\Omega\\\mathbbm{1}\end{smallmatrix}\right)$ to find the claim. It is useful to remember the equality $-\alpha^2 = \alpha + \tfrac{N+1}{4}$ at some point.
\end{proof}
\begin{rmk} The previous proposition shows that the relative tangent bundle of the family of abelian varieties over $\mathbb{L}_{2}$ splits off a line bundle. Dualizing, the same is true for the relative cotangent bundle. We remark that the dual vector to $E_{\bar\alpha}$ is given by $-\tfrac{1}{\sqrt{-N}}\left(\begin{smallmatrix}1\\0\\-1-\alpha\end{smallmatrix}\right)$.
\end{rmk}

%
%
%
%
%
%
%
%
%
%
%
%
%
%
%
%
%
%
%
%%%%%%
\section{TAF for $U(1,1;\mathbb{Z}[\zeta_3])$}
%%%%%%
%
%
%
%
%
%
%
%
%%%%%%
\subsection{Geometry of the Complex Points}\label{Geometry of the Complex Points}
%%%%%%
%
%
%

%
%
%
%
%%%%%%%%
\subsubsection*{The Group $U(H_{2}; \mathbb{Z}[\zeta_3])$ }\label{group decomposition in (1,1)}
%%%%%%%%
%
%
%
%
Recall that there is a unique unimodular hermitian lattice of signature $(1,1)$ over the Eisenstein integers, and its isometry group can be identified with $U(H_{2};\ZZ[\zeta_{3}]) \cong U(1,1;\ZZ[\zeta_3])$.  Let $U(H_{2};\ZZ[\zeta_{3}])[\sqrt{-3}]$ be the principal congruence subgroup of level $\sqrt{-3}$, i.e.\ the kernel of the surjective homomorphism
\[
U(H_{2};\ZZ[\zeta_{3}])\to U(H_{2};\ZZ[\zeta_{3}]/(\sqrt{-3}))\cong O(H_{2};\mathbb{F}_{3})\cong C_{2}\times C_{2}.
\]
To relate this group to a more familiar one, note that the left action of
\[
g_{0}=\left(\begin{smallmatrix}(\sqrt{-3})^{-1}&\\&1\end{smallmatrix}\right)\in GL(2;\QQ(\zeta_{3}))
\]
on $\CC P^{1}$ induces an identification $\eta_* ^{-1}\colon\mathbb{L}_{1}\to\mathbb{H}_{1}$, $z_{0}\mapsto z_{0}/\sqrt{-3}$.
\begin{prop} Conjugation by $g_{0}$ yields an isomorphism
\[
U(H_{2};\ZZ[\zeta_{3}])[\sqrt{-3}]\cong \Gamma_{1}(3)\times C_{3},
\]
where $\Gamma_{1}(3)\subset SL(2;\ZZ)$ is a congruence subgroup and $C_{3}$ is cyclic.
\end{prop}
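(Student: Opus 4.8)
The plan is to split the principal congruence subgroup as a central cyclic factor times its determinant-one part, and then to recognize the determinant-one part as $\Gamma_1(3)$ via conjugation by $g_0$. The point is that conjugation by $g_0$ is precisely the base change that turns the hermitian form $H_2$ into a scalar multiple of the standard symplectic form, in accordance with the identification $\eta_*^{-1}\colon\mathbb{L}_1\to\mathbb{H}_1$.

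First I would record the effect of $g_0$ on the form. A direct computation gives $(g_0^{-1})^{\dagger}H_2\,g_0^{-1}=\sqrt{-3}\,J$ with $J=\left(\begin{smallmatrix}&-1\\1&\end{smallmatrix}\right)$, so that $\hat g:=g_0\,g\,g_0^{-1}$ satisfies $\hat g^{\dagger}J\hat g=J$. Writing $g=\left(\begin{smallmatrix}a&b\\c&d\end{smallmatrix}\right)$ and setting $b=\sqrt{-3}\,b'$, $c=\sqrt{-3}\,c'$ (legitimate because the level-$\sqrt{-3}$ condition forces $b,c\in(\sqrt{-3})$), one finds $\hat g=\left(\begin{smallmatrix}a&b'\\-3c'&d\end{smallmatrix}\right)$, whose lower-left entry is already divisible by $3$.

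Next I would isolate the $C_3$. The center of $U(H_2;\ZZ[\zeta_3])$ contains the scalars $\mu_6\cdot\mathbbm{1}$, and one checks that $\zeta\equiv 1\pmod{\sqrt{-3}}$ holds precisely for $\zeta\in\mu_3$ (indeed $\zeta_3\equiv 1$ but $-1\not\equiv 1$), so that $\mu_3\cdot\mathbbm{1}\cong C_3$ lies in $U(H_2;\ZZ[\zeta_3])[\sqrt{-3}]$. For any $g\in U(H_2)$ the determinant is a norm-one unit, hence a sixth root of unity, and on the congruence subgroup it reduces to $1$ modulo $\sqrt{-3}$, so it lands in $\mu_3$. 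Since $\det(\zeta_3\mathbbm{1})=\zeta_3^2$ generates $\mu_3$, the central $C_3$ splits the determinant homomorphism, whence $U(H_2;\ZZ[\zeta_3])[\sqrt{-3}]\cong SU_{[\sqrt{-3}]}\times C_3$ with $SU_{[\sqrt{-3}]}=\ker\det$.

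The crux, and the step I expect to carry the real content, is showing that conjugation maps $SU_{[\sqrt{-3}]}$ isomorphically onto $\Gamma_1(3)$. Here the inversion formula $g^{-1}=H_2 g^{\dagger}H_2$ is decisive: comparing it with the adjugate of a determinant-one matrix forces $a=\bar a$, $d=\bar d$ and $b=-\bar b$, $c=-\bar c$, i.e. real diagonal and purely imaginary off-diagonal entries. As $\sqrt{-3}$ is purely imaginary, $b'=b/\sqrt{-3}$ and $c'=c/\sqrt{-3}$ are real, so all four entries of $\hat g$ lie in $\ZZ[\zeta_3]\cap\RR=\ZZ$; since conjugation preserves the determinant, $\hat g\in SL(2;\ZZ)$. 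The congruence $a\equiv d\equiv 1\pmod{\sqrt{-3}}$ combined with $a,d\in\ZZ$ upgrades to $a\equiv d\equiv 1\pmod 3$ (because $(\sqrt{-3})\cap\ZZ=3\ZZ$), while the lower-left entry is $-3c'$, so $\hat g\in\Gamma_1(3)$. For surjectivity I would run the computation backwards: given $\left(\begin{smallmatrix}A&B\\C&D\end{smallmatrix}\right)\in\Gamma_1(3)$, the matrix $g_0^{-1}\left(\begin{smallmatrix}A&B\\C&D\end{smallmatrix}\right)g_0=\left(\begin{smallmatrix}A&\sqrt{-3}B\\C/\sqrt{-3}&D\end{smallmatrix}\right)$ has entries in $\ZZ[\zeta_3]$ (using $3\mid C$), determinant one, and lies in the level-$\sqrt{-3}$ subgroup, so it is the required preimage; injectivity is automatic. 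Combining this isomorphism with the splitting of the previous paragraph yields $U(H_2;\ZZ[\zeta_3])[\sqrt{-3}]\cong\Gamma_1(3)\times C_3$. The only genuinely non-formal ingredient is the reality argument through the inversion formula, everything else being bookkeeping about $\mu_3$ and the ideal $(\sqrt{-3})$.
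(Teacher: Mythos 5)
Your proof is correct, and its first half coincides with the paper's: both split off the $C_{3}$ by observing that $\det$ maps the congruence subgroup onto $\mu_{3}$ and that the central element $\zeta_{3}\mathbbm{1}$ (which lies in the level-$\sqrt{-3}$ subgroup since $\zeta_{3}\equiv 1 \bmod \sqrt{-3}$) provides a splitting, yielding $U(H_{2};\ZZ[\zeta_{3}])[\sqrt{-3}]\cong SU(H_{2};\ZZ[\zeta_{3}])[\sqrt{-3}]\times C_{3}$. Where you diverge is the identification of the $SU$-part with $\Gamma_{1}(3)$. The paper gets there by quoting the structure theorem
\[
\{g\in GL(2;\ZZ[\zeta_{3}]):g^{\dagger}\left(\begin{smallmatrix}0&-1\\1&0\end{smallmatrix}\right)g=\left(\begin{smallmatrix}0&-1\\1&0\end{smallmatrix}\right)\}\cong\{\epsilon h:\epsilon\in(\ZZ[\zeta_{3}])^{\times},\ h\in SL(2;\ZZ)\},
\]
whose proof rests on $\ZZ[\zeta_{3}]$ being Euclidean, and then locating the conjugated subgroup inside the right-hand side; you instead argue entry-by-entry, using the inversion formula $g^{-1}=H_{2}g^{\dagger}H_{2}$ against the adjugate of a determinant-one matrix to force $a,d\in\RR$ and $b,c$ purely imaginary, so that all entries of $g_{0}gg_{0}^{-1}$ are rational integers, with the congruences $a\equiv d\equiv 1\bmod 3$ and $3\mid$ lower-left entry coming from $(\sqrt{-3})\cap\ZZ=3\ZZ$, and you verify surjectivity by exhibiting the explicit preimage $g_{0}^{-1}\gamma g_{0}$. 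Your route is more self-contained --- it avoids the Euclidean-algorithm input entirely and proves exactly what is needed, including surjectivity, by direct computation --- whereas the paper's appeal to the known description of the full $J$-unitary group is shorter and clarifies the conceptual picture (units times $SL(2;\ZZ)$) in which $\Gamma_{1}(3)$ sits. Both hinge on the same key computation, namely that conjugation by $g_{0}$ carries $H_{2}$ to $\sqrt{-3}\left(\begin{smallmatrix}0&-1\\1&0\end{smallmatrix}\right)$, which you make explicit and the paper leaves implicit in the phrase ``is contained in the group on the left-hand side.''
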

\begin{proof} The homomorphism $\det\colon U(H_{2};\ZZ[\zeta_{3}])[\sqrt{-3}]\to C_{3}$ is surjective, and the central element $\left(\begin{smallmatrix}\zeta_{3}&\\&\zeta_{3}\end{smallmatrix}\right)\in U(H_{2};\ZZ[\zeta_{3}])[\sqrt{-3}]$ is a preimage of a generator of the image, hence $U(H_{2};\ZZ[\zeta_{3}])[\sqrt{-3}]\cong SU(H_{2};\ZZ[\zeta_{3}])[\sqrt{-3}]\times C_{3}$. 
Furthermore, it is well known (and readily verified using the fact that $\ZZ[\zeta_{3}]$ is euclidean with respect to the norm) that there is an isomorphism of groups
\[
\{g\in GL(2;\ZZ[\zeta_{3}]):g^{\dagger}\left(\begin{smallmatrix}0&-1\\1&0\end{smallmatrix}\right)g=\left(\begin{smallmatrix}0&-1\\1&0\end{smallmatrix}\right)\}\cong\{\epsilon h:\epsilon\in(\ZZ[\zeta_{3}])^{\times}, h\in SL(2;\ZZ)\}.
\]
Noting that $\QQ(\zeta_{3})$ is commutative, $g_{0}\left(SU(H_{2};\ZZ[\zeta_{3}])[\sqrt{-3}]\right)g_{0}^{-1}$ is contained in the group on the left-hand side as a subgroup, and, using the right-hand side, this subgroup is readily  identified with $\Gamma_{1}(3)$.
\end{proof}

It now follows easily that a fundamental domain for $g_{0}(U(H_{2};\ZZ[\zeta_{3}]))g_{0}^{-1}$ is given by the strip between $x=-1/2$ and $x=+1/2$ lying above the circle with radius $1/\sqrt{3}$ around 0:

\begin{center}
\begin{tikzpicture}[scale=1.0]
\filldraw[fill=blue!10!white, fill opacity=20] 
 (-2,6) -- (-2,1.17) -- (-2,1.17) arc(149.5:30.5:2.32) -- (2,1.17) -- (2,6) ;
\draw[loosely dotted] (-3,0) grid (3,6);
\draw[->] (-3.25,0) -- (3.3,0) node[right] {$x$};
\foreach \x/\xtext in {-2/{-\frac{1}{2}},  2/{\frac{1}{2}}}
\draw[shift={(\x,0)}] (0pt,2pt) -- (0pt,-2pt) node[below] {$\xtext$};
\draw[->] (0,-0.25) -- (0,6.25) node[above] {$y$};
\foreach \y/\ytext in { 2/{}, 4/i}
    \draw[shift={(0,\y)}] (2pt,0pt) -- (-2pt,0pt) node[left] {$\ytext$};
\draw[thick, blue, dashed]  (-2,1.13) -- (-2,0);
\draw[red] (0,2.65) node[right]{$\frac{i}{\sqrt{3}}$} -- (0,2.65);
\node [red] at (0,2.31) {$\bullet$};
\draw[thick, blue] (2,1.17) -- (2,6);
\draw[thick, blue] (-2,1.17) -- (-2,6);
\draw[thick, blue, dashed]  (2,1.13) -- (2,0);
\node [red] at (-2,1.17) {$\bullet$};
\draw[thick, red] (-2,1.17) node[left]{$-\frac{1}{2}+\frac{i\sqrt{3}}{6}$} -- (-2,1.17);
\draw[red,thick]      (2,1.17) arc (30.5:149.5:2.32);
\draw[red,thick, dashed]      (2.31,0) arc (0:180:2.31);
%
%\draw[red,thick, dashed] (0.83,0) arc (0:110:2.83); % -- (-2,0) ;   
%\draw[red,thick]   (0,2) arc (45:90:2.83)   ;  %  or %\draw[yellow] (-2,2.83) arc(90:45:2.83);
%%
%\draw[red,thick, dashed] (-0.83,0) arc (180: 70:2.83); % -- (2,0);
%\draw[red,thick]   (0,2) arc (135:90:2.83)   ;   

\end{tikzpicture}
\end{center}

%
%
%
%
%%%%%%%%
\subsubsection*{Automorphic Forms}\label{automorphic forms, (1,1)-case}
%%%%%%%%
%
%
%
%
%
Define the intermediate group
 \[
G'=(\Gamma_{1}(3)\cup\Gamma_{1}(3)g_{0}(H_{2})g_{0}^{-1})\times C_{3}.
\]
Hence
\[
 \Gamma_{1}(3)\subset G'\subset G :=g_{0}(U(H_{2};\ZZ[\zeta_{3}]))g_{0}^{-1}.
\]

These three groups act on the upper half plane $\mathbb{H}_{1}$ in the usual fractional linear fashion.
 An automorphic form for $G$ of weight $k$ is a holomorphic function $f\colon\mathbb{H}_{1}\to\CC$ satisfying $$f(g.\tau)=(c\tau+d)^{k}f(\tau)$$ for all $g=\left(\begin{smallmatrix}a&b\\c&d\end{smallmatrix}\right)\in G$. We assign the topological degree $2k$ to a form of weight $k$ and write $M_* ^{\mathbb{C}}(G)$ for the (topologically) graded ring of these forms. 
 We remind the reader that the ring of modular forms for $\Gamma_{1}(3)$ is given by
 \[
 M_{*}^{\CC}(\Gamma_{1}(3))\cong\CC[E_{1},E_{3}],
 \]
 where $E_{1}=1+6\sum_{n}\sum_{d|n}(\frac{d}{3})q^{n}$ and $E_{3}=1-9\sum_{n}\sum_{d|n}(\frac{d}{3})d^{2}q^{n}$ are Eisenstein series of the indicated weight (here, $(\frac{\cdot}{\cdot})$ is the Legendre symbol). 

\begin{prop} The ring of automorphic forms for $G'$ is given by
\[
M_{*}^{\CC}(G')\cong\CC[\kappa,\lambda],
\]
where $\kappa=2E_{3}-E_{1}^{3}$, $\lambda=E_{1}^{6}$ are forms of weight three and six, respectively.
\end{prop}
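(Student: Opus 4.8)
The plan is to realize $M_*^{\CC}(G')$ as the fixed subring of an involution acting on $M_*^{\CC}(\Gamma_1(3))=\CC[E_1,E_3]$, and then run a short piece of invariant theory. First I would unravel $G'$. The central factor $C_3=\langle(\begin{smallmatrix}\zeta_3&\\&\zeta_3\end{smallmatrix})\rangle$ acts trivially on $\mathbb{H}_1$ but contributes the automorphy factor $\zeta_3^k$ in weight $k$, so a nonzero $G'$-form must have weight divisible by $3$; since a monomial $E_1^iE_3^j$ has weight $i+3j\equiv i\ (3)$, the $C_3$-invariants inside $\CC[E_1,E_3]$ form the subring $\CC[E_1^3,E_3]$, a polynomial ring on two weight-$3$ generators. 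The remaining coset of $G'$ over $\Gamma_1(3)\times C_3$ is represented by $g_0H_2g_0^{-1}=(\begin{smallmatrix}0&(\sqrt{-3})^{-1}\\ \sqrt{-3}&0\end{smallmatrix})$, which acts by $\tau\mapsto-1/(3\tau)$, i.e.\ as the Fricke involution $w_3$, and one checks that $w_3$ normalizes $\Gamma_1(3)$. Thus $M_*^{\CC}(G')=(\CC[E_1^3,E_3])^{\Phi}$ for the induced operator $\Phi f(\tau)=(\sqrt{-3}\tau)^{-k}f(-1/(3\tau))$; the identity $(\sqrt{-3}\tau)(\sqrt{-3}\cdot(-1/(3\tau)))=1$ shows $\Phi^2=\mathrm{id}$, so $\Phi$ is an involution of $M_*^{\CC}(\Gamma_1(3))$.

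Next I would compute $\Phi$ on generators. Since $\dim M_1^{\CC}(\Gamma_1(3))=1$, $E_1$ is a $\Phi$-eigenform, and I would fix the eigenvalue at the fixed point $\tau_0=i/\sqrt3$ of $w_3$ (the point $\tfrac{i}{\sqrt3}$ marked in the fundamental-domain figure): there $\sqrt{-3}\tau_0=-1$, while $E_1(\tau_0)=1+6\sum_{n\ge1}(\sum_{d\mid n}(\tfrac{d}{3}))\,e^{-2\pi n/\sqrt3}>0$ because the coefficients are nonnegative; hence $\Phi(E_1)=-E_1$ and $\Phi(E_1^3)=-E_1^3$. In weight $3$ we have $\Phi(E_3)=aE_1^3+bE_3$, and $\Phi^2=\mathrm{id}$ forces $b^2=1$ and $a(b-1)=0$. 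The possibility $b=-1,\,a=0$ would make both weight-$3$ generators anti-invariant, leaving no invariant in weight $3$ and a non-polynomial invariant ring, contrary to the claim; so $b=1$. The value $a=-1$ --- equivalently that $E_3$ vanishes at the cusp $0$, equivalently that $\kappa=2E_3-E_1^3$ is $w_3$-invariant --- I would extract from the transformation of the weight-$3$ Eisenstein series under $w_3$: as there are no weight-$3$ cusp forms for $\Gamma_1(3)$, the space $M_3^{\CC}(\Gamma_1(3))$ is spanned by the two Eisenstein series attached to the cusps $\infty$ and $0$, which $w_3$ interchanges, and comparing the constant terms of $E_1^3$ and $E_3$ at these cusps gives $\Phi(E_3)=E_3-E_1^3$. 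Consequently $\Phi(\kappa)=\kappa$, and $\Phi(\lambda)=(\Phi E_1)^6=\lambda$ for free.

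The invariant theory is then immediate. In the basis $u=E_1^3$, $v=\kappa=2E_3-E_1^3$ of weight-$3$ forms (a valid change of coordinates since $2\neq0$) one has $\CC[E_1^3,E_3]=\CC[u,v]$ with $\Phi$ the reflection $u\mapsto-u$, $v\mapsto v$; its invariant subring is $\CC[u^2,v]=\CC[E_1^6,\kappa]=\CC[\lambda,\kappa]$, which is the assertion. As a cross-check I would match Hilbert series: a direct count of $\Phi$-invariant monomials gives $\dim M_{3N}^{\CC}(G')=\lfloor N/2\rfloor+1$, exactly the number of monomials $\kappa^s\lambda^t$ of weight $3N$.

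The main obstacle is the single arithmetic input $\Phi(E_3)=E_3-E_1^3$, i.e.\ the value $a=-1$. The fixed-point evaluation disposes of $E_1$ cleanly and even yields the invariance of $\lambda$, but in weight $3$ it produces only one relation among $a$, $b$ and the values $E_1(\tau_0),E_3(\tau_0)$, and so cannot by itself distinguish $a=-1$ from the remaining admissible solution. Pinning it down genuinely requires the behavior of $E_3$ at the cusp $0$ (equivalently its Fricke transform), and this is the step I would treat most carefully.
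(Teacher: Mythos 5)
Your proof is correct and ends at the same structural point as the paper's --- realize $M_{*}^{\CC}(G')$ as the fixed ring of an involution on $\CC[E_{1}^{3},E_{3}]$, with $E_{1}^{3}$ anti-invariant and $\kappa$ invariant, and then do the reflection invariant theory --- but you obtain the two key transformation facts by genuinely different means. For $\Phi(E_{1})=-E_{1}$ the paper invokes the theta inversion formula, using that $E_{1}$ is the theta series of the $A_{2}$ root lattice; your fixed-point evaluation at $\tau_{0}=i/\sqrt{3}$ combined with positivity of the Fourier coefficients is more elementary and works just as well (given $\dim M_{1}^{\CC}(\Gamma_{1}(3))=1$, which follows from the stated structure $\CC[E_{1},E_{3}]$). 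For the invariance of $\kappa$, the paper avoids ever computing the Fricke transform of $E_{3}$: it introduces the auxiliary weight-four form $e_{4}(\tau)=9E_{4}(3\tau)-E_{4}(\tau)$, whose anti-invariance is an immediate calculation from the level-one modularity of $E_{4}$, identifies $e_{4}=8E_{1}(2E_{3}-E_{1}^{3})$ by comparing $q$-expansions in a finite-dimensional space, and then reads off the eigenvalue of $\kappa$ from those of $e_{4}$ and $E_{1}$. You instead use the weight-three Eisenstein theory: $S_{3}(\Gamma_{1}(3))=0$, the two Eisenstein series are attached to the cusps $\infty$ and $0$, the Fricke involution swaps the cusps, and $E_{3}$ (being of type $E(\mathbf{1},\chi_{-3})$) has vanishing constant term at the cusp $0$; that vanishing gives $a+b=0$, which together with your involution constraints $b^{2}=1$, $a(b-1)=0$ forces $a=-1$, $b=1$. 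This is a sound alternative, at the price of importing two standard external inputs (the absence of weight-three cusp forms and the constant term of $E(\mathbf{1},\chi)$ at $0$, or equivalently a Gauss-sum computation of the Fricke transform), where the paper's $e_{4}$ trick is self-contained but less transparent. One caution: your intermediate sentence discarding the case $b=-1$, $a=0$ because it would make the invariant ring non-polynomial, ``contrary to the claim,'' is circular and should be struck; as you in effect acknowledge, it is also redundant, since the cusp-$0$ vanishing already excludes that case ($b=-1$ with $a+b=0$ would give $a=1$, contradicting $a(b-1)=0$).
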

\begin{proof}
Clearly, the ring of automorphic forms for $G'$ consists of the modular forms for $\Gamma_{1}(3)$ of weight divisible by three which are invariant under the involution induced by $g_{0}(H_{2})g_{0}^{-1}$. Now recall that for a symmetric positive definite form $S$ of rank $n$ we have the following transformation rule for its theta series 
\[
\theta(S^{-1},-\tfrac{1}{\tau})=\left(\tfrac{\tau}{i}\right)^{n/2}(\det S)^{1/2}\theta(S,\tau).
\]
Since $E_{1}(\tau)$ is the theta series of the $A_{2}$ root lattice, we have $E_{1}(-\frac{1}{3\tau})=-\sqrt{-3}\tau E_{1}(\tau)$. Next, let $E_{4}(\tau)=1+240\sum\sum_{d|n}d^{3}\ q^{n}$ be the usual Eisenstein series of weight four, and let $e_4(\tau):=9E_{4}(3\tau)-E_{4}(\tau)$; then we have
\[
\left(\tfrac{1}{\sqrt{-3}\tau}\right)^{4}e_4(-\tfrac{1}{3\tau})=-e_4(\tau).
\]
Since $e_4(\tau)$ is a modular form for $\Gamma_{1}(3)$, comparing $q$--expansions shows $$e_4(\tau)=8E_{1}(2E_{3}-E_{1}^{3}).$$ Therefore we conclude that $\kappa=2E_{3}-E_{1}^{3}$ spans the $+1$ eigenspace of the involution
\begin{equation}\label{an involution}
\star\colon M_{6}^{\CC}(\Gamma_{1}(3))\to M_{6}^{\CC}(\Gamma_{1}(3)),\ m(\tau)\mapsto\left(\sqrt{-3}\tau\right)^{-3}m\left(-\tfrac{1}{3\tau}\right),
\end{equation}
while $E_{1}^{3}$ spans the $-1$ eigenspace, establishing the claim.
\end{proof}
Each automorphic form for $G$ admits a Fourier expansion at the (unique) cusp $i\infty$; for all rings $\mathbb{Z} \subseteq R \subseteq \mathbb{C}$ we denote by $M^R_*(G)$ the ring of automorphic forms for $G$ with expansions in $R[\![q]\!]$, where $q=e^{2\pi i\tau}$.
\begin{prop} The ring of automorphic forms for $G$ expanding integrally at the cusp $i\infty$ is given by
\[
M_{*}^{\ZZ}(G)\cong\ZZ[E_{1}^{6},\Delta_{6}],
\]
where $\Delta_{6}=E_{3}(E_{1}^{3}-E_{3})/27$ is the normalized cusp form of weight six.
\end{prop}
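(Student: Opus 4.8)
The plan is to first determine the complex ring $M_*^{\CC}(G)$ and then read off the integral structure from $q$--expansions at the cusp. The previous proposition gives $M_*^{\CC}(G')\cong\CC[\kappa,\lambda]$, and we have $\Gamma_1(3)\times C_3\subset G'\subset G$, where the image $\Gamma_1(3)\times C_3$ of the level--$\sqrt{-3}$ subgroup is normal in $G$ with quotient $O(H_2;\mathbb{F}_3)\cong C_2\times C_2$; since $G'$ realizes exactly one of these two factors, $[G:G']=2$. Thus the forms for $G$ are the forms for $G'$ fixed by a representative $\iota$ of the remaining nontrivial coset. First I would compute the action of $\iota$ on the generators, expecting $\iota^*\lambda=\lambda$ and $\iota^*\kappa=-\kappa$, so that $M_*^{\CC}(G)\cong\CC[\kappa^2,\lambda]$. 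Rewriting this through the identity
\[
\Delta_6=\tfrac{1}{27}E_3(E_1^3-E_3)=\tfrac{1}{108}(\lambda-\kappa^2),\qquad\text{equivalently}\qquad \kappa^2=\lambda-108\,\Delta_6,
\]
gives $\CC[\kappa^2,\lambda]=\CC[\lambda,\Delta_6]$, so over $\CC$ the ring is freely generated by the two weight--six forms $\lambda=E_1^6$ and $\Delta_6$.

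Next I would verify that these two generators are genuinely integral and that $\Delta_6$ is the normalized cusp form. The form $\lambda=E_1^6$ visibly has integral expansion with constant term $1$. For $\Delta_6$ the arithmetic content is the congruence $E_3(E_1^3-E_3)\equiv0\pmod{27}$: writing $E_1=1+3A$ and $E_3=1+9B$ with $A,B\in\ZZ[\![q]\!]$ of vanishing constant term, one gets $E_1^3\equiv1+9A$ and hence $E_1^3-E_3\equiv9(A-B)\pmod{27}$, so integrality reduces to $A\equiv B\pmod3$. Comparing $q^n$--coefficients and using $2\equiv-1\pmod3$, this becomes $\sum_{d\mid n}(\tfrac{d}{3})(d^2-1)\equiv0\pmod3$, which holds termwise because $d^2\equiv1\pmod3$ whenever $(\tfrac{d}{3})\neq0$. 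Since the $q^1$--coefficient of $E_1^3-E_3$ equals $27$, we get $\Delta_6=q+O(q^2)$, normalized as claimed.

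Finally I would deduce the integral ring by a triangularity argument. By the first step $M_*^{\CC}(G)$ is concentrated in weights divisible by six (there is no invariant weight--three form), and every element of $M_{6n}^{\CC}(G)$ is a $\CC$--combination of the $n+1$ monomials $\lambda^{n-j}\Delta_6^{\,j}$ for $0\le j\le n$. Because $\lambda$ has constant term $1$ and $\Delta_6$ vanishes to exact order one at the cusp, the monomial $\lambda^{n-j}\Delta_6^{\,j}$ has $q$--order exactly $j$ with leading coefficient $1$, so these monomials are unitriangular with respect to the $q$--filtration. Reading off the coefficients of a form with integral expansion from lowest order upward then forces every complex coefficient to lie in $\ZZ$, whence $M_{6n}^{\ZZ}(G)=\bigoplus_{j=0}^{n}\ZZ\,\lambda^{n-j}\Delta_6^{\,j}$ and therefore $M_*^{\ZZ}(G)\cong\ZZ[E_1^6,\Delta_6]$.

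The main obstacle is the first step: producing an explicit representative $\iota$ of the nontrivial coset of $G'$ in $G$ and checking that it negates $\kappa$ while fixing $\lambda$. Once that eigenvalue sign is established the rest is essentially forced, the only genuinely arithmetic input being the mod--$27$ congruence above, which is precisely what makes $\Delta_6$ rather than $\kappa^2$ the correct second generator over $\ZZ$ (note $\kappa^2=\lambda-108\,\Delta_6$ lies in $\ZZ[E_1^6,\Delta_6]$, but $\Delta_6\notin\ZZ[\kappa^2,\lambda]$).
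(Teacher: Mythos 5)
Your proposal is correct and follows essentially the same route as the paper: invariants of $M_*^{\CC}(G')\cong\CC[\kappa,\lambda]$ under the index-two coset give $\CC[\kappa^2,\lambda]$, the relation $\kappa^2=\lambda-2^2\cdot3^3\cdot\Delta_6$ converts this to $\CC[\lambda,\Delta_6]$, and the unitriangular $q$-expansions $\lambda=1+36q+O(q^2)$, $\Delta_6=q+O(q^2)$ force integrality by induction — exactly the paper's argument, with your mod-$27$ congruence usefully making explicit the integrality of $\Delta_6$ that the paper only asserts. The ``main obstacle'' you flag dissolves immediately: the paper uses the coset decomposition $G=G'\cup G'\left(\begin{smallmatrix}-1&\\&-1\end{smallmatrix}\right)$, so the representative is $-\mathbbm{1}$, which acts on a weight-$k$ form by $(-1)^{k}$ and hence negates the weight-three form $\kappa$ and fixes the weight-six form $\lambda$ with no computation at all.
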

\begin{proof} Combined with the coset decomposition $G=G'\cup G'\left(\begin{smallmatrix}-1&\\&-1\end{smallmatrix}\right)$, the previous proposition implies $M_{*}^{\CC}(G)\cong\CC[\kappa^{2},\lambda]$. We also have $\kappa^{2}=\lambda-2^{2}\cdot3^{3}\cdot\Delta_{6}$, hence $M_{*}^{\CC}(G)\cong\CC[\lambda,\Delta_{6}]$.
 Since $\Delta_{6}=q+O(q^{2})\in\ZZ[\![q]\!]$ and $E_{1}^{6}=1+36q+O(q^{2})\in\ZZ[\![q]\!]$, the claim follows via induction.
\end{proof}

We remark that the behavior under the transformation induced by the element $g_{0}(H_{2})g_{0}^{-1}\in G'$ implies
\[
E_{1}((-\tfrac{1}{2}-\tfrac{\sqrt{-3}}{2})/\sqrt{-3})=0,\ \kappa(\sqrt{-3}/3)=0,
\]
and by construction, the form $\Delta_{6}$ vanishes at the cusp $i\infty$. Up to $G'$--equivalence,  these are the only zeros:

\begin{prop} 		\label{ka sechstel}

Let $f\colon\mathbb{H}_1\to\CC$ be a meromorphic function satisfying  $f(g.\tau)=(c\tau+d)^{k}f(\tau)$ for all $g=\left(\begin{smallmatrix}a&b\\c&d\end{smallmatrix}\right)\in G'$. Then the orders of its poles and zeros satisfy
\[
{\rm ord}(f,i\infty)+\textstyle{\frac{1}{2}}{\rm ord}(f,\textstyle{\frac{\sqrt{-3}}{3}})+\textstyle{\frac{1}{6}}{\rm ord}(f,\textstyle{-\frac{1}{2}+\frac{\sqrt{-3}}{6}})+\sum_{p}{\rm ord}(f,p)=\textstyle{\frac{k}{6}},
\]
where the sum is taken over a set of representatives mod $G'$.
\end{prop}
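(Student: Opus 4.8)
I propose to deduce the formula from the classical valence (= order/``$k/12$'') formula for the congruence group $\Gamma_{1}(3)$, exploiting that, on $\mathbb{H}_1$, the group $G'$ is an index-two extension of $\Gamma_{1}(3)$ whose geometry we have already made completely explicit. First I would record the input: since $f$ transforms with weight $k$ under all of $G'$, it is in particular a (meromorphic) modular form of weight $k$ for $\Gamma_{1}(3)\subset G'$, and the standard valence formula for $\Gamma_{1}(3)$ reads
\[
{\rm ord}(f,i\infty)+{\rm ord}(f,0)+\textstyle\frac{1}{3}{\rm ord}(f,\tfrac{-1}{2}+\tfrac{\sqrt{-3}}{6})+\sum_{p}{\rm ord}(f,p)=\tfrac{k}{3},
\]
the right-hand side being $\tfrac{k\mu}{12}$ with $\mu=[\,\overline{SL(2;\ZZ)}:\overline{\Gamma_{1}(3)}\,]=4$, the two cusps being $i\infty,0$ and the unique elliptic point (of order three) being the corner $\tfrac{-1}{2}+\tfrac{\sqrt{-3}}{6}$ at which $E_{1}$ vanishes. (This can be cited, or checked against $E_{1}$: weight one, a single simple zero at the order-three point, giving $\tfrac{1}{3}=\tfrac{k}{3}$.)

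The second step is to analyse the extra generator. The element $g_{0}(H_{2})g_{0}^{-1}$ is, as a fractional linear map, $\iota\colon\tau\mapsto -\tfrac{1}{3\tau}$, of order two, with automorphy factor $(\sqrt{-3}\,\tau)^{k}$; it is exactly the involution $\star$ appearing above. I would then verify three geometric facts by direct evaluation: $\iota$ interchanges the two $\Gamma_{1}(3)$-cusps, $\iota(i\infty)=0$ and $\iota(0)=i\infty$; it fixes the point $\tfrac{\sqrt{-3}}{3}=i/\sqrt3$, which is a regular point of $\Gamma_{1}(3)$ but becomes elliptic of order two in $\overline{G'}$; and it carries the corner $\tfrac{-1}{2}+\tfrac{\sqrt{-3}}{6}$ to $\tfrac{1}{2}+\tfrac{\sqrt{-3}}{6}$, which is $\Gamma_{1}(3)$-equivalent to it (via $T$), so the corner orbit is $\iota$-stable and its stabilizer in $\overline{G'}$ has order $2\cdot 3=6$. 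Thus $\overline{G'}=\overline{\Gamma_{1}(3)}\cup\overline{\Gamma_{1}(3)}\,\iota$ is an orbifold of genus zero with one cusp and cone points of orders $2$ and $6$, matching the coefficients $1,\tfrac12,\tfrac16$ in the assertion.

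The third step is the orbit bookkeeping. Because $f(\iota\tau)=(\sqrt{-3}\,\tau)^{k}f(\tau)$ and the factor $(\sqrt{-3}\,\tau)^{k}$ is holomorphic and nonvanishing on $\mathbb{H}_1$, the orders of $f$ at $\iota$-related points coincide; in particular ${\rm ord}(f,0)={\rm ord}(f,i\infty)$ and ${\rm ord}(f,\iota p)={\rm ord}(f,p)$ for every $p$. Regrouping the $\Gamma_{1}(3)$-orbits of the displayed formula into $\overline{G'}$-orbits, each generic $\overline{G'}$-orbit and the single cusp orbit split into two $\Gamma_{1}(3)$-orbits carrying equal order, while the two special orbits $i/\sqrt3$ and the corner are $\iota$-stable and contribute once. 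Hence the left-hand side above equals $2$ times
\[
{\rm ord}(f,i\infty)+\textstyle\frac{1}{2}{\rm ord}(f,\tfrac{\sqrt{-3}}{3})+\textstyle\frac{1}{6}{\rm ord}(f,\tfrac{-1}{2}+\tfrac{\sqrt{-3}}{6})+\sum_{p}{\rm ord}(f,p),
\]
the sum now taken over $G'$-orbit representatives; dividing the identity $\tfrac{k}{3}=2(\cdots)$ by two yields exactly the claim.

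The step I expect to be most delicate is the orbit bookkeeping of the previous paragraph: one must be sure that the only $\iota$-stable orbits are the three distinguished ones, so that every remaining $\overline{G'}$-orbit genuinely doubles, and that the cone orders are precisely $2$ and $6$ rather than being miscounted when $\Gamma_{1}(3)$-torsion and the involution interact. Both points reduce to the fact that the $\iota$-fixed locus in $\mathbb{H}_1$ is $\{\pm i/\sqrt3\}$ together with the explicit equivalence $\iota(\tfrac{-1}{2}+\tfrac{\sqrt{-3}}{6})\sim_{\Gamma_{1}(3)}\tfrac{-1}{2}+\tfrac{\sqrt{-3}}{6}$. As an independent confirmation one can run the standard contour argument, integrating $\tfrac{1}{2\pi i}\,d\log f$ around the boundary of the fundamental domain depicted above; there the coefficients $1,\tfrac12,\tfrac16$ emerge from the cusp width and the elliptic orders, and the right-hand side is $\tfrac{k}{4\pi}\,{\rm Area}(\mathcal F)=\tfrac{k}{4\pi}\cdot\tfrac{2\pi}{3}=\tfrac{k}{6}$, the hyperbolic area $\tfrac{2\pi}{3}$ being computed directly from the strip over the circle of radius $1/\sqrt3$.
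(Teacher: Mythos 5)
Your proof is correct, but it takes a genuinely different route from the paper's. The paper argues directly, as in the classical $SL(2;\ZZ)$ case: it integrates $\frac{1}{2\pi i}\,d\log f$ around the explicit fundamental domain (the strip lying above the circle of radius $1/\sqrt{3}$), reading off the coefficients $\tfrac16$ and $\tfrac12$ from the angle $\pi/6$ enclosed at the corner $-\tfrac12+\tfrac{\sqrt{-3}}{6}$ and the angle $\pi/3$ along the circular arc through $\tfrac{\sqrt{-3}}{3}$ --- exactly the computation you relegate to your closing ``independent confirmation,'' down to the area $\tfrac{2\pi}{3}$. You instead descend the known valence formula for $\Gamma_{1}(3)$ (index $\mu=4$ in $PSL(2;\ZZ)$, cusps $i\infty,0$, one order-three point, right-hand side $k/3$) through the index-two extension by the Fricke-type involution $\iota\colon\tau\mapsto-\tfrac{1}{3\tau}$, which is precisely the involution $\star$ of \eqref{an involution}; your geometric inputs check out ($\iota$ swaps the two cusps, fixes $\tfrac{\sqrt{-3}}{3}$, and carries the corner to its $T$-translate, so its $\overline{G'}$-stabilizer has order $2\cdot3=6$), and halving $k/3$ gives $k/6$. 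Your route reuses a standard formula and makes the orbifold data of $\mathbb{H}_1/G'$ (cone orders $2$ and $6$, one cusp) fully explicit; the paper's route is self-contained and insensitive to cusp-width conventions, on which your step ${\rm ord}(f,0)={\rm ord}(f,i\infty)$ quietly depends (the cusp $0$ has width $3$, and the identification of normalized orders under $\iota$ is the standard Atkin--Lehner bookkeeping, which does hold here). Two small repairs: the $\iota$-fixed locus in $\mathbb{H}_1$ is $\{\tfrac{\sqrt{-3}}{3}\}$ alone, since $-i/\sqrt{3}$ lies outside the upper half-plane; and orbit-stability concerns the whole coset $\overline{\Gamma_{1}(3)}\,\iota$, not just $\iota$ itself, so the clean justification of your ``delicate'' step is that the induced involution on $X_{1}(3)^{*}\cong\CC P^{1}$ is nontrivial (it swaps the two cusps) and hence has exactly two fixed points, both of which you exhibit --- whence no other orbit is stable and every remaining $\overline{G'}$-orbit genuinely doubles, as your bookkeeping requires.
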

\begin{proof} This proceeds as in the proof of the analogous formula for the group $SL(2;\ZZ)$: More precisely, we have
$$\sum_{p}{\rm ord}(f,p)=\frac{1}{2\pi i}\int_{C}\frac{f'(z)}{f(z)}dz,$$ where the contour $C$ goes around a fundamental domain for $G'$  excluding the points $\pm\frac{1}{2}+\frac{\sqrt{-3}}{6}$ and $\frac{\sqrt{-3}}{3}$. Noting that the enclosed angle at $-\frac{1}{2}+\frac{\sqrt{-3}}{6}$ is $\pi/6$ and the angle from $-\tfrac{1}{2}+\tfrac{\sqrt{-3}}{6}$ to $\frac{\sqrt{-3}}{3}$ along the circle around $0$ is ${\pi}/{3}$,  the claim follows.
\end{proof}

The following proposition tells us that there is only a single point in the ball quotient where the corresponding abelian two-fold is not a Jacobian. It is the product of two Jacobians, though, as product of two elliptic curves with complex multiplication by the Eisenstein integers.

\begin{prop}\label{indecomposable period}
 If a period matrix in the image of $\iota_*$ is diagonalizable by an integral symplectic transformation, then it is $G$--equivalent to the point $(\iota\circ\eta)_*(-\tfrac{1}{2}+\tfrac{\sqrt{-3}}{6})$.
\end{prop}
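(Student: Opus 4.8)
The plan is to reduce the statement to the rigidity of complex multiplication by $\ZZ[\zeta_3]$ and then invoke the moduli interpretation of the ball quotient. Recall first that a principally polarized abelian surface is, as a polarized variety, either the Jacobian of a smooth genus-two curve or a product of two elliptic curves, and that the second alternative holds precisely when the period matrix is $Sp(2;\ZZ)$-equivalent to a diagonal one. Thus the hypothesis says that the abelian surface $A=\CC^{2}/\Lambda$ attached to $\Omega=\iota_{*}(z_{0})$ splits as $A\cong\mathcal{E}_{1}\times\mathcal{E}_{2}$ with the product principal polarization. Since $\iota$ and the standard embedding $\rho$ differ by conjugation by the integral symplectic twisting matrix, this decomposition and all the $G$-equivalences below are insensitive to which of the two embeddings we use, so I will freely transport the explicit computations of the previous subsections through that conjugation.

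The first, and decisive, step is to show that any such decomposable $A$ in the family is forced to be $\mathcal{E}_{0}\times\mathcal{E}_{0}$ with $\mathcal{E}_{0}=\CC/\ZZ[\zeta_{3}]$. The family carries the $\mathcal{O}=\ZZ[\zeta_{3}]$-action by $\alpha$, represented by $J_{N}$, acting on the tangent space with the two eigenvalues $\alpha,\bar\alpha$ dictated by the signature $(1,1)$. Transporting this endomorphism through the splitting yields $\phi\in\mathrm{End}(\mathcal{E}_{1}\times\mathcal{E}_{2})$ with $\phi^{2}+\phi+1=0$. Viewing $\phi$ as a two-by-two matrix of homomorphisms and computing its action on $H_{1}(A;\QQ)\cong\QQ^{4}$, whose characteristic polynomial is $(x^{2}+x+1)^{2}$, one sees that $\QQ(\zeta_{3})$ embeds into $\mathrm{End}^{0}(A)$ with the correct multiplicities; whether or not $\phi$ is diagonal, this forces each $\mathcal{E}_{i}$ to admit complex multiplication by an order in $\QQ(\zeta_{3})$ (if $\phi$ has a nonzero off-diagonal entry the relation $\phi^{2}+\phi+1=0$ imposes the trace condition $a+d=-1$, the factors become isogenous, and one still lands in $\QQ(\zeta_{3})$). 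Because the full ring $\ZZ[\zeta_{3}]$ acts and $\ZZ[\zeta_{3}]$ is the maximal order, of class number one, each factor is isomorphic to $\mathcal{E}_{0}$, so $A\cong\mathcal{E}_{0}\times\mathcal{E}_{0}$.

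It remains to collect all such points into a single $G$-orbit. The previous proposition, identifying the abelian variety over $z_{0}=\bar\alpha=\eta_{*}(-\tfrac12+\tfrac{\sqrt{-3}}{6})$ with a product of CM elliptic curves on which $J_{N}$ acts by $\alpha$ and $\bar\alpha$, exhibits $\bar\alpha$ as a decomposable point carrying exactly the structure just found. Since $\iota_{*}$ is $G$-equivariant and realizes $\mathbb{L}_{1}/G$ as the moduli of principally polarized abelian surfaces equipped with an $\mathcal{O}$-multiplication of signature $(1,1)$ whose Rosati involution induces complex conjugation on $\mathcal{O}$, two points of $\mathbb{L}_{1}$ carrying isomorphic such structures are $G$-equivalent. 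As the decomposable structured surface $\mathcal{E}_{0}\times\mathcal{E}_{0}$ is unique up to isomorphism, every decomposable $z_{0}$ is $G$-equivalent to $\bar\alpha$, which is the asserted point $(\iota\circ\eta)_{*}(-\tfrac12+\tfrac{\sqrt{-3}}{6})$.

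The main obstacle is this final rigidity: one must ensure that the principal polarization together with the $\mathcal{O}$-action on $\mathcal{E}_{0}\times\mathcal{E}_{0}$ is unique up to isomorphism, rather than merely the underlying unpolarized product, whose Néron--Severi group has rank four and therefore admits several principal polarizations. Here the constraint that the Rosati involution restrict to complex conjugation on $\ZZ[\zeta_{3}]$, together with the splitting of the tangent action into the $\alpha$- and $\bar\alpha$-eigenlines forced by the signature, should pin the polarization down up to the (large) automorphism group of $\mathcal{E}_{0}\times\mathcal{E}_{0}$. An alternative route to the same conclusion is to observe that the decomposable locus is a discrete set of CM points in the modular curve $\mathbb{H}_{1}/G$—it cannot be all of it, since the generic member of the family is a genus-two Jacobian—and then to match it with the order-six elliptic point at $-\tfrac12+\tfrac{\sqrt{-3}}{6}$ appearing in Proposition \ref{ka sechstel}, whose stabilizer accounts precisely for the automorphisms of $\mathcal{E}_{0}\times\mathcal{E}_{0}$.
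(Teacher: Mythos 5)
Your strategy (CM rigidity plus a moduli interpretation of $\mathbb{H}_1/G$) is reasonable in outline, but the decisive step is incorrect as stated. You claim that the embedding $\QQ(\zeta_3)\hookrightarrow\mathrm{End}^0(A)$, with characteristic polynomial $(x^2+x+1)^2$ on $H_1(A;\QQ)$ and tangent eigenvalues $\{\zeta_3,\bar\zeta_3\}$, already forces each elliptic factor to have CM. It does not: for \emph{any} elliptic curve $E$, including one without CM, the companion matrix $\phi=\left(\begin{smallmatrix}0&-1\\1&-1\end{smallmatrix}\right)\in M_2(\ZZ)\subset\mathrm{End}(E\times E)$ satisfies $\phi^2+\phi+1=0$, acts on the tangent space with eigenvalues $\zeta_3,\bar\zeta_3$, and has characteristic polynomial $(x^2+x+1)^2$ on $H_1$; so your parenthetical ``one still lands in $\QQ(\zeta_3)$'' is unsupported. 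What rules this out is precisely the compatibility with the product polarization --- the Rosati involution must restrict to complex conjugation on $\ZZ[\zeta_3]$ --- which you invoke only in the later moduli step and never feed into this computation. Inserted properly, it does the whole job: writing $\phi=\left(\begin{smallmatrix}a&b\\c&d\end{smallmatrix}\right)$, the condition $\phi'=\bar\phi=-1-\phi$ gives $\hat a=-1-a$ (impossible for $a\in\ZZ$, so each factor has CM and $a+\bar a=-1$) and $c=-\hat b$; the $(1,1)$-entry of $\phi^2+\phi+1=0$ then yields $bc=N(a)-1\geq 0$, while $bc=-b\hat b=-\deg b\leq 0$, forcing $b=c=0$ and $N(a)=1$, hence $a=\zeta_3^{\pm1}$. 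Only at this point do you get diagonal $\ZZ[\zeta_3]$-multiplication on both factors (and, incidentally, this also excludes CM by other fields such as $\QQ(\sqrt{-7})$, where $a=(-1+\sqrt{-7})/2$ passes your trace test but fails the sign constraint on $bc$).

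The second gap is one you flag yourself: the claim that the product polarization together with the signature-$(1,1)$ $\mathcal{O}$-action on $\mathcal{E}_0\times\mathcal{E}_0$ is unique up to isomorphism, and that isomorphism of such structured ppavs translates into $G$-equivalence on $\mathbb{L}_1$, is exactly ``the main obstacle,'' and neither of your suggested routes closes it --- ``should pin the polarization down'' is not an argument, and the alternative route only shows the decomposable locus is a discrete set of CM points, not that it is a single orbit. The paper sidesteps all of this with a modular-forms argument: decomposability of a ppav surface is equivalent to the vanishing of one of the ten even theta constants; the product of all ten, raised to the eighth power, pulls back along $(\iota\circ\eta)_*$ to $2^{48}E_1^8\Delta_6^{12}$; and the valence formula of Proposition \ref{ka sechstel} shows $E_1$ vanishes only along the $G$-orbit of $-\tfrac12+\tfrac{\sqrt{-3}}{6}$ while $\Delta_6$ vanishes only at the cusp, so the decomposable locus is exactly one orbit --- no uniqueness statement about polarized CM structures is ever needed. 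To salvage your approach you must both insert the Rosati computation above and actually prove the structured-uniqueness/moduli statement; the latter is essentially what the theta-constant computation delivers for free.
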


\begin{proof}
If $\Omega$ is diagonalizable by the fractional linear action of an element in $Sp(2;\ZZ)$, then one of the ten even theta constants vanishes. Now let $\Pi(\Omega)$ be the product of all ten theta constants raised to the eighth power. Then the transformation law of theta functions implies
\[
\Pi\big((A\Omega+B)(C\Omega+D)^{-1}\big)=(\det(C\Omega+D))^{40}\Pi(\Omega)
\]
for all $\left(\begin{smallmatrix}A&B\\C&D\end{smallmatrix}\right)\in Sp(2;\ZZ)$.
We have
\[
\Theta\left[\begin{smallmatrix}0&0\\0&0\end{smallmatrix}\right]((\iota\circ\eta)_{*}\tau)=E_{1}(\tau).
\]
The remaining nine even Theta characteristics constitute a single orbit under the full group $G$; however, discarding the action of the roots of unity, they decompose into three orbits, and the $q$--expansion reveals
\[
(\iota\circ\eta)^{*}\left(\Theta\left[\begin{smallmatrix}0&0\\1&1\end{smallmatrix}\right]\Theta\left[\begin{smallmatrix}1&1\\1&1\end{smallmatrix}\right]\Theta\left[\begin{smallmatrix}1&1\\0&0\end{smallmatrix}\right]\right)^{8}=2^{16}\Delta_{6}^{4},
\]
where, for notational convenience, the half-integral characteristic is taken in $(\ZZ/2\ZZ)^{4}$,
and one obtains the same value for the other two orbits. Therefore, the product of all ten even Theta constants raised to the eight power pulls back to yield $2^{48}E_{1}^{8}\Delta_{6}^{12}$; since $\Delta_{6}$ is the normalized cusp form of weight six, the claim follows.
\end{proof}

\begin{prop}
The function
\[
j_G \colon\left(\mathbb{H}_1\cup\QQ P^{1}\right)/G\to\CC\cup\{\infty\}\cong\CC P^{1}
\] given by 
$$ j_G(\tau) = \frac{\lambda}{(-2\kappa)^{2}}(\tau)=\frac{E_{1}^{6}}{4(E_{1}^{6}-4E_{3}(E_{1}^{3}-E_{3}))}(\tau)$$
is a bijection.
\end{prop}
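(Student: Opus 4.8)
The plan is to show that $j_G$ descends to a meromorphic function on the compact Riemann surface $X=(\mathbb{H}_1\cup\QQ P^{1})/G$ and that, viewed as a holomorphic map $X\to\CC P^{1}$, it has degree one; a degree-one map of compact Riemann surfaces is an isomorphism, hence in particular a bijection. Since $\lambda$ and $(-2\kappa)^{2}=4\kappa^{2}$ both lie in $M_{6}^{\CC}(G)\subset M_{*}^{\CC}(G)$, their quotient is a $G$-invariant function of weight zero, and because $\left(\begin{smallmatrix}-1&\\&-1\end{smallmatrix}\right)\in G$ acts trivially on $\mathbb{H}_1$ the quotient $\mathbb{H}_1/G$ agrees as an orbifold with $\mathbb{H}_1/G'$, so the valence formula of Proposition \ref{ka sechstel} applies. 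As $\mathbb{H}_1/G$ has a single cusp $i\infty$, adjoining it yields the compact surface $X$.

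First I would locate the divisors of the building blocks using Proposition \ref{ka sechstel}. For $E_{1}$ (weight $1$) the right-hand side is $\tfrac16$; since $E_{1}$ is holomorphic, is nonzero at $i\infty$ (its $q$-expansion begins with $1$) and at $\tfrac{\sqrt{-3}}{3}$, and vanishes at $-\tfrac12+\tfrac{\sqrt{-3}}{6}$, all of its valence is concentrated at the order-six elliptic point, forcing a simple zero there in the hyperbolic coordinate. Likewise $\kappa$ (weight $3$) has valence $\tfrac12$; its constant term at the cusp is $2\cdot1-1=1\neq 0$, it is nonzero at the order-six point, and it vanishes at $\tfrac{\sqrt{-3}}{3}$, so it has a simple zero at the order-two elliptic point and no others. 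Consequently $\lambda=E_{1}^{6}$ vanishes to order six in the hyperbolic coordinate exactly at $e_{6}:=-\tfrac12+\tfrac{\sqrt{-3}}{6}$, while $4\kappa^{2}$ vanishes to order two exactly at $e_{2}:=\tfrac{\sqrt{-3}}{3}$, and neither vanishes at the cusp, where $j_G(i\infty)=\tfrac14$.

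Next I would translate this into the divisor of $j_G$ on $X$, which is the delicate step because of the elliptic points. Near an order-$m$ elliptic point the uniformizer $w$ on $X$ is $w=t^{m}$ for a centered hyperbolic coordinate $t$. At $e_{6}$ one has $\lambda=c\,t^{6}+\cdots=c\,w+\cdots$ with $\kappa^{2}(e_{6})\neq 0$, so $j_G=(\text{const})\,w+\cdots$ has a simple zero; at $e_{2}$ one has $\kappa^{2}=c'\,t^{2}+\cdots=c'\,w+\cdots$ with $\lambda(e_{2})\neq 0$, so $j_G=(\text{const})/w+\cdots$ has a simple pole. Away from these two points and the cusp both $\lambda$ and $\kappa^{2}$ are nonvanishing, since all of their valence is already accounted for, so $j_G$ is finite and nonzero there and takes the finite value $\tfrac14$ at $i\infty$. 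Hence on $X$ the function $j_G$ has exactly one pole, and that pole is simple.

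Finally, a nonconstant meromorphic function on a compact Riemann surface whose only pole is simple is a degree-one holomorphic map to $\CC P^{1}$, and is therefore an isomorphism; in particular $j_G$ is a bijection, and $X$ has genus zero. The main obstacle is precisely the bookkeeping at the two elliptic points: one must pass to the uniformizer $w=t^{m}$ to see that the order-six zero of $\lambda$ and the order-two zero of $\kappa^{2}$ both become \emph{simple} on $X$, which is what forces the degree to be one rather than six.
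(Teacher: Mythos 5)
Your argument is correct and runs on the same engine as the paper's proof, namely the valence formula of Proposition \ref{ka sechstel}: the paper simply notes that $j_G$ has weight zero and a pole at $\tfrac{\sqrt{-3}}{3}$ (a double pole in the hyperbolic coordinate, contributing $-1$ to the valence), so every translate $j_G-w$ has zero-valence one and hence a unique zero on the compactified quotient, whereas you compute the full divisor of $j_G$ and invoke the degree-one criterion --- the same count, packaged through the uniformizer $w=t^{m}$ at the elliptic points. One small wrinkle: Proposition \ref{ka sechstel} as stated does not literally apply to $E_{1}$, since $E_{1}$ transforms under $G'$ only up to a character (the central $C_{3}$ contributes a factor $\zeta_{3}$ in weight one, and the Fricke-type involution a sign, cf.\ $E_{1}(-\tfrac{1}{3\tau})=-\sqrt{-3}\,\tau E_{1}(\tau)$); either observe that the contour-integral proof of the valence formula is insensitive to a unitary multiplier system, or sidestep the issue by applying the formula to the genuine $G'$-forms $\lambda$ (weight $6$, valence $1$, and ${\rm ord}(\lambda,e_{6})=6\,{\rm ord}(E_{1},e_{6})\geq 6$ already saturates it) and $\kappa$ (weight $3$, valence $\tfrac{1}{2}$, saturated by the known zero at $\tfrac{\sqrt{-3}}{3}$), which yields exactly the divisors your bookkeeping uses.
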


\begin{proof} The function $j_G$ has weight zero and a pole in $\frac{\sqrt{-3}}{3}$. This remains true for any translate $j_G-w$, where $w\in\CC$, hence $j_G-w$ has a unique zero in $\left(\mathbb{H}_1\cup\QQ P^{1}\right)/G$.
\end{proof}
\begin{rmk} $j_G(i\infty)=\frac{1}{4}$, $j_G(-\frac{1}{2} + \frac{\sqrt{-3}}{6}) = 0$ and $j_G(\frac{\sqrt{-3}}{3}) = \infty.$
\end{rmk}

%
%
%
%
%
%
%
%%%%%%
\subsection{TAF via Hyperelliptic Curves}
%%%%%%
%
%
%
%
Recall that a smooth algebraic curve $C$ of genus $n$ has an associated $n$-dimensional principally polarized abelian variety ${\rm Jac}(C)={\rm Pic}^0(C)$, called its Jacobian variety.
Further,  a (smooth) curve $C$ of genus $n$ is hyperelliptic if there is a morphism $C\to \mathbb{P}^{1}$ of degree two. These curves have an affine model given by an equation 
\[
C': \;y^{2}=F_{d}(x),
\]
where $F_{d}$ is a polynomial of degree $2n+1$ or $2n+2$ without multiple roots. Conversely, every such equation yields a model of a hyperelliptic curve of genus $n$ with a unique singularity at the point at infinity that corresponds to one or two points in a regular model, depending on whether the degree $d$ is odd or even, respectively. It is standard practice to model such curves by glueing together two non-singular affine curves $$C= C' \cup C'',$$ one given by the affine model $C'$ above and the other 
\[
C'': v^2 = u^{2n+2}F_{d}({u}^{-1})
\]
by changing coordinates via $x=u^{-1}$ and $y=\tfrac{v}{u^{n+1}}$. 
In the latter model the point(s) at infinity correspond(s) to the point(s) where $u=0$.
\begin{rmk} For an explicit embedding of  the Jacobian of a hyperelliptic curve of genus two in general sextic form, $y^{2}=F_{6}(x)$, into $\mathbb{P}^{15}$ and a description of the corresponding two-dimensional formal group law, see \cite{Flynn:1993aa}.
\end{rmk}
%
%
%
%
%%%%%%%%%%
\subsubsection*{A Family of Hyperelliptic Curves}		\label{SecFamHypCurves}
%%%%%%%%%%
%
%
%
%
It is known that every curve of genus two is hyperelliptic. Clearly, every hyperelliptic curve admits a canonical involution, which, in terms of the equation given above, is given by 
\[
(x,y)\mapsto (x,-y).
\]
Long ago, Bolza determined the curves of genus two with larger automorphism group; relevant to us is \cite[Case IV]{Bolza:1887aa}, i.e.\ hyperelliptic curves of genus two with affine model
\begin{equation}\label{Bolza's family}
y^{2}=x^{6}+ax^{3}+1.
\end{equation}
Such curves admit an obvious automorphism of order three, generated by
\begin{equation}\label{C_{3}--action on the curve}
\mu\colon(x,y)\mapsto(\zeta_{3}x,y);
\end{equation}
for a suitable basis of the first integral homology, the induced action on the extended period matrix is given by the matrix $\sigma(t)J_{N}\sigma(t^{-1})$, which in turn implies that the entries of the normalized period matrix satisfy $\Omega_{11}=\Omega_{22}=2\Omega_{12}$,  i.e.\ $(\Omega_{ij})$ is in the image of $\iota_{*}\colon\mathbb{L}_{1}\to\mathbb{S}_{2}$.

\begin{rmk} The curve with affine model \eqref{Bolza's family} also admits the involution $(x,y)\mapsto(1/x,y/x^{3})$. Taking into account \eqref{C_{3}--action on the curve} and the hyperelliptic involution, we therefore conclude that the automorphism group of this curve contains a subgroup isomorphic to the dihedral group with twelve elements.
\end{rmk}

As a modification of \eqref{Bolza's family}, consider the family
\begin{equation}\label{modular version of Bolza's family}
C' : y^{2}=x^{6}-2\kappa x^{3}+\lambda,
\end{equation}
parametrized using the automorphic forms $\kappa,\lambda\in M_{*}^{\CC}(G')$ introduced earlier. Clearly, smoothness (of the affine model) is equivalent to the non-vanishing of the discriminant of the polynomial $(x^{6}-2\kappa x^{3}+\lambda)$,
\[
\Delta_C = 2^{6}\cdot 3^{6}\cdot\lambda^{2}(\lambda-\kappa^{2})^{2}=2^{10}\cdot3^{12}\cdot\lambda^2\cdot\Delta_6^2.
\]
In this case, the differentials $dx/y$ and $xdx/y$ are holomorphic and span the eigenspaces of the induced $C_{3}$--action. In particular, the second differential is distinguished by $\mu^{*}(xdx/y)=\zeta_{3}^{-1}xdx/y$.

Over the complex numbers, the family \eqref{modular version of Bolza's family} is universal:
\begin{prop}
Evaluating the automorphic forms $\kappa$ and $\lambda$ on the upper half plane induces a bijection between the points of $\mathbb{H}_{1}/G\setminus\{[-\tfrac{1}{2}+\tfrac{\sqrt{-3}}{6}]\}$ and the set of $\CC$--isomorphism classes of curves of genus two whose automorphism group contains the dihedral group of order twelve.
\end{prop}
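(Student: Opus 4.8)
The plan is to establish a bijection by moving between three descriptions of the relevant objects: points of $\mathbb{H}_1/G$, the parameters $(\kappa:\lambda)$ up to the relevant scaling, and $\CC$-isomorphism classes of the curves \eqref{modular version of Bolza's family}. First I would recall that by the discussion preceding \eqref{modular version of Bolza's family}, any curve of genus two whose automorphism group contains the dihedral group of order twelve is, after a coordinate change, of Bolza's form \eqref{Bolza's family}, and conversely the curve \eqref{modular version of Bolza's family} carries the order-three automorphism \eqref{C_{3}--action on the curve} together with the extra involution, so its automorphism group contains $D_{12}$ exactly when the curve is smooth. Thus the target set is precisely the set of $\CC$-isomorphism classes of smooth members of the family $y^2 = x^6 - 2\kappa x^3 + \lambda$, and by the computation of $\Delta_C$ these are the classes with $\lambda \neq 0$ and $\lambda \neq \kappa^2$, i.e.\ with $\Delta_6 \neq 0$ and $\lambda \neq 0$.

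Next I would analyze when two such curves are $\CC$-isomorphic. The key point is that isomorphisms of genus-two curves preserving the $D_{12}$-structure act on the defining sextic by rescaling $x$ and $y$: substituting $x \mapsto u x$, $y \mapsto v y$ sends $x^6 - 2\kappa x^3 + \lambda$ to a scalar multiple of $x^6 - 2\kappa' x^3 + \lambda'$, which forces $(\kappa', \lambda')$ to lie on the same orbit as $(\kappa, \lambda)$ under the scaling $(\kappa, \lambda) \mapsto (t^3 \kappa, t^6 \lambda)$ coming from the weights $3$ and $6$ of $\kappa$ and $\lambda$. Hence the isomorphism class of a smooth curve in the family is determined exactly by the weighted-projective invariant $\lambda/\kappa^2$ (equivalently by $j_G = \lambda/(-2\kappa)^2$), away from the locus $\kappa = 0$ which I would treat as the boundary value $j_G = \infty$. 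This identifies the set of isomorphism classes with $\CC P^1$ minus the two omitted points corresponding to $\lambda = 0$ and $\lambda = \kappa^2$.

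It then remains to match this with $\mathbb{H}_1/G \setminus \{[-\tfrac12 + \tfrac{\sqrt{-3}}{6}]\}$, and here I would invoke the earlier proposition that $j_G \colon (\mathbb{H}_1 \cup \QQ P^1)/G \to \CC P^1$ is a bijection, together with the remark recording its special values $j_G(i\infty) = \tfrac14$, $j_G(-\tfrac12 + \tfrac{\sqrt{-3}}{6}) = 0$, and $j_G(\tfrac{\sqrt{-3}}{3}) = \infty$. Evaluating $\kappa, \lambda$ as automorphic forms on $\mathbb{H}_1$ realizes the map $\tau \mapsto (\kappa(\tau):\lambda(\tau))$, and composing with the invariant above recovers $j_G$; the excluded point $[-\tfrac12 + \tfrac{\sqrt{-3}}{6}]$ is exactly where $\kappa$ vanishes (by the zero computation recorded earlier), which by Proposition \ref{indecomposable period} is precisely the point whose abelian variety is a product rather than a Jacobian of a smooth curve, so it correctly has no smooth curve attached to it. The main obstacle I expect is the bookkeeping in the second step: carefully verifying that \emph{every} $\CC$-isomorphism between two smooth curves of this special type is induced by a scaling of the coordinates (rather than some less obvious automorphism permuting the branch points), so that the coarse moduli really is the weighted-projective quotient and the count of orbits matches the fibers of $j_G$. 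Once that rigidity is in place, combining it with the established bijectivity of $j_G$ and the identification of the excluded point yields the claim.
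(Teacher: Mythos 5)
Your overall route coincides with the paper's: both arguments funnel everything through the invariant $j_G=\lambda/(-2\kappa)^{2}$ and the already-established bijectivity of $j_G\colon(\mathbb{H}_1\cup\QQ P^{1})/G\to\CC P^{1}$. However, the step you yourself single out as the ``main obstacle'' --- that every $\CC$-isomorphism between smooth members of the family is induced by a rescaling of the coordinates, so that isomorphism classes really are the weighted-projective orbits of $(\kappa,\lambda)$ --- is precisely where the paper's entire proof lives: it simply cites \cite[Proposition 2.2]{Cardona:2007aa}, according to which the family $Y^{2}=X^{6}+X^{3}+j$ with $j\in\CC\setminus\{0,\tfrac{1}{4},-\tfrac{1}{50}\}$ bijectively parametrizes the curves with automorphism group isomorphic to the dihedral group of order twelve, and then accounts separately for the two curves with strictly larger symmetry, the singular curve at $j=0$, and the cusp at $j=\tfrac{1}{4}$. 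As written, your proposal defers this rigidity claim rather than proving it, so it is a genuine gap. It is fillable without the citation: when ${\rm Aut}(C)\cong D_{12}$ exactly, the reduced automorphism group is $S_3$, whose $3$-Sylow subgroup is unique, so the M\"obius transformation underlying any isomorphism conjugates the order-three rotation $x\mapsto\zeta_3 x$ to itself; it therefore permutes the fixed-point set $\{0,\infty\}$ and must be of the form $x\mapsto ux$ or $x\mapsto u/x$, and the latter is absorbed by the curve's own extra involution $(x,y)\mapsto(\lambda^{1/3}/x,\,\lambda^{1/2}y/x^{3})$. The two isolated classes with strictly larger automorphism group (at $j_G=\infty$ and $j_G=-\tfrac{1}{50}$) must then be checked by hand. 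Either this argument or the citation is needed to close your second step.

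There is also a concrete factual slip in your final paragraph: the excluded elliptic point $[-\tfrac{1}{2}+\tfrac{\sqrt{-3}}{6}]$ is where $\lambda=E_1^{6}$ vanishes --- consistent with $j_G(-\tfrac{1}{2}+\tfrac{\sqrt{-3}}{6})=0$, which you quote, and with the degenerate member $y^{2}=x^{3}(x^{3}-2\kappa)$ --- not where $\kappa$ vanishes. The paper records $\kappa(\sqrt{-3}/3)=0$; at that point $j_G=\infty$ and the corresponding curve $y^{2}=x^{6}+\lambda$ is smooth with automorphism group strictly containing $D_{12}$, so it must remain inside your bijection rather than be excluded. Your orbit count is otherwise consistent ($\CC P^{1}$ minus $\{0,\tfrac{1}{4}\}$ matching $\mathbb{H}_1/G$ minus the elliptic point), so correcting this identification does not disturb the structure of the argument.
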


\begin{proof}
According to \cite[Proposition 2.2]{Cardona:2007aa}, the family $Y^{2}=X^{6}+X^{3}+j$, where $j\in\CC\setminus\{0,\frac{1}{4},-\frac{1}{50}\}$, gives all the curves (over $\CC$) with automorphism group isomorphic to the dihedral group; adding the two curves with larger symmetry group and the singular curve corresponding to $j=0$, we arrive at $\mathbb{H}_1/G$, and adjoining the cusp (i.e.\  the degenerate curve $j=\frac{1}{4}$) yields the compact space $\CC P^{1}$.
\end{proof}

\bigskip

Note that when we change coordinates and work with $$ C'' : v^2 = 1-2\kappa u^3 + \lambda u^6,$$
the action \eqref{C_{3}--action on the curve} becomes $(u,v)\mapsto(\zeta_{3}^{-1}u,v)$.
Furthermore, the distinguished differential $-xdx/y$ is mapped to
\begin{align} \label{HyperDifferential} 
\frac{du}{v} = \left(1-2\kappa u^3 + \lambda u^6\right)^{-1/2}du,
\end{align}
which for $\lambda=0$ reduces to the invariant differential of an elliptic curve with (complex conjugate) action of $\ZZ[\zeta_{3}]$.
%
%
%
%
%%%%%%%%%
\subsubsection*{The $p$-complete Point of View}		\label{SecHyperPcomplete}
%%%%%%%%%
%
%
%
%
We outline how the formal group of the Jacobian of the curve splits over the $p$-completion of its ring of definition $R$.
Recall that the curve, its Jacobian, thus all differentials and in particular the splitting of the Lie algebra of the Jacobian are defined over $$R= \mathbb{Z}[\zeta_3][\tfrac{1}{6}][ \kappa, \lambda, (\lambda^{2}\Delta_{6}^{2})^{-1}].$$ Inverting ${6}$ and  $(\lambda\Delta_{6})^{2}$ ensures smoothness of the curve, whereas adjoining $\zeta_3$ is necessary when considering the $\mathbb{Z}[\zeta_3]$-actions.

\begin{lem}

$\mathbb{G}_{{\rm Jac}(C)}$ is  a $\ZZ_p$-module.

\end{lem}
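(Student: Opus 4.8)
The plan is to read the statement as asserting that, after $p$-completing the ring of definition, the formal group $\mathbb{G}_{{\rm Jac}(C)}$ becomes a formal $\ZZ_p$-module, and to deduce this from the general principle that every formal group over a $p$-adically complete ring is one. First I would pass from $R$ to its $p$-completion $R^{\wedge}_p$: since $\tfrac16\in R$ and $p\equiv 1\bmod 3$ forces $p\neq 2,3$, the ring $R^{\wedge}_p$ is a $p$-adically complete, separated $\ZZ_p$-algebra over which $\mathbb{G}_{{\rm Jac}(C)}$ is defined. The assertion then amounts to extending the canonical integer-multiplication endomorphisms $[n]$ of the (two-dimensional) formal group to a continuous ring homomorphism $\ZZ_p\to\mathrm{End}(\mathbb{G}_{{\rm Jac}(C)})$.

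To build this extension I would fix $a\in\ZZ_p$, choose integers $a_k\to a$, and set $[a]:=\lim_k[a_k]$. Convergence is the crux. Writing $F$ for the formal group law, any two approximants differ, within the formal group, by $[a_k-a_\ell]$, and if $a_k-a_\ell=p^N b$ then $[a_k-a_\ell]=[b]_F\circ[p^N]_F$; thus everything reduces to showing that $[p^N]_F\to 0$ as $N\to\infty$ in the $(p,X_1,X_2)$-adic topology on $R^{\wedge}_p[[X_1,X_2]]$. This follows from the elementary observation that $[p]_F(\vec X)\equiv 0\pmod{(p,X_1,X_2)^2}$: its linear part $p\cdot\mathrm{id}$ lies in $(p)\cdot(X_1,X_2)$, while all remaining terms have order at least two in the $X_i$. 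Composing $[p]_F$ with itself therefore strictly raises the filtration degree, so $[p^N]_F\in(p,X_1,X_2)^{N+1}$; completeness of $R^{\wedge}_p$ then guarantees that $([a_k])_k$ is Cauchy with a well-defined limit, independent of the chosen sequence, and that $a\mapsto[a]$ is a ring homomorphism.

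I expect the only genuine obstacle to be the careful bookkeeping of this convergence — in particular insisting on honest $p$-adic \emph{completeness} of $R^{\wedge}_p$ rather than mere $p$-localization, and verifying the filtration estimate in the correct topology on the two-variable power series ring. It is worth stressing that the complex-multiplication action of $\ZZ[\zeta_3]$ induced by $\mu$ does \emph{not} shortcut the argument: it embeds $\ZZ[\zeta_3]$, but not $\ZZ_p$, into $\mathrm{End}(\mathbb{G}_{{\rm Jac}(C)})$, so the lemma is genuinely a consequence of $p$-completeness. Once the $\ZZ_p$-structure is in hand it commutes with the $\ZZ[\zeta_3]$-action, and the two combine into a homomorphism $\ZZ[\zeta_3]\otimes_{\ZZ}\ZZ_p\cong\ZZ_p\times\ZZ_p\to\mathrm{End}(\mathbb{G}_{{\rm Jac}(C)})$, whose idempotents will furnish the splitting of the formal group promised in the surrounding discussion.
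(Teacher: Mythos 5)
Your proposal is correct and follows essentially the same route as the paper: approximate $a\in\ZZ_p$ by integers and define $[a]$ as the limit, with convergence secured by the estimate $[p^N]_F\in(p,\vec X)^{N+1}$ --- which is precisely the paper's ``give $p$ and $x$ degree $1$'' inductive argument, phrased as a filtration statement in the $(p,X_1,X_2)$-adic topology. Your version is somewhat more careful (explicit $p$-adic completeness of $R^{\wedge}_p$, well-definedness of the limit, the ring-homomorphism property), but the underlying idea is identical.
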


\begin{proof}

Every $p$-adic number $c$ can be displayed as limit  $$ c= {\rm lim}_n \; c^{(n)} $$ over integers
$c^{(n)}\in \ZZ$. For each $c^{(n)}$ the addition $[c^{(n)}]_{\mathbb{G}_{{\rm Jac}(C)}}(x) = \sum_{i=1}^\infty c_i^{(n)} x^i$ for a point $x\in \mathbb{G}_{{\rm Jac}(C)}$ is well defined. 
Let us define $$ [c]_{\mathbb{G}_{{\rm Jac}(C)}}(x) = \sum_{i=1}^\infty \lim_{n} c_i^{(n)} x^i.$$ This limit exists because from 
\begin{align*}
c^{(n)} - c^{(n-1)} = b^{(n)}p^n \quad \Leftrightarrow \quad c^{(n)} \equiv c^{(n-1)} \mod p^n
\end{align*}
it follows that 
\begin{align*}
[c^{(n)}](x)  =  [c^{(n-1)}](x)  + [p^n]([b^{(n)}](x)).
\end{align*}
Giving $p$ and $x$ degree $1$, one uses an easy inductive argument to see that the second term on the right hand side has degree greater or equal to $n+1$. This show that in the limit $\mathbb{G}_{{\rm Jac}(C)}$ is a $\ZZ_p$-module.

%Let us perform the inductive argument. It is clear that $[p](x) = px + \mathcal{O}(x^2)$ has degree $\geq 2$. The induction assumption is that $[p^{n-1}](x)$ has degree $\geq n$. Now, $$[p^n](x) = [p]([p^{n-1}](x)) = p(p^{n-1}) + \mathcal{O}({\rm deg} = n+1)$$ proves the claim.
\end{proof}

Note that the statement of the lemma implies that  $$\ZZ_p\hookrightarrow {\rm End}(\mathbb{G}_{{\rm Jac}(C)}).$$

\begin{lem}

$\ZZ_p$ is central in ${\rm End}(\mathbb{G}_{{\rm Jac}(C)})$.

\end{lem}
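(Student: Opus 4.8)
The plan is to reduce the centrality statement to the standard fact that the integer multiplication maps $[m]_{\mathbb{G}_{{\rm Jac}(C)}}$, $m\in\ZZ$, lie in the center of ${\rm End}(\mathbb{G}_{{\rm Jac}(C)})$, and then to pass to the limit to cover all of $\ZZ_p$. First I would record why the integer maps are central. Let $F$ denote the formal group law and let $f\in{\rm End}(\mathbb{G}_{{\rm Jac}(C)})$ be arbitrary. By definition $f$ is a homomorphism, so $f(F(x,y))=F(f(x),f(y))$; since $[m]$ is built by iterated application of $F$, namely $[m](x)=F(x,[m-1](x))$ with $[0](x)=0$, an immediate induction on $m$ gives $f([m](x))=[m](f(x))$, i.e.\ $f\circ[m]=[m]\circ f$ as power series for every $m\in\ZZ$.

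Next, I would fix $c\in\ZZ_p$ and write $c=\lim_n c^{(n)}$ with $c^{(n)}\in\ZZ$ as in the previous lemma, so that $[c]=\lim_n[c^{(n)}]$ coefficientwise. For each $n$ we have the identity $f\circ[c^{(n)}]=[c^{(n)}]\circ f$, and I would take the limit $n\to\infty$ of both sides. The content here is that composition of power series is continuous with respect to the degree filtration in which $p$ and $x$ both have degree one: this is exactly the topology in which the previous lemma showed $[c^{(n)}]$ to be Cauchy, the difference $[c^{(n)}]-[c^{(n-1)}]=[p^n]([b^{(n)}](x))$ vanishing to order $\geq n+1$. Because every endomorphism has vanishing constant term, substituting into $f$ (and substituting $f$) does not decrease this order, so $\lim_n\bigl(f\circ[c^{(n)}]\bigr)=f\circ[c]$ and $\lim_n\bigl([c^{(n)}]\circ f\bigr)=[c]\circ f$. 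Passing to the limit therefore yields $f\circ[c]=[c]\circ f$, which is the desired commutativity.

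I expect the only genuine obstacle to be the justification of this interchange of limit and composition; everything else is formal. The clean way to handle it is to note that, under the filtration giving $p$ and $x$ degree one, the ring of power series in $x$ with no constant term is complete under composition and composition is continuous for this topology, so the coefficientwise limit defining $[c]$ is also a limit with respect to composition. Once this is in place, the exact commutation $f\circ[c^{(n)}]=[c^{(n)}]\circ f$ survives the limit, and since $f$ was an arbitrary endomorphism and $c\in\ZZ_p$ arbitrary, this establishes that the image of $\ZZ_p$ is central in ${\rm End}(\mathbb{G}_{{\rm Jac}(C)})$.
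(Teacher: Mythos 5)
Your proposal is correct and follows essentially the same route as the paper's own proof: integer multiplications commute with every endomorphism, and the identity $\psi\circ[c^{(n)}]=[c^{(n)}]\circ\psi$ passes to the limit defining $[c]$. The only difference is that you spell out the continuity of composition in the $(p,x)$-adic filtration, a step the paper leaves implicit with ``taking limits proves the claim.''
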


\begin{proof}

Let again $c= \lim_n c^{(n)} \in \ZZ_p$ and $\psi \in {\rm End}(\mathbb{G}_{{\rm Jac}(C)})$. Since $c^{(n)} \in \ZZ$, we have $$ \psi ([c^{(n)}](x)) = [c^{(n)}](\psi(x)).$$ Taking limits proves the claim.

\end{proof}

The two preceding lemmas imply the following.

\begin{prop}

The action $R \hookrightarrow {\rm End}(\mathbb{G}_{{\rm Jac}(C)})$ extends to $R\otimes \ZZ_p$, i.e.\ \begin{center}
\begin{tikzcd}

R \ar[hook]{rr} \ar[hook]{dr} & &  {\rm End}(\mathbb{G}_{{\rm Jac}(C)}) \\  & R\otimes \ZZ_p \ar[hook, dashed]{ur} & 

\end{tikzcd}
\end{center}
is a commutative diagram of rings.
\qed
\end{prop}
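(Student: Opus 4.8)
The plan is to combine the two preceding lemmas to produce the dashed arrow $R\otimes\ZZ_p \hookrightarrow {\rm End}(\mathbb{G}_{{\rm Jac}(C)})$ and verify that the resulting triangle commutes. First I would recall the universal property of the tensor product $R\otimes_{\ZZ}\ZZ_p$ in the category of rings: since $R$ is a commutative $\ZZ$-algebra, to produce a ring homomorphism out of $R\otimes\ZZ_p$ into a (possibly noncommutative) ring $S:={\rm End}(\mathbb{G}_{{\rm Jac}(C)})$, it suffices to give two ring homomorphisms $f\colon R\to S$ and $g\colon \ZZ_p\to S$ whose images commute elementwise. Here $f$ is the structural action $R\hookrightarrow S$ (which exists because the curve, its Jacobian, and the splitting are defined over $R$), and $g$ is the map $\ZZ_p\hookrightarrow S$, $c\mapsto [c]_{\mathbb{G}_{{\rm Jac}(C)}}$, furnished by the first lemma.

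The key point to check is that the images of $f$ and $g$ commute. This is exactly the content of the second lemma: for $c\in\ZZ_p$ and any $\psi\in S$ we have $\psi\circ[c] = [c]\circ\psi$, and in particular this holds when $\psi = f(r)$ is the endomorphism induced by an element $r\in R$. Thus the universal property applies and yields a unique ring homomorphism $h\colon R\otimes\ZZ_p\to S$ extending $f$ along the canonical inclusion $R\hookrightarrow R\otimes\ZZ_p$, $r\mapsto r\otimes 1$. Commutativity of the triangle is then immediate: $h(r\otimes 1) = f(r)$ by construction, so the composite $R\to R\otimes\ZZ_p\xrightarrow{h} S$ equals the original action $f$.

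It remains to argue that $h$ is injective, as the statement records all three arrows as monomorphisms. Since the original action $f\colon R\to S$ is injective and factors through $h$, and since $\ZZ_p\to S$ is injective by the first lemma, injectivity of $h$ is where a small amount of genuine content lives. I expect this to be the main (though still mild) obstacle: one must rule out new relations created by the tensor product, i.e.\ verify that $R$ is $\ZZ_p$-torsion-free in the relevant sense so that $R\otimes\ZZ_p$ injects into $S$. In our situation $R = \ZZ[\zeta_3][\tfrac{1}{6}][\kappa,\lambda,(\lambda^2\Delta_6^2)^{-1}]$ is a flat $\ZZ$-algebra that is torsion-free, so $R\otimes\ZZ_p$ is simply the $p$-completion-compatible extension of scalars and the structural endomorphisms $[r]$ for $r\in R$ together with the $[c]$ for $c\in\ZZ_p$ generate a copy of $R\otimes\ZZ_p$ inside $S$ with no collapse; concretely, one reads off injectivity on leading coefficients of the associated power series $[\cdot](x)$, since an element of the image acts on $\mathbb{G}_{{\rm Jac}(C)}$ as a power series whose linear coefficient already determines the scalar. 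Assembling these observations gives the commutative diagram of ring monomorphisms claimed in the proposition.
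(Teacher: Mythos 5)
Your proposal is correct and is essentially the paper's own argument: the paper states this proposition with no written proof (it is flagged as an immediate consequence of ``the two preceding lemmas''), and the intended content is precisely your universal-property argument, with the first lemma supplying $\ZZ_p\to{\rm End}(\mathbb{G}_{{\rm Jac}(C)})$ and the second lemma making its image commute with that of $R$. Your closing paragraph on injectivity of the induced map $R\otimes\ZZ_p\to{\rm End}(\mathbb{G}_{{\rm Jac}(C)})$ addresses a point the paper silently absorbs into its hooked arrows, and your leading-coefficient justification is a sensible way to discharge it.
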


\begin{cor}		\label{CorPcompleteSplittingTheFormalGroup}

For (rational) primes which split $p=u\bar{u}$ in the imaginary quadratic number field $\mathbb{K}= \mathbb{Z}[\zeta_3]\otimes \QQ$, the formal group $\mathbb{G}_{{\rm Jac}(C)}$ splits $$\mathbb{G}_{{\rm Jac}(C)} \cong \mathbb{G}^{+} \times \mathbb{G}^{-}.$$ The dimension of the formal group $\mathbb{G}^{-}$ is one.

\end{cor}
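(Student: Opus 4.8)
The plan is to upgrade the ring homomorphism $R\otimes\ZZ_p\to{\rm End}(\mathbb{G}_{{\rm Jac}(C)})$ of the preceding proposition into a pair of orthogonal idempotent endomorphisms, and then to split the formal group along them. The idempotents come from the subring $\ZZ[\zeta_3]\subseteq R$: the order-three automorphism \eqref{C_{3}--action on the curve} realizes $\zeta_3$ inside ${\rm End}(\mathbb{G}_{{\rm Jac}(C)})$, and this is precisely the part of $R$ that becomes reducible after $p$-completion.

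First I would record the arithmetic input. The hypothesis $p=u\bar u$ is equivalent to $p\equiv1\bmod 3$, which is exactly the condition for the minimal polynomial $x^2+x+1$ of $\zeta_3$ to factor into two distinct linear factors over $\ZZ_p$. Hence the Chinese remainder theorem gives $\ZZ[\zeta_3]\otimes\ZZ_p\cong\ZZ_p\times\ZZ_p$, the two factors corresponding to the two primes $u,\bar u$ above $p$. Let $e^+,e^-\in\ZZ[\zeta_3]\otimes\ZZ_p$ be the resulting orthogonal idempotents, so that $e^++e^-=1$ and $e^+e^-=e^-e^+=0$. Through the inclusions $\ZZ[\zeta_3]\otimes\ZZ_p\subseteq R\otimes\ZZ_p\hookrightarrow{\rm End}(\mathbb{G}_{{\rm Jac}(C)})$ they become idempotent endomorphisms of the formal group satisfying these same relations.

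Next I would split the formal group. Setting $\mathbb{G}^\pm:=e^\pm\,\mathbb{G}_{{\rm Jac}(C)}$, the two maps $x\mapsto(e^+x,e^-x)$ and $(a,b)\mapsto a+_{\mathbb{G}}b$ (formal addition) are mutually inverse: one composite is $e^++e^-=1$ by additivity of the endomorphisms, and the other is the identity because each $e^\pm$ is a homomorphism with $(e^\pm)^2=e^\pm$ and $e^+e^-=e^-e^+=0$. This exhibits $\mathbb{G}_{{\rm Jac}(C)}\cong\mathbb{G}^+\times\mathbb{G}^-$. The dimension of $\mathbb{G}^-$ is then read off from the Lie algebra: the cotangent space of $\mathbb{G}_{{\rm Jac}(C)}$ is spanned by the holomorphic differentials $dx/y$ and $xdx/y$, which (as recorded after \eqref{modular version of Bolza's family}) are eigenvectors for $\zeta_3$ with the two \emph{distinct} eigenvalues $\zeta_3$ and $\bar\zeta_3=\zeta_3^{-1}$; these differ by the unit $\sqrt{-3}$, which is invertible in $R$. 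Dually the Lie algebra splits into two rank-one eigenlines, so after tensoring with $\ZZ_p$ each idempotent $e^\pm$ cuts out a free rank-one summand. Hence both $\mathbb{G}^+$ and $\mathbb{G}^-$ have one-dimensional Lie algebra; in particular $\dim\mathbb{G}^-=1$.

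The main obstacle is the middle step: justifying that $\mathbb{G}^\pm=e^\pm\,\mathbb{G}_{{\rm Jac}(C)}$ is genuinely a formal group of the dimension predicted by its Lie algebra, and not merely a formal scheme. The clean route is to change coordinates on $\mathbb{G}_{{\rm Jac}(C)}$ so that the linear part of $e^+$ becomes the diagonal projection $\diag(1,0)$ adapted to the eigenline decomposition, and then to invoke the standard splitting of a formal group equipped with an action of a ring that decomposes as a product, under which the image of an idempotent endomorphism with free rank-one Lie-algebra part is a one-dimensional formal group. This is the same mechanism used to extract the one-dimensional summand in the work of Behrens--Lawson, and over the $p$-complete base $R\otimes\ZZ_p$ it presents no further difficulty once the idempotents $e^\pm$ are in hand.
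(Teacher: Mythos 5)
Your proposal is correct and takes essentially the same route as the paper: the splitting comes from the orthogonal idempotents produced by $\mathbb{Z}[\zeta_3]\otimes\mathbb{Z}_p\cong\mathbb{Z}_p\times\mathbb{Z}_p$ acting through $R\otimes\mathbb{Z}_p\hookrightarrow{\rm End}(\mathbb{G}_{{\rm Jac}(C)})$, and the dimension of $\mathbb{G}^-$ is read off from the $\zeta_3$-eigenspace decomposition of the two holomorphic differentials, dual to the Lie algebra. You merely supply more detail than the paper does (the explicit mutually inverse maps and the justification that the idempotent image is a formal group of the expected dimension), which the paper leaves implicit.
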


\begin{proof}

We find idempotents in ${\rm End}(\mathbb{G}_{{\rm Jac}(C)})$, coming from the factorization of $$1= (e_1 , e_2)\in \mathbb{Z}[\zeta_3]\otimes \ZZ_p \cong \ZZ_p \times \ZZ_p,$$ giving rise to the indicated splitting. %The isomorphism above is a direct application of Chinese Remainder. % Take the limit over the inverse system of isomorphisms given by CRT. 
The dimensions of the formal groups are determined by the dimension of the Lie algebras, which are by duality determined by the $\zeta_3$-action on the space differentials. As described above the action distinguishes the one (in fact both, as there are only two) differential which was acted on by conjugation.
\end{proof}

In what follows we use the presentation of the curve to not only refine the above argument to a $p$-local statement, but also give an explicit description of the associated $p$-local genus.

%
%
%
%%%%%%%%%
\subsubsection*{Formal Parameters}
%%%%%%%%%
%
%
%
%

As explained above the curve $C$ has two (smooth) points at infinity, since the degree of the describing polynomial is even. Let us consider the point corresponding to $P=(0,1)$ on $C''$. This point is a fix point of the $\zeta_3$-action and only the distinguished differential is non-vanishing. This implies that the Torelli map identifies $ T_PC'' $ $C_3$-equivariantly with the $1$-dimensional summand ${\rm Lie}^-({\rm Jac}(C''))$.

Finding a local parameter at the point at infinity of $C$ amounts to finding a local parameter at $P$ on $C''$. 

Let $R= \mathbb{Z}[\zeta_3][\tfrac{1}{6}][ \kappa, \lambda, (\lambda^{2}\Delta_{6}^{2})^{-1}]$, then the coordinate ring of $C'' $ is $$\mathcal{O}_{C''} = \raisebox{1ex}{\textit{R}[\textit{u,v}]\big/}\!\! \left( v^2-(1-2\kappa u^3 + \lambda u^6)\right). $$ The point $P$ is represented by the ideal $I=(u,v-1)$. Since $I$ is prime, the localization $\mathcal{O}_{C'', (P)}$ is a local ring with maximal ideal $\mathfrak{m}=I$. Now, since 
\begin{align}
v^2&=1-2\kappa u^3 + \lambda u^6 \nonumber \\ 
%\Leftrightarrow \; v^2-1 &= u^3\cdot p(u^3) \nonumber \\ 
\Leftrightarrow \; (v-1)(v+1) &= u^3\cdot p(u^3),  \label{LocParU(1,1)}
\end{align}
where $p\in \mathbb{Z}[x]$ and $(v+1)\not \in \mathfrak{m}$ is a unit, we find that  $$(v-1) \subset (u).$$ So the maximal ideal is given by $(u)$ and thus $u$ is a uniformizer as desired.
%
%
%
%
%%%%%%%%%
\subsubsection*{A Formal Logarithm}			\label{SectionU11FormalLog}
%%%%%%%%%
%
%
%
%

We define a rational genus
\[
\varphi^{L}\colon MU_{*}^{\QQ}\to M_{*}^{\QQ}(G')\cong\QQ[\kappa,\lambda]
\]
by
\[
\text{log}_{\varphi^{L}}'(u)=(1-2\kappa u^{3}+\lambda u^{6})^{-{1}/{2}}. 
\]
In other words, we define the logarithm of our genus to be the integral $\int \frac{du}{v}$ of the distinguished differential. Recall that 
\[
(1-2\kappa t+\lambda t^{2})^{-1/2}=\sum_{k\geq0}P_{k}(\kappa,\lambda)t^{k}.
\]
is the generating function of the homogeneous Legendre polynomials $P_{k}$. Consequently, the value of $\varphi^{L}$ on the complex projective spaces can be expressed in terms of Legendre polynomials.
%thus, the genus $\varphi^{L}$ is determined by $\varphi^{L}[\CC P^{l}]=P_{l/3}(\kappa,\lambda)$ if $3|l$ and zero otherwise.
%

We have the following integrality statement:
\begin{thm}		\label{ThmHyperIntegrality}
For each prime $p\equiv1\!\mod3$, the image of $BP_*$ under $\varphi^{L}$ is contained in the subring of $p$-local automorphic forms:
\[
\varphi^L(BP_*) \subset M^{\mathbb{Z}_{(p)}}_*(G) \cong \mathbb{Z}_{(p)}[\kappa^2, \lambda] \subset \mathbb{Z}_{(p)}[\kappa, \lambda].
\]
\end{thm}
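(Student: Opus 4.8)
The plan is to read the $p$-typical formal group law of $\varphi^L$ off the geometry of the curve, to borrow the integrality of the Jacobian's formal group over the smooth locus, and then to descend to $\mathbb{Z}_{(p)}[\kappa,\lambda]$ by a content argument; the even-$\kappa$ refinement will then be forced by the internal grading. First I would unwind the Legendre generating function to get $\log_{\varphi^L}(u)=\sum_{k\ge0}P_k(\kappa,\lambda)\,u^{3k+1}/(3k+1)$, so that $\varphi^L([\mathbb{C}P^{3k}])=P_k(\kappa,\lambda)$ and $\varphi^L([\mathbb{C}P^{n}])=0$ for $3\nmid n$. Since $p\equiv1\bmod3$, every prime power satisfies $p^{n}=3k_n+1$ with $k_n=(p^{n}-1)/3$, so the associated $p$-typical logarithm is $\sum_{n\ge0}(P_{k_n}(\kappa,\lambda)/p^{n})\,u^{p^{n}}$. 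Consequently the restriction $\varphi^L|_{BP_*}$ is exactly the classifying map of the $p$-typicalization $F^{(p)}$ of the one-dimensional formal group law $F^{L}$ defined by $\log_{\varphi^L}$, and all coefficients of $F^{(p)}$ manifestly lie in $\mathbb{Q}[\kappa,\lambda]$.

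The heart of the argument is to show that $F^{(p)}$ is $p$-integral, and here I would use the machinery of the preceding sections. Over the smooth locus, after $p$-completion so that the idempotents of Corollary~\ref{CorPcompleteSplittingTheFormalGroup} become available, work over $S=\mathbb{Z}_{(p)}[\zeta_3][\kappa,\lambda,\Delta_C^{-1}]$; there the line ${\rm Lie}^{-}$ integrates to the one-dimensional summand $\mathbb{G}^{-}$. Since $P=(0,1)$ is a fixed point of the $\zeta_3$-action at which only the distinguished differential is nonzero, the derivative at $P$ of the Abel--Jacobi map followed by $e^{-}$ is the isomorphism $T_PC''\cong{\rm Lie}^{-}$, so this composite restricts to a formal isomorphism $\widehat{C''}_{P}\xrightarrow{\;\sim\;}\mathbb{G}^{-}$ carrying the uniformizer $u$ to a coordinate on $\mathbb{G}^{-}$ and $du/v$ to its invariant differential. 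Under this identification $F^{L}$ is precisely the formal group law of $\mathbb{G}^{-}$ in the coordinate $u$; hence $F^{L}$, and therefore its $p$-typicalization $F^{(p)}$, has coefficients in $S$.

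It then remains to descend from $S$ to $\mathbb{Z}_{(p)}[\kappa,\lambda]$ and to prove the parity refinement. Each $\varphi^L(v_i)$ lies in $\mathbb{Q}[\kappa,\lambda]\cap S$: it is a genuine polynomial over $\mathbb{Q}$ (carrying neither $\zeta_3$ nor $\Delta_C^{-1}$) that becomes $p$-integral after inverting $\Delta_C$. Because $\mathbb{Z}[\zeta_3]\cap\mathbb{Q}=\mathbb{Z}$, and because for $p\ge7$ the discriminant $\Delta_C=2^{6}3^{6}\lambda^{2}(\lambda-\kappa^{2})^{2}$ has $p$-unit content, a Gauss-lemma/content computation gives $\mathbb{Q}[\kappa,\lambda]\cap S=\mathbb{Z}_{(p)}[\kappa,\lambda]$, so $\varphi^L(v_i)\in\mathbb{Z}_{(p)}[\kappa,\lambda]$ and thus $\varphi^L(BP_*)\subset\mathbb{Z}_{(p)}[\kappa,\lambda]$. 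For the even-$\kappa$ statement I would argue by weights: since $p\equiv1\bmod6$ (the prime is odd and $\equiv1\bmod3$), the degree $2(p^{i}-1)$ of $v_i$ is divisible by $12$, and as $|\kappa|=6$, $|\lambda|=12$, only monomials $\kappa^{a}\lambda^{b}$ with $a$ even have degree divisible by $12$; hence $\varphi^L(v_i)\in\mathbb{Z}_{(p)}[\kappa^{2},\lambda]$, and since the $v_i$ generate $BP_*$ we conclude $\varphi^L(BP_*)\subset\mathbb{Z}_{(p)}[\kappa^{2},\lambda]\cong M^{\mathbb{Z}_{(p)}}_*(G)$.

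I expect the main obstacle to be the geometric identification in the second paragraph: rigorously matching $F^{L}$ — defined purely formally through the integral $\int du/v$ over $\mathbb{Q}[\kappa,\lambda]$ — with the honest formal group law of the summand $\mathbb{G}^{-}$ in the uniformizer $u$. This requires checking that $u$ is an integral coordinate at $P$ and that $du/v$ generates the invariant differentials of $\mathbb{G}^{-}$ over $S$, together with a careful handling of the $\zeta_3$-action and the $p$-completion needed to produce the idempotent $e^{-}$; the subsequent descent and grading steps are then routine.
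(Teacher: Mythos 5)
Your proposal is correct and follows essentially the same route as the paper: it likewise identifies the rational group law of $\varphi^L$ with the formal group of the split summand $\mathbb{G}^-$ by pulling back along the parameter $u$ at $P=(0,1)$ and matching $du/v$ with the invariant differential on the nose, then descends to $\mathbb{Z}_{(p)}[\kappa,\lambda]$ via an arithmetic fracture square (your intersection/content computation plays exactly this role), and obtains $\mathbb{Z}_{(p)}[\kappa^2,\lambda]$ from the same $p\equiv 1\bmod 6$ grading argument. One small correction, which you yourself anticipate in your closing paragraph: the splitting idempotents only exist over the $p$-completion $\mathbb{Z}[\kappa,\lambda,\Delta_C^{-1}]^{\wedge}_p$ (the paper's choice), not over your $S=\mathbb{Z}_{(p)}[\zeta_3][\kappa,\lambda,\Delta_C^{-1}]$, which is a domain and hence has no nontrivial idempotents.
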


\begin{proof}
We prove the claim making use of an arithmetic fracture square
\begin{center}
\begin{tikzcd}
\mathbb{Z}_{(p)} \ar{d} \ar{r} & \mathbb{Z}_p \ar{d} \\ \mathbb{Q} \ar{r} & \mathbb{Q}_p.
\end{tikzcd}
\end{center}
On the one hand, we have the formal group law over $\QQ[\kappa,\lambda]$ determined by $\varphi^{L}$. On the other hand, we have the group law over $\mathbb{Z}[\kappa,\lambda, \Delta_C^{-1}]^\wedge_p$ (which already contains $\tfrac{1}{6}$ and $\zeta_3$) arising from the split summand of the Jacobians of the family \eqref{modular version of Bolza's family}. Recall that the point $P=(0,1)\in C''$ is a fixed point of the $C_{3}$--action, and that we have an equivariant identification $$T_{P}C''\cong {\rm Lie}^-({\rm Jac}(C'')).$$ 
Choosing a local parameter $u$, as above, at the point  $P\in C''$, the parametrization $(u, v(u))$ allows us to pull back the $1$-dimensional summand of $\mathbb{G}_{{\rm Jac}(C)}$ to a formal neighborhood of the point $P$ in $C''$. In particular, the pullback of the formal differential of the $1$-dimensional summand agrees with the coordinate expression of \eqref{HyperDifferential} on the nose. This shows that the formal group law is already  defined over $\ZZ_{(p)}[\kappa,\lambda]$.

Moreover, since $p\equiv1\!\mod3$ and $p$ prime implies $p\equiv1\!\mod6$, we have that $\varphi^{L}(BP_{*})$ is contained in the smaller subring $\ZZ_{(p)}[\kappa^{2},\lambda]\subset\ZZ_{(p)}[\kappa,\lambda]$ concentrated in degrees divisible by twelve.
\end{proof}
\begin{rmk}		\label{PropGenusDegEll} 
In the only elliptic point $[\tau]=[-\tfrac{1}{2}+\tfrac{\sqrt{-3}}{6}]\in\mathbb{H}_{1}/G$, corresponding to the product ppav $E\times\bar E$, the formal group law associated to the genus $\varphi^L$ restricts to an elliptic group law.
\end{rmk}

%
%
%
%
%%%%%%%%%
\subsubsection*{$p$-local TAF}
%%%%%%%%%
%
%
%
%
Having established integrality, we can give explicit descriptions of $p$--local TAF theories by verifying Landweber's criterion \cite{Landweber:1976lo}. For a fixed prime $p\equiv1\!\mod3$, let $v_{k}$ denote the image of the $k^{th}$ Hazewinkel generator  \cite[Appendix A2]{Ravenel:2004xh} under $\varphi^{L}$; in particular, we have
\[
v_{1}=P_{\frac{p-1}{3}}(\kappa,\lambda),\quad v_{2}=\frac{1}{p}\left(P_{\frac{p^{2}-1}{3}}(\kappa,\lambda)-P_{\frac{p-1}{3}}^{p+1}(\kappa,\lambda)\right).
\]
Recall that $\Delta_{6}$ is the normalized cusp form of weight six.
\begin{cor}\label{height two}
Let $p=7$. Then $\varphi^{L}$ gives $M_{*}^{\ZZ_{(7)}}(G)[\Delta_{6}^{-1}]$ the structure of a Landweber exact $BP_*$-algebra of height two. In particular, the functors
\[
TAF^{U(1,1;\ZZ[\zeta_3])}_{(7),*}(\cdot):=BP_{*}(\cdot)\otimes_{\varphi}M^{\ZZ_{(7)}}_{*}(G)[\Delta_{6}^{-1}]
\]
define a homology theory.
\end{cor}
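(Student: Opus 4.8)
The plan is to verify Landweber's exactness criterion \cite{Landweber:1976lo} for the $BP_*$-algebra structure supplied by $\varphi^{L}$ together with Theorem \ref{ThmHyperIntegrality}, and then to invoke the Landweber exact functor theorem to conclude that the functor defines a homology theory. First I would rewrite the coefficient ring in convenient coordinates: since $\kappa^{2}=\lambda-2^{2}\cdot3^{3}\,\Delta_{6}$, we have $M_{*}^{\ZZ_{(7)}}(G)\cong\ZZ_{(7)}[\kappa^{2},\lambda]=\ZZ_{(7)}[\lambda,\Delta_{6}]$, so after inverting the cusp form we are working over
\[
A:=M_{*}^{\ZZ_{(7)}}(G)[\Delta_{6}^{-1}]\cong\ZZ_{(7)}[\lambda,\Delta_{6}^{\pm1}].
\]
Because $A$ is a $\ZZ_{(7)}$-algebra, every prime $q\neq7$ acts invertibly and the Landweber conditions at $q$ are vacuous, so only $p=7$ needs attention. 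There, for height two it suffices to check that $7$ and $v_{1}$ form a regular sequence on $A$ and that $v_{2}$ becomes a unit in $A/(7,v_{1})$, as this forces $A/(7,v_{1},v_{2})=0$ and renders the remaining conditions on $v_{k}$, $k\geq3$, vacuous.

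The first two conditions are straightforward. The ring $A$ is $7$-torsion-free since it is a Laurent polynomial ring over $\ZZ_{(7)}$. For $v_{1}$ I would use the explicit formula $v_{1}=P_{2}(\kappa,\lambda)=\tfrac12(3\kappa^{2}-\lambda)$; substituting $\kappa^{2}=\lambda-108\,\Delta_{6}$ gives $v_{1}=\lambda-162\,\Delta_{6}$, and since $162\equiv1$ mod $7$ we obtain
\[
v_{1}\equiv\lambda-\Delta_{6}\pmod 7 .
\]
In the domain $A/7A\cong\mathbb{F}_{7}[\lambda,\Delta_{6}^{\pm1}]$ this element is a non-zero-divisor, and
\[
A/(7,v_{1})\cong\mathbb{F}_{7}[\Delta_{6}^{\pm1}],
\]
obtained by setting $\lambda=\Delta_{6}$, whence $\kappa^{2}\equiv5\,\Delta_{6}$.

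The heart of the matter, and the step I expect to be the main obstacle, is showing that the image of $v_{2}=\tfrac17\big(P_{16}(\kappa,\lambda)-P_{2}(\kappa,\lambda)^{8}\big)$ is a unit in $\mathbb{F}_{7}[\Delta_{6}^{\pm1}]$. By weight considerations $v_{2}$ is homogeneous of weight $48$, and $\Delta_{6}$ has weight $6$, so its reduction is forced to be $c\,\Delta_{6}^{8}$ for some $c\in\mathbb{F}_{7}$; the entire problem is to see that $c\neq0$. Rather than grind through the two Legendre polynomials modulo $7$, I would argue geometrically. On the locus $\{v_{1}=0\}=\{\lambda=\Delta_{6}\}$ we have $\Delta_{6}\neq0$, hence $\lambda\neq0$, so $\Delta_{C}=2^{10}3^{12}\lambda^{2}\Delta_{6}^{2}$ is invertible and the corresponding curves are smooth with honest abelian surface Jacobians. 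By Corollary \ref{CorPcompleteSplittingTheFormalGroup} the one-dimensional summand $\mathbb{G}^{-}$ is a genuine formal group whose height is bounded by the dimension of the abelian surface carrying its $\ZZ[\zeta_{3}]$-action, i.e.\ height $\leq2$, while $v_{1}\equiv0$ forces height $\geq2$. Thus $\mathbb{G}^{-}$ has height exactly two at every point of $\{v_{1}=0\}$, so $v_{2}$ cannot vanish there; were $c=0$ we would have $v_{2}\equiv0$ mod $(7,v_{1})$ and height $\geq3$ at each supersingular point, a contradiction. Hence $c\neq0$ and $v_{2}$ is a unit in $A/(7,v_{1})$.

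Assembling these facts, $(7,v_{1},v_{2},\dots)$ is Landweber-regular with $A/(7,v_{1},v_{2})=0$, so $A$ is a Landweber exact $BP_{*}$-algebra of height two, and the Landweber exact functor theorem produces the homology theory $TAF^{U(1,1;\ZZ[\zeta_{3}])}_{(7),*}(\cdot)$. As an independent consistency check one could recover $v_{2}\equiv c\,\Delta_{6}^{8}$ with $c\neq0$ by a direct computation of the Legendre polynomials $P_{16}$ and $P_{2}^{8}$ modulo $7$ after imposing $\lambda=\Delta_{6}$ and $\kappa^{2}=5\Delta_{6}$, which also makes the geometric input unnecessary.
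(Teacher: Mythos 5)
Your proposal is correct, and for the routine parts it coincides with the paper: injectivity of multiplication by $7$, the computation $v_{1}=\lambda-162\,\Delta_{6}\equiv\lambda-\Delta_{6}\bmod 7$, and the identification $A/(7,v_{1})\cong\mathbb{F}_{7}[\Delta_{6}^{\pm1}]$ all match. The divergence is in the key step, the unit-ness of $v_{2}$ mod $(7,v_{1})$. The paper stays computational: knowing (as you observe via homogeneity) that $v_{2}$ reduces to $c\,\lambda^{8}=c\,\Delta_{6}^{8}$ on the locus $v_{1}=0$, it pins down $c$ by evaluating at the point $(\kappa,\lambda)=(\tfrac{1}{\sqrt3},1)$ of that locus, finding $\bigl(P_{16}(\tfrac{1}{\sqrt3},1)-P_{2}^{8}(\tfrac{1}{\sqrt3},1)\bigr)/7=-\tfrac{19\cdot113}{2^{7}\cdot3^{6}}\equiv1\bmod 7$, hence $v_{2}\equiv\Delta_{6}^{8}$. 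You instead argue geometrically: on $\{v_{1}=0\}$ one has $\lambda=\Delta_{6}$ invertible, so $\Delta_{C}=2^{10}3^{12}\lambda^{2}\Delta_{6}^{2}$ is invertible, every geometric point carries a smooth curve, and the one-dimensional split summand of its Jacobian has height at most two while $v_{1}=0$ forces height at least two; so $v_{2}$ cannot vanish identically and $c\neq0$. Both routes are sound. The paper's computation buys the precise value $c\equiv1$; your argument buys conceptual uniformity, and is in effect the specialization of the general height-bound argument of \cite{Behrens:2010aa} that the paper itself invokes in a closing remark to assert regularity of $(p,v_{1},v_{2})$ for all $p\equiv1\bmod3$ once $\Delta_{C}$ is inverted.

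One caveat on attribution: the bound ${\rm ht}(\mathbb{G}^{-})\leq2$ is not contained in Corollary \ref{CorPcompleteSplittingTheFormalGroup} as you cite it; that corollary gives only the splitting and $\dim\mathbb{G}^{-}=1$. You additionally need that the $u$-primary summand $A[u^{\infty}]$ of the height-four $p$-divisible group of the abelian surface has height exactly $2$, i.e., that the two idempotent summands have equal height; this follows from Cartier duality together with the compatibility of the principal polarization with the $\ZZ[\zeta_{3}]$-action (the Rosati involution inducing conjugation, so that $A[\bar{u}^{\infty}]\cong(A[u^{\infty}])^{\vee}$), as in \cite{Behrens:2010aa}. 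With that standard fact supplied, your proof is complete --- and, as you note yourself, the direct mod-$7$ Legendre computation (which is exactly the paper's route) is available as a fallback that makes the geometric input unnecessary.
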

\begin{proof}
Clearly, multiplication by 7 is injective. Since $P_{2}(x,1)=\frac{1}{2}(3x^{2}-1)$, it follows that
\[
v_{1}=E_{1}^{6}-6(E_{1}^{3}E_{3}-E_{3}^{2})\equiv \lambda-\Delta_{6}\mod7,
\]
which is regular on $\mathbb{F}_{7}[\lambda,\Delta_{6}]$. Now from $(P_{16}(\frac{1}{\sqrt3},1)-P_{2}^{8}(\frac{1}{\sqrt3},1))/7=-\frac{19\cdot 113}{2^{7}\cdot 3^{6}}$ and  $-\frac{19\cdot 113}{2^{7}\cdot 3^{6}}\equiv1\mod7$ we conclude
\[
v_{2}\equiv-\frac{19\cdot 113}{2^{7}\cdot 3^{6}}\lambda^{8}\equiv\Delta_{6}^{8}\mod(7,v_{1});
\]
since $\Delta_{6}$ is invertible, the claim follows.
\end{proof}

\begin{cor}\label{height two p=13}
Let $p=13$. Then $\varphi^{L}$ gives $M_{*}^{\ZZ_{(13)}}(G)[\Delta_{6}^{-1}]$ the structure of a Landweber exact $BP_*$-algebra of height two. In particular, the functors
\[
TAF^{U(1,1;\ZZ[\zeta_3])}_{(13),*}(\cdot):=BP_{*}(\cdot)\otimes_{\varphi}M^{\ZZ_{(13)}}_{*}(G)[\Delta_{6}^{-1}]
\]
define a homology theory.
\end{cor}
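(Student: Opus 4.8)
The plan is to verify Landweber's criterion \cite{Landweber:1976lo} for the sequence $(13, v_1, v_2)$ on the ring $M_*^{\ZZ_{(13)}}(G)[\Delta_6^{-1}] \cong \ZZ_{(13)}[\lambda, \Delta_6^{\pm 1}]$, following the proof of Corollary \ref{height two} step for step. Multiplication by $13$ is visibly injective, since this ring is free over $\ZZ_{(13)}$. For the next step, recall that now $v_1 = P_4(\kappa,\lambda) = \tfrac18(35\kappa^4 - 30\kappa^2\lambda + 3\lambda^2)$; substituting the relation $\kappa^2 = \lambda - 2^2\cdot 3^3\,\Delta_6$ and reducing modulo $13$ gives
\[
v_1 \equiv \lambda^2 - 7\lambda\Delta_6 + 5\Delta_6^2 \equiv (\lambda + \Delta_6)(\lambda - 8\Delta_6) \pmod{13},
\]
which is a nonzero element of the domain $\mathbb{F}_{13}[\lambda,\Delta_6^{\pm1}]$ and hence a non--zero--divisor.

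The decisive step is to show that $v_2 = \tfrac{1}{13}\bigl(P_{56}(\kappa,\lambda) - P_4^{14}(\kappa,\lambda)\bigr)$ becomes a unit modulo $(13, v_1)$. Since $P_4 = v_1 \equiv 0$ on this locus, the term $P_4^{14}$ drops out and $v_2 \equiv \tfrac{1}{13}P_{56}(\kappa,\lambda)$. In contrast to the case $p=7$, where $v_1 = P_2$ contributes a single linear factor, here $v_1$ splits into the two coprime factors above, so by the Chinese remainder theorem $\mathbb{F}_{13}[\lambda,\Delta_6^{\pm1}]/(v_1)$ is a product of two Laurent--polynomial rings $\mathbb{F}_{13}[\Delta_6^{\pm1}]$, corresponding to $\lambda \equiv -\Delta_6$ and $\lambda \equiv 8\Delta_6$, i.e.\ (after normalizing $\lambda = 1$) to the two roots $\kappa^2 \equiv 5$ and $\kappa^2 \equiv 7$ of $P_4(\,\cdot\,,1)$ modulo $13$. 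On each factor $v_2$ is homogeneous of weight $p^2-1 = 168$, hence of the form $c_i\,\Delta_6^{28}$; the claim will follow once I check $c_1, c_2 \in \mathbb{F}_{13}^\times$, for then $v_2$ is invertible after inverting $\Delta_6$, so $M_*^{\ZZ_{(13)}}(G)[\Delta_6^{-1}]/(13, v_1, v_2) = 0$ and the sequence is regular.

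To extract the constants I would, as in the $p=7$ computation, evaluate the exact (here quadratic--irrational) value $\tfrac{1}{13}P_{56}$ at the two roots $w = \tfrac{15 \pm 2\sqrt{30}}{35}$ of the even part $35w^2 - 30w + 3$ of $P_4$; since $30 \equiv 2^2 \pmod{13}$, the prime $13$ splits in $\QQ(\sqrt{30})$ and the two conjugate values reduce to $c_1$ and $c_2$ in $\mathbb{F}_{13}$. Equivalently, and avoiding irrationalities, one may evaluate the integral form $v_2$ at the integer points $(\lambda,\Delta_6) = (-1,1)$ and $(8,1)$, where $P_4$ is divisible by $13$ so that $\tfrac{1}{13}P_4^{14} \equiv 0 \pmod{13}$, and read off $c_i = v_2 \bmod 13$ directly. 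The main obstacle is exactly this evaluation: the Legendre polynomial $P_{56}$ has degree $56$, so in place of the single number $\tfrac17 P_{16}(\tfrac{1}{\sqrt3},1) = -\tfrac{19\cdot 113}{2^7\, 3^6} \equiv 1$ used for $p=7$, one must control a far larger expansion and confirm non--vanishing modulo $13$ on both components at once. Once $c_1, c_2$ are verified to be nonzero modulo $13$, $v_2$ is a unit modulo $(13, v_1)$, Landweber's criterion holds, and the functors define the asserted height--two homology theory.
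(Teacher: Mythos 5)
Your framework is exactly the paper's: verify Landweber's criterion for the sequence $(13,v_1,v_2)$ on $M_*^{\ZZ_{(13)}}(G)[\Delta_6^{-1}]\cong\ZZ_{(13)}[\lambda,\Delta_6^{\pm1}]$, and your intermediate computations are all correct. Indeed $v_1=P_4(\kappa,\lambda)\equiv 6\kappa^4+6\kappa^2\lambda+2\lambda^2 \bmod 13$, which under $\kappa^2=\lambda-2^2\cdot 3^3\Delta_6$ becomes $(\lambda+\Delta_6)(\lambda-8\Delta_6)$; the two factors are comaximal once $\Delta_6$ is inverted, the CRT splitting into the branches $\lambda\equiv-\Delta_6$ and $\lambda\equiv 8\Delta_6$ is valid, and the weight count correctly forces $v_2\equiv c_i\Delta_6^{28}$ on each branch. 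Your evaluation trick (integral points where $P_4\equiv 0\bmod 13$, so that $\tfrac{1}{13}P_4^{14}\equiv 0\bmod 13^{13}$) is also sound, and it repairs the otherwise illegitimate step of "dividing a mod-$13$ congruence by $13$" in the phrase ``the term $P_4^{14}$ drops out.''

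However, the proposal stops exactly where the theorem lives: you never establish $c_1,c_2\in\mathbb{F}_{13}^{\times}$, you only describe how one would. This is not a formality. If, say, $c_1=0$ while $c_2\neq 0$, then $v_2$ would be a zero divisor in the product ring $\mathbb{F}_{13}[\Delta_6^{\pm1}]\times\mathbb{F}_{13}[\Delta_6^{\pm1}]$ and Landweber's criterion would fail for this presentation; the height-two assertion \emph{is} the non-vanishing of these two constants, and nothing structural guarantees it a priori --- note that the Schur-type congruence $P_{4\cdot 13+4}\equiv P_4^{14}\bmod 13$ shows that $v_2\bmod 13$ is genuinely first-order information that no pure congruence formalism supplies for free. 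The paper closes this by the computed congruence $v_2\equiv\lambda^{28}\equiv\Delta_6^{28}\bmod(13,v_1)$; observe that this single formula is consistent on both of your branches precisely because $(-1)^{28}=1$ and $8^{28}=(8^2)^{14}\equiv(-1)^{14}=1\bmod 13$, i.e.\ $c_1=c_2=1$. So your plan would succeed if carried out, but as written it is a correct reduction of the corollary to an unperformed finite check, not a proof of it.
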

\begin{proof}
Again, multiplication by 13 is injective and 
\[
v_{1}\equiv 6\kappa^4 + 6\kappa^2 \lambda + 2\lambda^2 \mod13
\]
is regular on $\mathbb{F}_{13}[\lambda,\Delta_{6}]$. Since further 
\[
v_{2}\equiv \lambda^{28} \equiv \Delta_6^{28} \mod(13, v_1)
\]
the claim follows.
\end{proof}

\begin{rmk}\label{known coefficients}
We should remark that this is closely related to \cite[Theorem 7.4]{Behrens:2011aa} in that, up to scaling, $E_1^6$ and $\Delta_6$ coincide with their $a_1^6$ and $D$.
\end{rmk}

\begin{rmk}
Inverting the cusp form $\Delta_6$ amounts to excluding the cusp. Since $$\Delta_C = 2^{10}\cdot3^{12}\cdot\lambda^2\cdot\Delta_6^2,$$ we could also have inverted $\lambda$ or both factors. Inverting $\lambda$ excludes the elliptic point. The same argument shows that $\ZZ_{(7)}[\kappa^{\pm1},\lambda^{\pm1}]$ is a Landweber exact algebra of height two.
\end{rmk}

\begin{rmk} Let us emphasize the value of hands-on computations as presented here for the primes $p=7,13$. These computations let us recognize that it is sufficient to invert less than the discriminant $\Delta_{C}$ of the curves; if it is inverted, then, as the variety satisfies a universal deformation criterion and all the height loci are non-empty, 
it is apparent from the general theory presented in \cite{Behrens:2010aa} that the sequence $(p,v_{1},v_{2})$ is regular for all primes $p\equiv1\!\mod3$. 
\end{rmk}

%
%
%
%
%
%
%
%
%
%
%%%%%%
\section{Height $3$ TAF via Picard Curves}		\label{SecTAF U(2,1)} 
%%%%%%
%
%
%
%
%
%
%
%
%
%
%
%
%
%
%
%%%%%%
\subsection{Picard Curves}
%%%%%%
%
%
%
%
We start by considering Shiga's curves \cite{Shiga:1988aa}, which are a special case of Picard curves; namely the case where all roots of the defining polynomial are in the base ring. This amounts to considering the distinguished level structure of the usual lattice in the unitary group, i.e.\ the arithmetic subgroup $\Gamma' = U(H_3;\mathbb{Z}[\zeta_3])[\sqrt{-3}]\cong U(2,1;\mathbb{Z}[\zeta_3])[\sqrt{-3}]$.

%
%
%
%
%%%%%%%%%%
\subsubsection*{Shiga's Family}				\label{SectionShigasFamily}
%%%%%%%%%%
%
%
%
%
Consider a plane curve defined by
\begin{equation}\label{Shiga family}
C\colon y^{3}=x(x-\xi_{0})(x-\xi_{1})(x-\xi_{2}).
\end{equation}
Passing to the projective closure adds one point at infinity, which is easily seen to be smooth. On the other hand, smoothness of the affine part is equivalent to the non-vanishing of the discriminant
\[
\Delta_{C}=\left(\xi_{0}\xi_{1}\xi_{2}(\xi_{0}-\xi_{1})(\xi_{1}-\xi_{2})(\xi_{2}-\xi_{0})\right)^{2}.
\]
In the smooth case, i.e.\ if $\Delta_{C}\neq0$, the curve \eqref{Shiga family} has genus three, and a basis for the space of holomorphic differentials is given by
\[
\frac{dx}{y^{2}},\ \frac{xdx}{y^{2}},\ \frac{dx}{y}. 
\]
There is an obvious action of the cyclic group $C_{3}$ on the curve, generated by
\[
(x,y)\mapsto(x,\zeta_{3}y),
\]
and the induced action on the space of differentials singles out the space spanned by $dx/y$. As a consequence, the Jacobians of these curves are ppavs with  $\ZZ[\zeta_{3}]$--endomorphisms of type $(2,1)$. 

Let us be a bit more precise. As can be deduced directly from their definition, smooth Shiga-Picard curves are parametrized by $$\Xi =  \{\left[\begin{smallmatrix}\xi_{0}\\\xi_{1}\\ \xi_{2}\end{smallmatrix}\right]\in\mathbb{P}^{2}:\Delta_{C}\neq0\}.$$ This can be extended to the case where we allow arbitrary Picard curves (cf. Section \ref{SectionDegenerations}), viz.\ where the describing polynomial has multiple roots, which are then parametrized by $\mathbb{P}^2$ itself.

Picard \cite{Picard:1883aa} showed that there is a period mapping $$\Phi \colon \Xi \to \mathbb{S}_3$$ to the Siegel upper half space, with dense image in the subvariety $\mathbb{C}H^2\subset \mathbb{S}_3$. This map induces a biholomorphic correspondence 
\begin{align}	\label{PeriodIso} 
\Phi_* \colon  \mathbb{P}^2  \xrightarrow{\cong}(\mathbb{C}H^2/\Gamma')^*,
\end{align}
between $\mathbb{P}^2$ and the Satake--Baily--Borel compactification $(\mathbb{C}H^2/\Gamma')^*$ of the quotient of $\mathbb{C}H^2$ by the arithmetic subgroup $\Gamma' \cong U(2,1;\mathbb{Z}[\zeta_3])[\sqrt{-3}]$ corresponding to the distinguished level structure over the Eisenstein integers.

Further, he showed that the inverse, now called (Picard) modular function, of this period mapping is a single valued automorphic function on $\mathbb{C}H^2$, that can be represented via Riemann theta functions. Automorphicity of the modular function is relative to the group $\Gamma'$. A holomorphic function $\phi\colon\mathbb{L}_{2}\to\CC$ satisfying
\begin{equation}\label{transformation property}
\phi(\gamma.\left(\begin{smallmatrix}z_{0}\\z_{1}\end{smallmatrix}\right))=(\gamma_{31}z_{0}+\gamma_{32}z_{1}+\gamma_{33})^{3k}\phi\left(\begin{smallmatrix}z_{0}\\z_{1}\end{smallmatrix}\right)
\end{equation}
for all $\gamma=(\gamma_{ij})\in\Gamma'$ will be called {\em automorphic form for $\Gamma'$ of topological degree $6k$}; we denote the finite-dimensional $\CC$--vector space of such forms by $M_{6k}^{\CC}(\Gamma')$, and write $M_{*}^{\CC}(\Gamma')=\bigoplus_{k\in\NN_{0}}M_{6k}^{\CC}(\Gamma')$ for the corresponding graded ring. 

In \cite{Shiga:1988aa} Shiga reviews work of Picard and improves it by showing how the modular function can be expressed using theta constants. More precisely, he shows that the inverse to \eqref{PeriodIso} is induced by
$$ \left(\begin{smallmatrix}z_{0}\\z_{1}\end{smallmatrix}\right) \mapsto \left[\begin{smallmatrix} \phi_{0}\\ \phi_{1}\\ \phi_{2}\end{smallmatrix}\right] =: \left[\begin{smallmatrix}\xi_{0}\\\xi_{1}\\ \xi_{2}\end{smallmatrix}\right] \in \mathbb{P}^2,$$ where $$ \phi_k\left(\begin{smallmatrix}z_{0}\\z_{1}\end{smallmatrix}\right) = \theta^3\left[ \begin{smallmatrix} 0 & 1/6 & 0 \\ k/3 & 1/6 & k/3 \end{smallmatrix} \right] (0, \tilde\iota_*\left(\begin{smallmatrix}z_{0}\\z_{1}\end{smallmatrix}\right) )$$ and $\tilde\iota_*$ is essentially the twisted embedding of Section \ref{SectionTwistedEmbedding}. 

\begin{rmk}\label{Picard's period matrix}
Explicitly, let
\[
\left(\begin{smallmatrix}A&B\\C&D\end{smallmatrix}\right)=\left(\begin{smallmatrix}&&1&0&&\\&0&&&-1&\\-1&&&&&0\\0&&&&&1\\&1&&&0&\\&&0&-1&&\end{smallmatrix}\right)\in Sp(3;\ZZ);
\]
applying the corresponding fractional linear transformation to $\iota_{*}\left(\begin{smallmatrix}z_{0}\\z_{1}\end{smallmatrix}\right)$ and using $\zeta_{3}+\zeta_{3}^{2}+1=0$, we get the embedding
\[
\begin{pmatrix}z_{0}\\z_{1}\end{pmatrix}\mapsto\tilde\iota_{*}\begin{pmatrix}z_{0}\\z_{1}\end{pmatrix}=\begin{pmatrix}\frac{\zeta_{3} z_{1}^{2}+2z_{0}}{\sqrt{-3}}& \zeta_{3}^{2}z_{1}&\frac{\zeta_{3}^{2} z_{1}^{2}-z_{0}}{\sqrt{-3}}\\ \zeta_{3}^{2}z_{1}&-\zeta_{3}^{2}& z_{1}\\\frac{\zeta_{3}^{2} z_{1}^{2}-z_{0}}{\sqrt{-3}}& z_{1}&\frac{z_{1}^{2}+2z_{0}}{\sqrt{-3}}\end{pmatrix}.
\]
\end{rmk}

Shiga also characterizes $\xi_{0},\xi_{1}, \xi_{2} $ as automorphic forms of $\mathbb{C}H^2$ relative to $\Gamma'$ and determines the graded ring of (complex) automorphic forms to be
\[
M_{*}^{\CC}(\Gamma')\cong\CC[\phi_{0},\phi_{1},\phi_{2}]
\]
where each $\phi_{i}$ has topological degree six. 
Note that the $\tfrac{\phi_{1}}{\phi_{0}}, \tfrac{\phi_{2}}{\phi_{0}}$ are meromorphic automorphic functions which generate the function field of $(\mathbb{C}H^2/\Gamma')^*$.

\smallskip

Thus, the curves \eqref{Shiga family} can be thought of as an analytic family essentially parametrized by $\CC P^{2}$, and the locus of smooth curves corresponds to 
%\[
%\{[\xi_{0},\xi_{1},\xi_{2}]:\xi_{0}\xi_{1}\xi_{2}(\xi_{0}-\xi_{1})(\xi_{1}-\xi_{2})(\xi_{2}-\xi_{0})\neq0\}\subset\CC P^{2},
%\]
 the complement of the union of six hyperplanes. On the other hand, these smooth curves can be treated as a family over the ring of automorphic forms, adjoining the inverse of the discriminant to ensure smoothness.
%
%
%
%
%%%%%%%%
\subsubsection*{Degenerations}			\label{SectionDegenerations}
%%%%%%%%
%
%
%
%
The parameter space $\Xi$ of smooth Picard curves of genus three (with markings) can be identified with the configuration space of five ordered distinct points in $\mathbb{P}^1$ up to projective equivalence. There is an obvious compactification of $\Xi$ to $\mathbb{P}^2$. As outlined by Deligne and Mostow in \cite{Deligne:1986aa} this compactification is obtained by adding semi-stable configurations, which are configurations in which points may come together in a prescribed way.

Observe that on the curve side smoothness corresponds to the non-vanishing of the discriminant, which prevents multiple roots in the describing polynomial. Introducing semi-stable configurations removes this condition and allows higher multiplicities of these roots. This leads to degenerate curves.

There are three types of degeneration of the curve $$C_{Shiga}: y^3 = x(x-\xi_{0})(x-\xi_{1})(x-\xi_{2}) .$$ We denote these by $C^{deg}$ with subscripts $(2,1,1), (2,2)$ or $(3,1)$ indicating the multiplicities of the roots. Note again that it is not allowed that all four roots fall together as $[0,0,0]\not\in \mathbb{P}^2$.
\bigskip

Let $R' = \mathbb{Z}[\zeta_3,\tfrac{1}{6}][ \xi_{0}, \xi_{1}, \xi_{2}]$. We look at the individual degenerations.

%
%
%
%
%%%%%%%%
\subsubsection*{The (2,1,1)-case}
%%%%%%%%
%
%
%
%
%

If two roots coincide, the curve acquires a double point, decreasing the genus. Without loss of generality we assume that $\xi_{1}=\xi_{2}$; then the differentials
\[
\frac{(x-\xi_{1})dx}{y^{2}},\quad\frac{dx}{y}
\] 
remain holomorphic, so the genus is two, hence the curve is hyperelliptic. To find an affine hyperelliptic model $C'_{hyp}$, we first move the double root to zero, so that we work with $$y^3 = x^2(x-\xi_{0}+\xi_{1})(x+\xi_{1})$$ instead (without introducing new coordinates here). Then, setting $y=tx$ this implies
\begin{align*}
0= x^4 + (2\xi_{1}-\xi_{0}-t^3)x^3 + (\xi_{1}^2-\xi_{0}\xi_{1})x^2
\end{align*}
where we can divide by $x^2$ to obtain
\begin{align}		\label{EqCompletingSquare}
0= x^2 + (2\xi_{1}-\xi_{0}-t^3)x + (\xi_{1}^2-\xi_{0}\xi_{1}).
\end{align}
Assuming characteristic $\neq 2$, we can complete the square by setting 
\begin{align*} 		%\label{EqDefS}
s=2x+(2\xi_{1}-\xi_{0}-t^3),
\end{align*} 
which, using \eqref{EqCompletingSquare}, gives the desired affine model 
\begin{align}
C'_{hyp} : \; s^2 = t^6 -2(2\xi_{1}-\xi_{0})t^3 +\xi_{0}^2
\end{align}
of the hyperelliptic curve, which is of the form \eqref{modular version of Bolza's family}. Conversely, we have
\[
x=\tfrac{s+t^{3}+\xi_{0}-2\xi_{1}}{2}\quad y=t\tfrac{s+t^{3}+\xi_{0}-2\xi_{1}}{2},
\]
and a straightforward calculation shows that the distinguished holomorphic differentials are related via
\[
\frac{dx(s,t)}{y(s,t)}=3\frac{tdt}{s}.
\]

\bigskip

Finally let us look at the image of the (smooth) infinite point of the degenerate Picard curve of genus two in the hyperelliptic description. For this we use the model $$C''_{hyp}: v^2 = 1-2(2\xi_{1}-\xi_{0})u^3 + \xi_{0}^2u^6,$$
where $u= t^{-1}$ and $v= s/t^3$. So we have 
\begin{align}	
u&=\frac{x}{y} \label{EqU} \\ 
v&= \frac{2x +2\xi_{1}-\xi_{0}-t^3}{t^3} = \frac{2x+2\xi_{1}-\xi_{0}}{t^3} -1. \label{EqV} 
\end{align}
To determine $u$ we look at the cube of \eqref{EqU}
\begin{align*}
u^3&=\frac{x^3}{y^3} = \frac{x^3}{x^4+(2\xi_{1}-\xi_{0})x^3 +(\xi_{1}^2-\xi_{0})x^2}
\end{align*}
and use l'Hospital's rule
\begin{align*}
u^3&=\lim_{x\to \infty} \frac{3\cdot 2\cdot 1}{24x +6(2\xi_{1}-\xi_{0})} =0
\end{align*}
to deduce that $u=0$. To determine $v$ we use \eqref{EqV} and l'Hospital's rule 
\begin{align*}
v&=\lim_{x\to \infty}\frac{(2x\cdot x^3) + (2\xi_{1}-\xi_{0})\cdot x^3}{x^4+(2\xi_{1}-\xi_{0})x^3 +(\xi_{1}^2-\xi_{0})x^2} -1\\
&= 2+0 -1 = 1.
\end{align*}
Hence the point at infinity maps to $(u,v)=(0,1)\in C''_{hyp}$, which corresponds to the point at infinity in the affine model $C'_{hyp}$. 

\begin{rmk}
A similar computation shows that the singular point \mbox{$x=\xi_{1}$} on the Picard curve maps to  $(u,v)=(0,-1)\in C''_{hyp} $.  Note that $x=\xi_{1}$ means that $x\to 0$ in the above computation, since we changed coordinates $``x=x+\xi_{1}"$ without introducing new notation.
\end{rmk}

%
%
%
%
%%%%%%%
\subsubsection*{The (2,2)-case}
%%%%%%%
%
%
%
%

The degeneration $C^{deg}_{(2,2)}: y^3 = x^2(x-\xi_{0})^2$ of $C$ has genus one, is thus an elliptic curve. 

Let $R'$ be as above, then $$\mathcal{O}_{C^{deg}_{(2,2)}} = R'[x,y]/(y^3 - x^2(x-\xi_{0})^2)$$ is the coordinate ring of $C^{deg}_{(2,2)}$.
The normalization  $C_{(2,2)}^{norm}$ of $C^{deg}_{(2,2)}$ has coordinate ring $$\mathcal{O}_{C_{(2,2)}^{norm}} = \raisebox{1ex}{\textit{R'}[\textit{x,\,y,\,t}]\big/}\!\! \left( y^3 - x^2(x-\xi_{0})^2, ty=x(x-\xi_{0})\right)$$ exhibiting the Weierstrass form of the associated elliptic curve to be $$C_{(2,2)}^{norm} : Y^2 -\xi_{0}Y = X^3$$ for $Y=x$ and $X=t$.

%
%
%
%
%%%%%%%
\subsubsection*{The (3,1)-case}
%%%%%%%
%
%
%
%

By the same means we check that for the degeneration $$C^{deg}_{(3,1)}: y^3 = x(x-\xi_{0})^3$$ the normalization has coordinate ring
\begin{align*}\mathcal{O}_{C_{(3,1)}^{norm}} &= \raisebox{1ex}{\textit{R'}[\textit{x,\,y,\,t,\,u}]\big/}\!\! \left( y^3 - x^2(x-\xi_{0})^2, ty=x(x-\xi_{0}), u^2 = t, u^3 = x \right) \\
&\cong R'[u].
\end{align*}
In the first step we partially resolve the singularity so that the curve becomes the cuspidal elliptic curve $$ t^3=x^2,$$which we then resolve in the standard way.

%
%
%
%
%%%%%%%%
\subsubsection*{Summary and Consequences}
%%%%%%%%
%
%
%
%

As mentioned above, the curves parametrized by the complement of the smooth locus are degenerate. Turning to the interpretation of $\mathbb{C}H^2/ \Gamma'$ classifying structured abelian three-folds, we find that that points which are not in the image of the smooth locus (under Picard's period map) correspond to products of compatibly structured abelian varieties of lower dimension and cusps. Besides cusps, there are two possible product cases:
\begin{itemize}
\item[(a)] a product $A_2 \times E$, where $A_2$ is a structured abelian two fold and $E$ is a structured elliptic curve, i.e.\  an elliptic curve with complex multiplication by $\mathbb{Z}[\zeta_3]$.
\item[(b)] a triple product $E^{\times 3}$ of structured elliptic curves.
\end{itemize}

The degenerations of type $(2,1,1)$ correspond to products $A_2 \times E$, degenerations of type $(2,2)$ correspond to triple products $E^{\times 3}$, and degenerations of type $(3,1)$ correspond to cusps. 

There are precisely $\binom{4}{2}=6$ possibilities for common roots of type $(2,1,1)$, i.e.\ there are also $6$ hyperplanes in the ball quotient where varieties are of type\! $(a)$. Further, there are $\tfrac{1}{2}\cdot \binom{4}{2}= 3$ elliptic points, i.e.\ products of type\! $(b)$, and $\binom{4}{3}=4$ cusps. The overall arrangement is nicely pictured in \cite{Shiga:1988aa}:

\begin{center}
\begin{tikzpicture}[scale=1.0]

%\draw[loosely dotted] (-3,0) grid (3,6);

\draw[thick] (-2.5, 1) -- (2.5,1);
\draw[thick] (-2.23, 0.7) -- (1.29,5.38);
\draw[thick] (2.08, 0.7) -- (0.915,5.4);

\draw[thick] (-0.08, 0.7) -- (1.11, 5.4);
\draw[thick] (-0.8, 3.25) -- (2.4,0.69);
\draw[thick] (-2.4, 0.78) -- (2,3.3);

\node [red] at (-2, 1) {$\bullet$};
\node [red] at (1.016, 5) {$\bullet$};
\node [red] at (2, 1) {$\bullet$};
\node [red] at (0.338, 2.335) {$\bullet$};

\node [blue] at (0,1) {$\bullet$};
\node [blue] at (1.508,3) {$\bullet$};
\node [blue] at (-0.491,2.99) {$\bullet$};

\end{tikzpicture}
\end{center}

Here the lines are the hyperplanes, the intersection of two hyperplanes, i.e.\ blue dots, are the elliptic points and the triple intersection points, i.e.\ the red dots, are the cusps.

%
%
%
%
%
%
%
%
%
%%%%%%
\subsubsection*{Holzapfel's Family }
%%%%%%
%
%
%
%
%
There are isomorphic curves among the Shiga family (analytically para-metrized by $\CC P^{2}\setminus\{4\ pts.\}$). According to Shiga \cite[p.\ 331]{Shiga:1988aa}, there are isomorphisms  \begin{align*}
PU(2,1;\ZZ[\zeta_{3}])/PU(2,1;\ZZ[\zeta_{3}])[\sqrt{-3}]&\cong S_{4}, \\ SU(2,1;\ZZ[\zeta_{3}])/SU(2,1;\ZZ[\zeta_{3}])[\sqrt{-3}]&\cong S_{4}.
\end{align*}
The latter implies $$S_4 \cong SU(H_3, \mathbb{F}_3) \cong SO(H_3, \mathbb{F}_3),$$ so that then we also find the reduction map $$ U(H_3, \mathbb{Z}[\zeta_3]) \to U(H_3, \mathbb{F}_3) \cong O(H_3, \mathbb{F}_3) \cong C_2\times SO(H_3, \mathbb{F}_3) \cong C_2\times S_4$$ to be surjective, leading to the short exact sequence
$$ U(H_3, \mathbb{Z}[\zeta_3])[\sqrt{-3}] \to U(H_3, \mathbb{Z}[\zeta_3]) \to C_2 \times S_4.$$

The action of $S_{4}$ induces a permutation action on the four cusps of $U(H_{3};\ZZ[\zeta_{3}])[\sqrt{-3}]$. To take care of the $S_{4}$ action, note that by replacing $x$ by $x+\tfrac{1}{4}(\xi_{0}+\xi_{1}+\xi_{2})$ in \eqref{Shiga family}, the sum of the four roots becomes zero; expanding, we obtain the family 
\begin{equation}\label{full Picard family}
C_{Pic}\: : \; y^3 = x^4 + G_2x^2 + G_3x + G_4,
\end{equation}
where $G_{i}$ is $(-1)^{i}$ times the $i^{th}$ elementary symmetric polynomial in the shifted roots. Thus, the coefficients of the curve equation lie in an $S_{4}$--invariant subring of automorphic forms for $U(2,1;\ZZ[\zeta_{3}])[\sqrt{-3}]$, which in turn can be identified with a ring of automorphic forms for the full group $U(2,1;\ZZ[\zeta_{3}])$.

More precisely, the factor group $\Gamma/\Gamma'\cong C_{2}\times S_{4}$ acts on the ring
\[
M_{*}^{\CC}(\Gamma')\cong\CC[\phi_{0},\phi_{1},\phi_{2}].
\] 
Recall that there are two irreducible three-dimensional representations of $S_{4}$ (up to equivalence): one of them, say $\pi$, is obtained by restricting the natural permutation action on $\CC^{4}$ to the invariant subspace $\{x_{1}+x_{2}+x_{3}+x_{4}=0\}$, and the other one, say $\pi'$, differs by the one-dimensional sign representation.  Clearly, we have
\[
\CC[\phi_{0},\phi_{1},\phi_{2}]^{\pi(S_{4})}\cong\CC[G_{2},G_{3},G_{4}].
\]
Since $\diag(-1,-1,-1)\in\Gamma$ acts as multiplication by $-1$ on $M_{6}^{\CC}(\Gamma')$,  and since $\pi$ and $\pi'$ induce the same {\em projective} representation, we conclude
\[
M_{*}^{\CC}(\Gamma)\cong M_{*}^{\CC}(\Gamma')^{C_{2}\times S_{4}}\cong\CC[G_{2},G_{4},G_{3}^{2}].
\]
\begin{rmk} 	\label{RemarkCharacter}
Given a character $\chi\colon\Gamma\to\CC^{*}$, we could consider forms for $\Gamma$ satisfying a modified transformation rule by including a factor $\chi(\gamma)^{k}$ on the RHS of \eqref{transformation property}. As explained in \cite{Holzapfel:1995aa}, the ring $\CC[G_{2},G_{3},G_{4}]$ can be interpreted as the ring of automorphic forms with character
\[
\chi\colon\Gamma\to\{\pm1\},\ g\mapsto(\det g)^{3}\sign(g),
\]
where $\sign(g)$ is the sign representation applied to the image of $g$ in $S_{4}$.
\end{rmk}

\begin{rmk} 
Shiga determines the subring of forms for the full group $\Gamma$ using invariant theory (cf.\ \cite[Proposition II-5]{Shiga:1988aa}); unfortunately, he denotes  the invariants $G_{i}$ as well, leading to a conflict in notation, viz.\ the $G_{i}$'s we defined here differ from the ones given by Shiga as follows:
\[
G_{2}^{S}=-8G_{2},\ G_{3}^{S}=-8G_{3},\ ((G_{2}^{S})^{2}-4G_{4}^{S}=256G_{4}.
\]
\end{rmk}

Observe that smoothness of the family \eqref{full Picard family} is still ensured by the non-vanishing of the  discriminant, which admits the expression
$$ \Delta_{C_{Pic}}=16G_{2}^{4}G_{4}-4G_{2}^{3}G_{3}^{2}-128G_{2}^{2}G_{4}^{2}+144G_{2}G_{3}^{2}G_{4}-27G_{3}^{4}+256G_{4}^{3}.$$
Moreover, there is still the action of the cyclic group $C_{3}$ on the curve, generated by
\[
(x,y)\mapsto(x,\zeta_{3}y),
\]
and the induced action on the space of differentials singles out the space spanned by $dx/y$. As  before, the Jacobians of these curves are ppavs with  $\ZZ[\zeta_{3}]$--endomorphisms of type $(2,1)$. More precisely, we can compose the inverse of the map \eqref{PeriodIso} with the natural projection $$ \mathbb{P}^2 \to \mathbb{P}^2/S_4.$$ 
The map $$(\mathbb{C}H^2/\Gamma)^* \xrightarrow{\cong} \mathbb{P}^2/S_4,$$ induced by passing to the quotient is hence an isomorphism, too.

\subsection{Topological Automorphic Forms}
In this section we describe how to associate TAF-type cohomology theories to isomorphism classes of Picard curves. The coefficients of these theories are rings of automorphic forms for the full arithmetic unitary group. We construct a genus targeting these rings and check Landweber's criterion.

%
%
%
%
%%%%%%%%
\subsubsection*{Local Parameter}				\label{SectionLocalParameter}
%%%%%%%%
%
%
%
%
We want to choose a local parameter at the point at infinity on $C$, which is again a fix point of the $C_3$-action on $C$. As $C$ is a \textit{family} of curves, note that the point at infinity is smooth throughout the family, i.e.\ is smooth in all fibers. Since our family is not defined over a field, the localization at infinity may not be a discrete valuation ring, so it may still not be clear what a local parameter is. We offer the following definition, just forcing things to be as usual.

\begin{df}
Let $C$ be a smooth algebraic curve over ${\rm Spec}(R)$ for a noetherian integral domain $R$. Let further $P$ be a smooth point of the family, such that the localization $\mathcal{O}_{C, (P)}$ is a local ring with principal maximal ideal $\mathfrak{m}$, then a generator $u$ of $\mathfrak{m}$ is called a \textit{local parameter} on $C$ at $P$.
\end{df}
Note that the local ring $S=\mathcal{O}_{C}(C)_{(P)}$ is not necessarily a discrete valuation ring, though in most examples it will be. If not, then, as it is noetherian, there will be a natural number $n$ such that $\{ S, \mathfrak{m}, \ldots, \mathfrak{m}^n=0 \}$ are all ideals in $S$.
%Compare http://mathoverflow.net/questions/155621/condition-for-a-local-ring-whose-maximal-ideal-is-principal-to-be-noetherian for this claim.

\begin{rmk}
The idea behind a local parameter at a point is to map a formal neighborhood of the origin in $\mathbb{A}^1$ onto a formal neighborhood of that point on the curve. It is therefore obvious that local parameters behave well under rational, even analytic, maps, since formal parameters are also generators of the maximal ideal in the formal completion of the localization with respect to the maximal ideal.
\end{rmk}

Thus, if we want to find a local parameter around the point at infinity on $C_{Pic}$, instead of doing this directly, we consider the curve $$C_{(u,v)}\; \colon\; v^3 = 1 + G_2u^6 + G_3u^9 + G_4u^{12}$$ given by setting $x=u^{-3}$ and $y=vu^{-4}$. A local inverse $C_{Pic} \to C_{(u,v)}$ is now an analytic map, as it requires a third root of an element which is a unit in the localization. Note that this root exists in the formal completion and that the point $P=(0,1)$ maps to the point at infinity under the inverse map. Note further, that the distinguished differential $dx/y$ maps to $du/v$, and that it is the unique holomorphic differential (up to scale) which does not vanish at infinity.
Again, this implies that the Torelli map identifies $ T_\infty C_{Pic} $ $C_3$-equivariantly with the $1$-dimensional summand ${\rm Lie}^-({\rm Jac}(C_{Pic}))$.

To find a local parameter we consider the ideal $I=(u, v-1)$, describing the point $P$, in the coordinate ring of $C':=C_{(u,v)}$. Let $$R=\mathbb{Z}[\tfrac{1}{6}, \zeta_3][G_2, G_3, G_4, \Delta_C^{-1}]$$ then
$$ S:=\mathcal{O}_{C'}(C') = \raisebox{1ex}{\textit{R}[\textit{u,v}]\big/}\!\! \left( v^3-1-G_2u^6 -G_3u^9 - G_4u^{12})\right). $$
and 
\begin{align}
v^3 &= 1+G_2u^6 +G_3u^9 + G_4u^{12}\nonumber \\ 
%\Leftrightarrow \; v^3 - 1 &= u^3\cdot p(u^3) \nonumber \\ 
\Leftrightarrow \;(v-1)(v^2+v+1) &= u^3\cdot p(u^3)  \label{LocParU(2,1)}
\end{align}
where $p\in \mathbb{Z}[x]$ and $(v^2+v+1)\not\in I = \mathfrak{m}_{S_{(I)}} \subset S_{(I)}$ is a unit in the localization. Hence, we find  $$ (v-1) \subset (u^3)\subset (u),$$ so $I=(u)$ in the localization $S_{(I)}$ and $u$ is a local parameter at $P$.

\begin{rmk}\label{standard parameter for Picard}
Note that there is another way of getting the local parameter: In the affine chart around $[0,1,0]$, the homogeneous equation $ZY^{3}=X^{4}+G_{2}X^{2}Z^{2}+G_{3}XZ^{3}+G_{4}Z^{4}$ becomes 
\[
\tilde{z}=\tilde{x}^{4}+G_{2}\tilde{x}^{2}\tilde{z}^{2}+G_{3}\tilde{x}\tilde{z}^{3}+G_{4}\tilde{z}^{4},
\]
where $\tilde{x}=X/Y=x/y$ and $\tilde{z}=Z/Y=1/y$. Clearly, $\tilde{x}$ is a local parameter. On the other hand, let
\[
v=(1+G_{2}u^{6}+G_{3}u^{9}+G_{4}u^{12})^{1/3} \in \mathbb{Z}_{(p)}[G_{2},G_{3},G_{4}][\![u]\!],
\]
and put $\tilde{x}=x/y=u^{-3}/(vu^{-4})=u/v$, $\tilde{z}=1/y=u^{4}/v$. Since $v$ is a unit in $\mathbb{Z}_{(p)}[G_{2},G_{3},G_{4}][\![u]\!]$, this shows that $u$ is a local parameter as well.
\end{rmk}

%
%
%
%
%%%%%%%%%
\subsubsection*{The $p$-complete Point of View}
%%%%%%%%%
%
%
%
%
By the same arguments as in Section \ref{SecHyperPcomplete} we find the following.
\begin{prop}

For (rational) primes which split $p=u\bar{u}$ in the imaginary quadratic number field $\mathbb{K}= \mathbb{Z}[\zeta_3]\otimes \QQ$, the formal group $\mathbb{G}_{{\rm Jac}(C)}$ splits $$\mathbb{G}_{{\rm Jac}(C)} \cong \mathbb{G}^{+} \times \mathbb{G}^{-}.$$ The dimension of the formal group $\mathbb{G}^{-}$ is one.

\end{prop}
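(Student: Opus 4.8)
The plan is to replay the three-step argument of Section~\ref{SecHyperPcomplete} essentially verbatim, and then to invoke the type decomposition of the holomorphic differentials of the Picard curve to pin down the dimension of the distinguished summand. First I would observe that the two lemmas of that section used nothing beyond the formal structure: that $\mathbb{G}_{{\rm Jac}(C)}$ is a formal group over a $p$-local ring, and that every $p$-adic integer $c=\lim_n c^{(n)}$ gives rise to a well-defined power series $[c]_{\mathbb{G}_{{\rm Jac}(C)}}$ by the inductive degree estimate forcing convergence of $\sum_i \lim_n c_i^{(n)} x^i$. None of this is special to genus two, so the same argument shows that $\mathbb{G}_{{\rm Jac}(C)}$ is a $\ZZ_p$-module and that $\ZZ_p$ is central in ${\rm End}(\mathbb{G}_{{\rm Jac}(C)})$; consequently the action $R\hookrightarrow {\rm End}(\mathbb{G}_{{\rm Jac}(C)})$ extends along $R\to R\otimes\ZZ_p$.

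Next I would use that for a split prime $p=u\bar u$ there is an isomorphism $\mathbb{Z}[\zeta_3]\otimes\ZZ_p\cong\ZZ_p\times\ZZ_p$, so the unit decomposes as $1=(e_1,e_2)$ into orthogonal idempotents. Transporting these idempotents into ${\rm End}(\mathbb{G}_{{\rm Jac}(C)})$ via the extended action produces the splitting $\mathbb{G}_{{\rm Jac}(C)}\cong\mathbb{G}^{+}\times\mathbb{G}^{-}$.

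To determine that $\mathbb{G}^{-}$ is one-dimensional, I would pass to Lie algebras, where by duality the dimensions of the summands are governed by the $\zeta_3$-action on the space of holomorphic differentials of $C$, spanned by $dx/y^{2}$, $xdx/y^{2}$, and $dx/y$. The $C_3$-action $(x,y)\mapsto(x,\zeta_3 y)$ scales $dx/y^{2}$ and $xdx/y^{2}$ by $\zeta_3$, but scales $dx/y$ by the conjugate eigenvalue $\zeta_3^{-1}$, exactly singling out the one-dimensional eigenspace spanned by $dx/y$ recorded in Section~\ref{SectionShigasFamily}. Hence ${\rm Lie}^-({\rm Jac}(C))$ is one-dimensional and $\mathbb{G}^{-}$ has dimension one.

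The only place where the genus-three case differs meaningfully from the genus-two case is this bookkeeping of eigenspaces: one must verify that the idempotent associated to $\bar u$ selects precisely the conjugate eigenspace (the line spanned by $dx/y$) rather than the two-dimensional eigenspace. This is where the type $(2,1)$ of the $\mathbb{Z}[\zeta_3]$-endomorphism enters, and it is exactly the content already recorded in the discussion of Shiga's family, so I expect no genuine obstacle beyond carefully matching the idempotent to the correct eigenvalue.
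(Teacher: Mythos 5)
Your proposal matches the paper's own argument: the paper proves this proposition by citing that it is ``exactly the same as for Corollary \ref{CorPcompleteSplittingTheFormalGroup}'' (the $\ZZ_p$-module structure and centrality lemmas, extension of the $R$-action to $R\otimes\ZZ_p$, and the idempotents from $\mathbb{Z}[\zeta_3]\otimes\ZZ_p\cong\ZZ_p\times\ZZ_p$), noting only that in dimension three the $\zeta_3$-action now distinguishes just one of the three differentials. Your eigenvalue bookkeeping on $dx/y^{2}$, $x\,dx/y^{2}$, $dx/y$ is exactly the paper's justification that $\mathbb{G}^{-}$ is one-dimensional, so the proof is correct and essentially identical in approach.
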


\begin{proof}
The proof is exactly the same as for Corollary \ref{CorPcompleteSplittingTheFormalGroup}, but note that now ${\rm dim}(\mathbb{G}_{{\rm Jac}(C)})$ is $3$, hence the $\zeta_3$-action actually distinguishes only one (out of three) differential(s).
\end{proof}

Via the theory of \cite{Behrens:2010aa} this  $1$-dimensional summand gives rise to a $p$-complete complex oriented cohomology theory.

%
%
%
%
%%%%%%%%
\subsubsection*{Formal Logarithm}
%%%%%%%%
%
%
%
%

We define a rational genus 
$$ \varphi^{P} \colon MU_*^{\QQ} \to \QQ[G_{2},G_{3},G_{4}]$$
using the distinguished differential $du/v$ as derivative of the logarithm,
\begin{align*}
\log'_{\varphi^{P}}(u) = (1 + G_2u^6 + G_3u^9 + G_4u^{12})^{-1/3}.
\end{align*}
As in the warm up in Section \ref{SectionU11FormalLog}, we have an integrality statement.
\begin{thm}
For each prime $p\equiv1\!\mod3$, the image of $BP_*$ under $\varphi^{P}$ is contained in the subring of $p$-local automorphic forms:
$$ \varphi^{P}(BP_*) \subset M^{\mathbb{Z}_{(p)}}_*(\Gamma)  \cong\mathbb{Z}_{(p)}[G_2, G_4, G_3^{2}]\subset \mathbb{Z}_{(p)}[G_2, G_{3}, G_4].$$

\end{thm}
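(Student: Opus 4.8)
The plan is to mirror the strategy of Theorem \ref{ThmHyperIntegrality}, exploiting the arithmetic fracture square for $\mathbb{Z}_{(p)}$ to reduce the integrality claim to two independent pieces: rationality over $\QQ[G_2,G_3,G_4]$ on the nose (which is immediate from the definition of $\varphi^P$ via the Legendre-type expansion of $(1+G_2u^6+G_3u^9+G_4u^{12})^{-1/3}$), and $p$-integrality, which is where the geometry enters. First I would invoke the $p$-complete splitting of $\mathbb{G}_{{\rm Jac}(C)}\cong\mathbb{G}^+\times\mathbb{G}^-$ established just above, together with the local-parameter computation of Section \ref{SectionLocalParameter} showing that $u$ is a genuine local parameter at the fixed point $P=(0,1)$ of the $C_3$-action on $C'=C_{(u,v)}$. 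The key point is that the Torelli map identifies $T_\infty C_{Pic}$ $C_3$-equivariantly with the one-dimensional summand ${\rm Lie}^-({\rm Jac}(C_{Pic}))$, so that the pullback of the formal differential of $\mathbb{G}^-$ along the parametrization $(u,v(u))$ agrees on the nose with $du/v=(1+G_2u^6+G_3u^9+G_4u^{12})^{-1/3}\,du$.

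Concretely, the steps would be: \emph{(i)} record that $\varphi^P$ is defined over $\QQ[G_2,G_3,G_4]$ by the generating-function expansion, giving the lower edge of the fracture square; \emph{(ii)} observe that $\mathbb{G}^-$, being a split summand of the formal group of an abelian scheme defined over $R=\mathbb{Z}[\tfrac16,\zeta_3][G_2,G_3,G_4,\Delta_C^{-1}]^\wedge_p$, carries a formal group law over the $p$-completion, and that the chosen coordinate $u$ realizes its logarithmic differential as exactly \eqref{HyperDifferential}-analogue $du/v$; \emph{(iii)} conclude that the formal group law of $\varphi^P$ is already defined over $\mathbb{Z}_{(p)}[G_2,G_3,G_4]$, hence so is $\varphi^P(BP_*)$ as $BP_*$ sits inside $MU_*^{\QQ}$ with $p$-typical coordinates; \emph{(iv)} finally, refine the target ring using the degree constraint. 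Since $BP_*$ is concentrated in degrees divisible by $2(p-1)$ and $p\equiv1\bmod 3$ forces $p\equiv1\bmod 6$, the image lands in degrees divisible by twelve, which is precisely the subring $\mathbb{Z}_{(p)}[G_2,G_4,G_3^2]\cong M^{\mathbb{Z}_{(p)}}_*(\Gamma)$ identified earlier via the $C_2\times S_4$-invariants.

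The main obstacle will be step \emph{(iii)}: making rigorous the claim that the formal group law descends from $R^\wedge_p$ to $\mathbb{Z}_{(p)}[G_2,G_3,G_4]$ rather than merely to its $p$-completion. The argument is that the formal group law has coefficients that are simultaneously rational functions (from the rational genus $\varphi^P$) and $p$-adically integral (from the abelian-scheme splitting), and the fracture square pins these down to lie in $\mathbb{Z}_{(p)}$. I would need to check that the two descriptions of the one-dimensional summand genuinely coincide as formal group laws and not just up to isomorphism; this is exactly where the \emph{on the nose} agreement of differentials — guaranteed by the equivariant Torelli identification together with the explicit local parameter $u$ — does the work, just as in the hyperelliptic case. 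The degree bookkeeping in step \emph{(iv)} is routine once one recalls that $G_2,G_3,G_4$ have topological degrees $12,18,24$ respectively, so that any form in degree $\equiv0\bmod 12$ automatically involves $G_3$ only through $G_3^2$.
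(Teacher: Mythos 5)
Your proposal is correct and follows essentially the same route as the paper: the paper's proof of this theorem simply invokes the argument of Theorem \ref{ThmHyperIntegrality}, i.e.\ precisely your fracture-square strategy combining the rational genus, the $p$-complete split summand $\mathbb{G}^-$ with the local parameter $u$ realizing $du/v$ on the nose, and the degree-divisible-by-twelve bookkeeping that forces $G_3$ to appear only in even powers.
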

\begin{proof}
Having a rational genus and a formal group over $\mathbb{Z}[G_2, G_3, G_4]^\wedge_p$, we prove the claim 
by the exact same argument as presented in the proof of Theorem \ref{ThmHyperIntegrality}.
\end{proof}

\begin{rmk}\label{dimensional reasons}
Since
\[
\varphi^{P}(BP_*) \subset \mathbb{Z}_{(p)}[G_2, G_4, G_3^{2}],
\]
we may pass to the $p$--typicalization to obtain a group law defined over this smaller ring.
\end{rmk}

%
%
%
%
%%%%%%%%
\subsubsection*{p-local TAF}
%%%%%%%%
%
%
%
%
Recall that smoothness of the Picard curves \eqref{full Picard family} requires that the discriminant
\[
\Delta_{C}=16G_{2}^{4}G_{4}-4G_{2}^{3}G_{3}^{2}-128G_{2}^{2}G_{4}^{2}+144G_{2}G_{3}^{2}G_{4}-27G_{3}^{4}+256G_{4}^{3}
\]
is nonzero. Write $M_{*}^{\ZZ_{(p)}}(\Gamma, \chi)\cong\ZZ_{(p)}[G_{2},G_{3},G_{4}]$ for the polynomial ring of automorphic forms with character $\chi$ of Remark \ref{RemarkCharacter}.

\begin{cor}		\label{CorTAF at p=7}
Let $p=7$. Then $\varphi^{P}$ gives $M_{*}^{\ZZ_{(7)}}(\Gamma, \chi)[\Delta_{C}^{-1}]$ the structure of a Landweber exact $BP_*$-algebra of height three. In particular, the functors
\[
TAF^{U(2,1;\ZZ[\zeta_3])}_{(7),*}(\cdot):=BP_{*}(\cdot)\otimes_{\varphi^{P}}M^{\ZZ_{(7)}}_{*}(\Gamma, \chi)[\Delta_{C}^{-1}]
\]
define a homology theory.
\end{cor}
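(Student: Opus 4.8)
The plan is to verify Landweber's exactness criterion \cite{Landweber:1976lo} for the $BP_{*}$--algebra
\[
R=M_{*}^{\ZZ_{(7)}}(\Gamma,\chi)[\Delta_{C}^{-1}]\cong\ZZ_{(7)}[G_{2},G_{3},G_{4}][\Delta_{C}^{-1}],
\]
with the $BP_{*}$--structure coming from $\varphi^{P}$, just as in the height-two warm-up of Corollary \ref{height two}. Writing $v_{k}$ for the image of the $k$-th Hazewinkel generator \cite[Appendix A2]{Ravenel:2004xh} under $\varphi^{P}$, I must show that $(7,v_{1},v_{2},v_{3})$ is a regular sequence and, moreover, that $v_{3}$ becomes invertible after inverting $\Delta_{C}$; then $R/(7,v_{1},v_{2},v_{3})=0$, so multiplication by $v_{3}$ is injective on $R/(7,v_{1},v_{2})$ and all conditions for $n\geq 4$ are vacuous. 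Since $R$ is a localisation of a polynomial ring over $\ZZ_{(7)}$, it is a torsion-free integral domain; hence multiplication by $7$ is injective, and the regularity of $v_{1}$ and $v_{2}$ on the successive quotients amounts to the non-vanishing of their reductions.

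First I would compute the low generators from the logarithm $\log'_{\varphi^{P}}(u)=(1+G_{2}u^{6}+G_{3}u^{9}+G_{4}u^{12})^{-1/3}$, using the recursion $p\,\ell_{n}=\sum_{i=0}^{n-1}\ell_{i}\,v_{n-i}^{p^{i}}$ for the $p$--typicalised log coefficients $\ell_{i}$ (Remark \ref{dimensional reasons}). By construction $v_{k}$ lies in topological degree $2(p^{k}-1)$, so $|v_{1}|=12$, $|v_{2}|=96$ and $|v_{3}|=684$. The degree-$12$ part of $\ZZ_{(7)}[G_{2},G_{3},G_{4}]$ is spanned by $G_{2}$, whence $v_{1}\equiv c_{1}G_{2}\bmod 7$; it then suffices to check $c_{1}\in\mathbb{F}_{7}^{\times}$, which makes $v_{1}$ a non-zero element of the domain $R/7$ and hence regular. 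Passing to $R/(7,v_{1})\cong\mathbb{F}_{7}[G_{3},G_{4}][\overline{\Delta}_{C}^{-1}]$, with $\overline{\Delta}_{C}\equiv G_{3}^{4}+4G_{4}^{3}$, the degree-$96$ element $v_{2}$ is a linear combination of $G_{4}^{4}$ and $G_{3}^{4}G_{4}$; reading off its coefficients and confirming it is non-zero exhibits it as a non-zero-divisor on this domain.

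The crux is the third step, which I expect to be the main obstacle. Computing $v_{3}$ requires the coefficient $\ell_{3}$ of the logarithm near $u^{p^{3}}$, so it is a genuinely large explicit calculation in degree $684$. Moreover, in contrast to the height-two case---where $v_{2}\equiv\Delta_{6}^{8}$ was literally a power of the inverted cusp form---here $v_{3}$ cannot be a power of $\Delta_{C}$: one has $|\Delta_{C}|=72$, and $684$ is not a multiple of $72$. I would therefore argue for invertibility geometrically. Having shown $v_{1},v_{2}$ to be a regular sequence, $V(v_{1},v_{2})\subset\mathbb{A}^{3}_{\overline{\mathbb{F}}_{7}}$ is one-dimensional and homogeneous, i.e.\ a finite set of points of $\mathbb{P}^{2}_{\overline{\mathbb{F}}_{7}}$, namely the points of height $\geq 3$. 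Since the one-dimensional summand $\mathbb{G}^{-}$ arises from an abelian three-fold, its height is at most three, so the locus of height $\geq 4$ is empty on the smooth locus; equivalently $V(v_{1},v_{2},v_{3})\subseteq V(\Delta_{C})$. Thus $v_{3}$ vanishes at none of the height-$3$ points, so its image is non-zero in every factor of the localisation $R/(7,v_{1},v_{2})$ (a finite product of graded fields after inverting $\Delta_{C}$), and hence $v_{3}$ is a unit there.

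Putting the steps together, $(7,v_{1},v_{2},v_{3})$ is regular with $v_{3}$ invertible, so Landweber's criterion holds and $R$ becomes a Landweber exact $BP_{*}$--algebra of height three; the integral model of Theorem \ref{ThmHyperIntegrality} (that the formal group law is already defined over $\ZZ_{(7)}[G_{2},G_{3},G_{4}]$) together with the general framework of \cite{Behrens:2010aa} then guarantees that the associated functor is a homology theory. The short regularity checks for $7$, $v_{1}$ and $v_{2}$ are straightforward non-vanishing statements in the domains $\mathbb{F}_{7}[G_{2},G_{3},G_{4}]$ and $\mathbb{F}_{7}[G_{3},G_{4}]$; the delicate point is the degree-$684$ evaluation of $v_{3}$ and the verification that its zero locus in $\mathbb{P}^{2}$ is confined to the singular curves.
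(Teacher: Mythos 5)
Your proposal is correct, and its first three Landweber checks coincide with the paper's (which computes $v_1=-\tfrac13 G_2\equiv 2G_2$ and $v_2\equiv G_4^4-2G_3^4G_4\bmod(7,v_1)$, regular on the domains $\mathbb{F}_7[G_2,G_3,G_4]$ and $\mathbb{F}_7[G_3,G_4]$ --- you still owe these two small coefficient evaluations, e.g.\ that $c_1=-\tfrac13\not\equiv0\bmod 7$), but at the crux you genuinely diverge. The paper proves invertibility of $v_3$ by brute force: an explicit computation giving $v_3^2\equiv\bigl(G_3^{34}(6G_3^4+2G_4^3)\bigr)^2\equiv G_3^{76}+4G_3^{72}G_4^3\equiv\Delta_C^{19}\bmod(7,v_1,v_2)$ --- exactly the workaround your degree count $72\nmid 684$ predicts must occur at the level of $v_3^2$, since $2\cdot684=19\cdot72$. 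You instead argue geometrically: every closed point of $V(7,v_1,v_2)$ with $\Delta_C\neq0$ carries a smooth Picard curve whose summand $\mathbb{G}^-$ has height exactly three, so $v_3$ avoids every maximal ideal and is a unit. This is sound, and it is essentially the general-theory route the paper itself gestures at in a remark at the end of the $U(1,1)$ section (universal deformation criterion, non-empty height loci, \cite{Behrens:2010aa}); but be careful with your one-line justification of the bound: ``$\mathbb{G}^-$ arises from an abelian three-fold'' alone only yields $\mathrm{ht}(\mathbb{G}^-)\leq 6-\mathrm{ht}(\mathbb{G}^+)\leq 4$, since the full formal group of $\mathrm{Jac}(C)$ has height up to six. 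The bound $\leq 3$ requires the splitting of the $p$-divisible group $A[p^\infty]\cong A[u^\infty]\times A[\bar u^\infty]$ together with the polarization input (the Rosati involution acts as conjugation on $\mathbb{Z}[\zeta_3]$ because $\zeta_3$ comes from a curve automorphism preserving the principal polarization), which forces each factor to have height three and hence $\mathbb{G}^-$, its formal part, to have height at most three. The trade-off between the two routes: the paper's computation is elementary and self-contained, and its explicit congruences are reused later (e.g.\ $\Delta_C^{57}\equiv G_3^{228}\bmod(7,v_1,v_2)$ underlies the reduction-of-height theorem, showing it suffices to invert only $G_3$), whereas your argument avoids the degree-$684$ evaluation entirely at the price of importing the Behrens--Lawson machinery, and it generalizes more readily to other split primes.
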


\begin{proof} We check Landweber's criterion: Multiplication by 7 is injective. Next, observe that  $M^{\ZZ_{(7)}}_{*}(\Gamma, \chi)/(7)\cong\mathbb{F}_{7}[G_{2},G_{3},G_{4}]$ is an integral domain, so multiplication by $v_{1}=-\tfrac{1}{3}G_{2}\equiv2G_{2}\mod(7)$ is injective. Similarly, 
\[
v_{2}\equiv G_{4}^{4}-2G_{3}^{4}G_{4}\mod(7,v_{1})
\]
is nonzero, hence regular on the integral domain $M^{\ZZ_{(7)}}_{*}(\Gamma, \chi)/(7,v_{1})\cong\mathbb{F}_{7}[G_{3},G_{4}]$. To establish the claim, it therefore remains to check that $v_{3}$ is invertible mod $(7,v_{1},v_{2})$: we have $\Delta_{C}\equiv G_{3}^{4}+4G_{4}^{3}$, and a computation shows
\[
v_{3}^{2}\equiv (G_3^{34}(6G_3^4+2G_4^3))^{2}\equiv G_{3}^{76}+4G_3^{72}G_{4}^{3}\mod(7,v_{1},v_{2}),
\]
which coincides with $\Delta_{C}^{19}\mod(7,v_{1},v_{2})$.
\end{proof}

\begin{cor}
Let $p=13$. Then $\varphi^{P}$ gives $M_{*}^{\ZZ_{(13)}}(\Gamma, \chi)[\Delta_C^{-1}]$ the structure of a Landweber exact $BP_*$-algebra of height three. In particular, the functors
\[
TAF^{U(2,1;\ZZ[\zeta_3])}_{(13),*}(\cdot):=BP_{*}(\cdot)\otimes_{\varphi^{P}}M^{\ZZ_{(13)}}_{*}(\Gamma, \chi)[\Delta_{C}^{-1}]
\]
define a homology theory.

\end{cor}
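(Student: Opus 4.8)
The plan is to verify Landweber's criterion \cite{Landweber:1976lo} for the sequence $(13,v_1,v_2,v_3)$ on $M_*^{\ZZ_{(13)}}(\Gamma,\chi)[\Delta_C^{-1}]\cong\ZZ_{(13)}[G_2,G_3,G_4][\Delta_C^{-1}]$, following the proof of Corollary \ref{CorTAF at p=7} verbatim in structure; the only genuine difference is that the relevant Hazewinkel generators are now heavier to compute. Here $v_k$ is the image under $\varphi^P$ of the $k$-th Hazewinkel generator. Writing $D_N$ for the coefficient of $u^N$ in the multinomial expansion $\log'_{\varphi^P}(u)=(1+G_2u^6+G_3u^9+G_4u^{12})^{-1/3}=\sum_m\binom{-1/3}{m}(G_2u^6+G_3u^9+G_4u^{12})^m$ and $\ell_n=D_{p^n-1}/p^n$ for the coefficient of $u^{p^n}$ in $\log_{\varphi^P}$, Hazewinkel's recursion $p\ell_n=\sum_{i=0}^{n-1}\ell_i v_{n-i}^{p^i}$ expresses $v_1,v_2,v_3$ explicitly in terms of $D_{12},D_{168},D_{2196}$ (since $p^k-1=12,168,2196$ for $p=13$). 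As before, multiplication by $13$ is injective, so everything reduces to tracking regularity of the $v_k$ modulo the preceding members of the sequence.

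First I would extract $v_1=D_{12}$: only the monomials with $6a+9b+12c=12$ contribute, namely $G_4$ and $G_2^2$, giving $v_1=-\tfrac13G_4+\tfrac29G_2^2\equiv 4G_4+6G_2^2\bmod 13$. Since $\mathbb{F}_{13}[G_2,G_3,G_4]$ is an integral domain this nonzero element is regular, and because $v_1$ is linear in $G_4$ with invertible leading coefficient, the quotient $M_*^{\ZZ_{(13)}}(\Gamma,\chi)/(13,v_1)$ is again an integral domain (isomorphic to $\mathbb{F}_{13}[G_2,G_3]$ via $G_4\equiv 5G_2^2$). Next I would compute $v_2$ from $D_{168}$, reduce it modulo $(13,v_1)$, and check that the reduction is nonzero on this domain, which gives its regularity. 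Finally I would reduce $v_3$ modulo $(13,v_1,v_2)$ and verify that the outcome is a unit multiple of a power of the discriminant $\Delta_C$ — the $p=13$ analogue of the relation $v_3^2\equiv\Delta_C^{19}$ established at $p=7$ in Corollary \ref{CorTAF at p=7} — so that after inverting $\Delta_C$ the element $v_3$ becomes invertible and the height is exactly three.

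The main obstacle is purely computational. Unlike the hyperelliptic case at $p=13$ (Corollary \ref{height two p=13}), where $v_1\equiv0$ forces $G_2\equiv0$ and collapses the ambient ring, here $v_1$ genuinely mixes $G_2^2$ and $G_4$, so the reductions modulo $(13,v_1)$ and $(13,v_1,v_2)$ do not simplify to a two-variable problem nearly as cleanly. Worse, $v_3$ requires the coefficient $D_{2196}$ of $u^{2196}$, i.e.\ the full multinomial sum over all $(a,b,c)$ with $2a+3b+4c=732$, which is only feasible by machine: one expands $(1+G_2u^6+G_3u^9+G_4u^{12})^{-1/3}$ over $\mathbb{F}_{13}$ up to order $u^{2197}$, applies the recursion to obtain $v_1,v_2,v_3$, and reduces modulo the regular sequence $(13,v_1,v_2)$. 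The one structural fact that must come out correctly, and which I would regard as the crux of the argument, is that $v_3\bmod(13,v_1,v_2)$ is proportional to a power of $\Delta_C$; granting this, invertibility after inverting $\Delta_C$ is immediate and the corollary follows.
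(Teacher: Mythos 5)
Your proposal matches the paper's proof in structure and substance: it verifies Landweber's criterion for $(13,v_1,v_2,v_3)$ with the same $v_1=-\tfrac13 G_4+\tfrac29 G_2^2\equiv 6G_2^2+4G_4\bmod 13$ (regular, with quotient $\mathbb{F}_{13}[G_2,G_3]$ via $G_4\equiv 5G_2^2$), checks that $v_2\bmod(13,v_1)$ is nonzero on that domain, and reduces $v_3$ modulo $(13,v_1,v_2)$ to a unit multiple of a power of the discriminant — the paper's computer-assisted identity being $\Delta_C^{366}\equiv -v_3^6\bmod(13,v_1,v_2)$, which is exactly the structural fact you isolate as the crux. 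One immaterial slip: your aside about Corollary \ref{height two p=13} is inaccurate, since there $v_1\equiv 6\kappa^4+6\kappa^2\lambda+2\lambda^2\not\equiv 0\bmod 13$ and nothing collapses, but this plays no role in your argument.
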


\begin{proof}
We proceed as in the proof of Corollary \ref{CorTAF at p=7}. Multiplication with $13$ is injective on $M^{\mathbb{Z}_{(13)}}_*(\Gamma, \chi)$ and $M^{\mathbb{Z}_{(13)}}_*(\Gamma, \chi)/(13)$ is an integral domain, so multiplication with 
\begin{align*}
v_1&\equiv 6\,{G_{2}}^{2}+4\,G_{4} \mod 13
\end{align*}
is injective on it. Further, $M^{\mathbb{Z}_{(13)}}_*(\Gamma, \chi)/(13, v_1) \cong \mathbb{F}_{13}[G_2, G_3]$ is also an integral domain and thus again multiplication with 
\begin{align*}
v_2 &\equiv 12\,{G_{2}}^{28}+4\,{G_{2}}^{25}{G_{3}}^{2}+10\,{G_{2}}^{22}{G_{3}}^{4}+4\,{G_{2}}^{19}{G_{3}}^{6}
+\\&+7\,{G_{2}}^{16}{G_{3}}^{8}+10\,{G_{2}}^{13}{G_{3}}^{10}+4\,{G_{2}}^{10}{G_{3}}^{12}+\\&+6\,{G_{2}}^{7
}{G_{3}}^{14}+3\,{G_{2}}^{4}{G_{3}}^{16}+8\,G_{2}{G_{3}}^{18} \mod (13, v_1)
\end{align*} 
is injective. Since 
\begin{align*}
v_3&\equiv 2\,{G_{2}}^{27}{G_{3}}^{226}+6\,{G_{2}}^{24}{G_{3}}^{228}+11\,{G_{2}}^{21}{G_{3}}^{230}+3\,{G_{2}}
^{18}{G_{3}}^{232}+\\&+9\,{G_{2}}^{15}{G_{3}}^{234}+2\,{G_{2}}^{12}{G_{3}}^{236}+3\,{G_{2}}^{9}{G_{3}}
^{238}+9\,{G_{2}}^{6}{G_{3}}^{240}+\\&+9\,{G_{2}}^{3}{G_{3}}^{242}+8\,{G_{3}}^{244} \mod(13,v_1,v_2)
\end{align*}
one finds that $\Delta_C^{366} \equiv -(v_3^6) \mod(13,v_1,v_2)$, which proves the claim.
\end{proof}

%
%
%
%%%%%%%%
\subsection{Reduction of Height}\label{restriction and degeneration}
%%%%%%%%
%
%
%
%

Recall from Section \ref{SectionSignature n-1,1} that the natural inclusion $U(H_{2})\to U(H_{3})$ induces an embedding $\mathbb{L}_{1}\to\mathbb{L}_{2}$. In particular, we may pull back the automorphic forms on $\mathbb{L}_{2}$ along this inclusion; comparing $q$--expansions, we conclude
\[
\phi_{0}\left(\begin{smallmatrix}z_{0}\\0\end{smallmatrix}\right)=c\cdot E_{1}^{3}(z_{0}/\sqrt{-3}),\ \phi_{1}\left(\begin{smallmatrix}z_{0}\\0\end{smallmatrix}\right)=\phi_{2}\left(\begin{smallmatrix}z_{0}\\0\end{smallmatrix}\right)=c\cdot E_{3}(z_{0}/\sqrt{-3}),
\]
where $c=\theta^{3}[\begin{smallmatrix}1/6\\1/6\end{smallmatrix}](-\zeta_{3}^{2})$.
Absorbing this constant $c$ and incorporating the identification $\mathbb{H}_{1}\cong\mathbb{L}_{1}$ therefore yields a ring homorphism
\[
r\colon\ZZ_{(p)}[G_{2},G_{3},G_{4}]\to\ZZ_{(p)}[\kappa,\lambda],
\]
which takes the form
\begin{align*}
r(G_{2})&=-\tfrac{3}{8}\lambda+\tfrac{27}{2}\Delta_{6} \\
r(G_{3})&=+\tfrac{1}{8}\kappa\lambda \\
r(G_{4})&=-\tfrac{1}{256}\kappa^{2}(3\lambda+108\Delta_{6}).
\end{align*} 
%\begin{align*}
%r(G_{2})&=-\tfrac{3}{8}E_{1}^{6}+\tfrac{1}{2}E_{3}(E_{1}^{3}-E_{3}) \\
%r(G_{3})&=-\tfrac{1}{8}E_{1}^{6}(E_{1}^{3}-2E_{3}) \\
%r(G_{4})&=-\tfrac{1}{256}(E_{1}^{3}-2E_{3})^{2}(3E_{1}^{6}+4E_{3}(E_{1}^{3}-E_{3})).
%\end{align*} 
This allows us to compare the `Picard genus' $\varphi^{P}$ constructed above to the genus $\varphi^{L}$ constructed before:
\begin{thm}
For each prime $p\equiv1\!\mod3$, the genus $r\circ\varphi^{P}$ is equivalent to $\varphi^{L}$; more precisely, the associated group laws are isomorphic over $\ZZ_{(p)}[\kappa,\lambda]$.
\end{thm}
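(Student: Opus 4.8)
The plan is to produce an explicit \emph{strict} isomorphism $h$, defined over $\ZZ_{(p)}[\kappa,\lambda]$, between the two formal group laws, realized as a change of local parameter at the point at infinity. Over $\QQ[\kappa,\lambda]$ both $r\circ\varphi^{P}$ and $\varphi^{L}$ are carried to the additive group law by their logarithms, so the unique strict isomorphism between them is $h=\exp_{\varphi^{L}}\circ\log_{r\varphi^{P}}$; the entire content of the theorem is therefore the $\ZZ_{(p)}$-integrality of $h$. Writing $P(u)=1+r(G_{2})u^{6}+r(G_{3})u^{9}+r(G_{4})u^{12}$ and $v_{P}=P^{1/3}$, I must exhibit $h\in\ZZ_{(p)}[\kappa,\lambda][\![u]\!]$ with $h'(0)=1$ that pulls back the distinguished hyperelliptic differential $du/(1-2\kappa u^{3}+\lambda u^{6})^{1/2}$ to $du/v_{P}$, since this is precisely the assertion $\log_{\varphi^{L}}\circ h=\log_{r\varphi^{P}}$.

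The shape of $h$ is dictated by the $(2,1,1)$-degeneration, of which $r$ is the algebraic shadow (via the $q$-expansion identities $\phi_{0}\mapsto cE_{1}^{3}$, $\phi_{1}=\phi_{2}\mapsto cE_{3}$ recorded above). On the degenerate Picard curve the distinguished $C_{3}$-eigendifferential $dx/y$ admits two coordinate expressions: $dx/y=-3\,du/v_{P}$ in the chart $x=u^{-3}$, $y=v_{P}u^{-4}$ at infinity, and $dx/y=3\,t\,dt/s=-3\,du_{\mathrm{hyp}}/v_{\mathrm{hyp}}$ in the hyperelliptic chart of \eqref{modular version of Bolza's family}; hence $du/v_{P}=du_{\mathrm{hyp}}/v_{\mathrm{hyp}}$. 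Tracking the root-shift $x\mapsto x+\tfrac14(\xi_{0}+\xi_{1}+\xi_{2})$ relating Shiga's family to the monic form \eqref{full Picard family}, the hyperelliptic local parameter at infinity is $u_{\mathrm{hyp}}=x_{\mathrm{dg}}/y=u(1-\tfrac{\kappa}{4}u^{3})/v_{P}$, so I am led to set
\[
h(u)=u\left(1-\tfrac{\kappa}{4}u^{3}\right)\bigl(1+r(G_{2})u^{6}+r(G_{3})u^{9}+r(G_{4})u^{12}\bigr)^{-1/3}.
\]

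To finish I would verify two things. First, integrality: since $p\equiv1\bmod 3$ and $p$ prime force $p\ge 7$, both $3$ and $4$ are units in $\ZZ_{(p)}$, and $-\tfrac13\in\ZZ_{(p)}\subset\ZZ_{p}$ gives $\binom{-1/3}{k}\in\ZZ_{(p)}$; thus $h\in\ZZ_{(p)}[\kappa,\lambda][\![u]\!]$ with $h'(0)=1$. Second, the differential identity: putting $w=u(1-\tfrac{\kappa}{4}u^{3})$, the equality $du/v_{\mathrm{hyp}}\circ h=du/v_{P}$ becomes, after squaring and clearing $P$, the polynomial identity
\[
\left(w'P-\tfrac13 wP'\right)^{2}=P^{2}-2\kappa w^{3}P+\lambda w^{6},
\]
which I would check using the explicit values $r(G_{2})=-\tfrac{\kappa^{2}+2\lambda}{8}$, $r(G_{3})=\tfrac{\kappa\lambda}{8}$, $r(G_{4})=\tfrac{\kappa^{2}(\kappa^{2}-4\lambda)}{256}$ obtained by substituting $\Delta_{6}=(\lambda-\kappa^{2})/108$ into the stated formulas for $r$. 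The main obstacle is exactly this bookkeeping: without the shift factor $(1-\tfrac{\kappa}{4}u^{3})$ the $u^{3}$-coefficients already fail to match (forcing its presence), so the delicate point is confirming that the analytic constant $c$ is fully absorbed into $r$ and that the shift contributes precisely $\kappa/4$, after which the identity reduces to a finite comparison of coefficients.
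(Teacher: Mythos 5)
Your proposal is correct and follows essentially the paper's own route: applying $r$ to \eqref{full Picard family} yields the $(2,1,1)$-degenerate Picard curve $y^{3}=(x-a)(x-b)(x-c)^{2}$ with double root $c=\tfrac{\kappa}{4}$, whose hyperelliptic model is exactly the family \eqref{modular version of Bolza's family}, and your $h(u)=u\left(1-\tfrac{\kappa}{4}u^{3}\right)P(u)^{-1/3}$ is precisely the paper's change of parameter $t^{-1}=(x-c)/y$ expressed in terms of $u$. The only difference is presentational: you spell out what the paper leaves implicit, namely the integrality of $h$ (via $\binom{-1/3}{k}\in\ZZ_{(p)}$) and the matching of logarithms reduced to a finite polynomial identity, whereas the paper deduces both at once from the geometric identification of the two local parameters at infinity.
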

\begin{proof} Applying $r$ to the coefficients of the Picard curve \eqref{full Picard family}, we obtain the family
\[
y^{3}=((x-c)^{2}-(a-c+b-c)(x-c)+(a-c)(b-c))(x-c)^{2},
\]
where $a=-\tfrac{1}{4}(E_{1}^{3}+2E_{3})$, $b=-\tfrac{1}{4}(2E_{3}-3E_{1}^{3})$, $c=-\tfrac{1}{4}(E_{1}^{3}-2E_{3})$. As observed before, this curve admits a hyperelleptic model,
\[
s^{2}=E_{1}^{6}+2(E_{1}^{3}-2E_{3})t^{3}+t^{6},
\]
where $t=y/(x-c)$. We also know that $t^{-1}=(x-c)/y$ serves as a local parameter around a point at infinity of this hyperelliptic curve. On the other hand, it also works as a local parameter for the Picard curve: in the notation of Remark \ref{standard parameter for Picard}, we have $t^{-1}=\tilde{x}-c\tilde{z}$, and $\tilde{z}=O(\tilde{x}^{4})$. Thus, expressing this local parameter in terms of the parameter $u$ induces the required isomorphism of formal group laws.
\end{proof}

It follows that (at least at small primes) $r\circ\varphi^{P}$ gives $\ZZ_{(p)}[\kappa^{\pm1},\lambda^{\pm1}]$ the structure of a Landweber exact algebra of height two.

\begin{thm} Let $p=7,13$. Then the homomorphism $r$ induces a homomorphism of Landweber exact algebras
\[
\ZZ_{(p)}[G_{2},G_{3}^{\pm1},G_{4}]\to\ZZ_{(p)}[\kappa^{\pm1},\lambda^{\pm1}],
\]
decreasing the height by one.
\end{thm}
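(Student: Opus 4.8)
The plan is to verify Landweber's criterion on the localized source ring $\ZZ_{(p)}[G_{2},G_{3}^{\pm1},G_{4}]$ and then to assemble the statement, since both the height-two structure of the target and the compatibility of $r$ with the two genera are already available. First I would check that $r$ is well defined after localization. Since $r(G_{3})=\tfrac18\kappa\lambda$ and $8$ is a unit in $\ZZ_{(p)}$ for $p=7,13$, the element $r(G_{3})$ is invertible in $\ZZ_{(p)}[\kappa^{\pm1},\lambda^{\pm1}]$, so $r$ extends to a ring homomorphism $\ZZ_{(p)}[G_{2},G_{3}^{\pm1},G_{4}]\to\ZZ_{(p)}[\kappa^{\pm1},\lambda^{\pm1}]$. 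By the preceding theorem $r\circ\varphi^{P}$ and $\varphi^{L}$ determine isomorphic formal group laws over $\ZZ_{(p)}[\kappa,\lambda]$, hence over the localization, so $r$ is a homomorphism of $BP_{*}$-algebras and the target carries its height-two Landweber exact structure.

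The substance of the argument is to show that the source, with the $BP_{*}$-structure induced by $\varphi^{P}$, is Landweber exact of height three, refining Corollary \ref{CorTAF at p=7} and its $p=13$ analogue to the case where only $G_{3}$ (rather than the full discriminant $\Delta_{C}$) is inverted. The regularity of the sequence $p,v_{1},v_{2}$ transfers verbatim: the ring $\ZZ_{(p)}[G_{2},G_{3}^{\pm1},G_{4}]$ is torsion free, its reduction modulo $(p)$ is the domain $\mathbb{F}_{p}[G_{2},G_{3}^{\pm1},G_{4}]$, and after reducing modulo $v_{1}$ (which eliminates one variable) one is left with a localized polynomial ring in two variables on which $v_{1}$ and then $v_{2}$ act as nonzero, hence regular, elements.

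The one genuinely new point is that $v_{3}$ must be a \emph{unit} modulo $(p,v_{1},v_{2})$, not merely invertible after adjoining $\Delta_{C}^{-1}$. For this I would reuse the congruences already computed: modulo $(p,v_{1},v_{2})$ one has $v_{3}^{2}\equiv G_{3}^{72}\Delta_{C}$ for $p=7$ and $v_{3}^{6}\equiv-\Delta_{C}^{366}$ for $p=13$, so it suffices to prove that $\Delta_{C}$ is a unit in $\ZZ_{(p)}[G_{2},G_{3}^{\pm1},G_{4}]/(p,v_{1},v_{2})$ once $G_{3}$ is inverted. To do so I would describe this quotient explicitly. At $p=7$ the reduction $\bar v_{2}=G_{4}(G_{4}^{3}-2G_{3}^{4})$ splits into two factors that are coprime as soon as $G_{3}$ is a unit, so the Chinese remainder theorem presents the quotient as a product of two domains; on the first factor $\Delta_{C}\equiv G_{3}^{4}$ and on the second $\Delta_{C}\equiv 2G_{3}^{4}$, each a unit, whence $v_{3}^{2}$ and therefore $v_{3}$ is a unit. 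The same strategy applies at $p=13$ after first eliminating $G_{4}$ through $v_{1}$ and factoring the resulting $\bar v_{2}\in\mathbb{F}_{13}[G_{2},G_{3}^{\pm1}]$. Granting height three for the source, height two for the target, and the $BP_{*}$-algebra map $r$, we conclude that $r$ decreases the height by one.

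I expect the main obstacle to be exactly this unit computation at $p=13$. Unlike the transparent $p=7$ case, one must identify the reduction $\bar v_{2}$ and its factorization in $\mathbb{F}_{13}[G_{2},G_{3}^{\pm1}]$ and then verify that $\bar\Delta_{C}$ becomes a power of $G_{3}$ times a unit in each resulting component; conceptually this is the statement that on the height-$\geq 3$ locus modulo $13$ the discriminant vanishes only where $G_{3}$ vanishes, so that inverting $G_{3}$ alone already removes every point of height below three. This is a finite but considerably more involved manipulation of the large polynomials recorded in the $p=13$ corollary.
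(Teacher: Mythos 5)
Your proposal is correct and its architecture coincides with the paper's: localize at $G_{3}$, note that $r(G_{3})=\tfrac18\kappa\lambda$ is a unit so $r$ descends to the localized rings, give the target its height-two Landweber structure via the genus-comparison theorem, and reduce everything to showing $v_{3}$ becomes a unit mod $(p,v_{1},v_{2})$ once $G_{3}$ is inverted. You differ only in how that last fact is certified. The paper quotes closed-form congruences, $\Delta_{C}^{57}\equiv G_{3}^{228}\bmod(7,v_{1},v_{2})$ and $v_{3}^{18}\equiv -G_{3}^{4392}\bmod(13,v_{1},v_{2})$, which together with $v_{3}^{2}\equiv\Delta_{C}^{19}$ (resp.\ $v_{3}^{6}\equiv-\Delta_{C}^{366}$) from the earlier corollaries finish immediately. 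You instead argue structurally, and your $p=7$ computation is correct: $v_{3}^{2}\equiv G_{3}^{72}\Delta_{C}$ is exactly the paper's displayed $G_{3}^{76}+4G_{3}^{72}G_{4}^{3}$, and on the two CRT components $\Delta_{C}\equiv G_{3}^{4}$ and $\Delta_{C}\equiv 9G_{3}^{4}\equiv 2G_{3}^{4}$, which (using $2^{57}\equiv1\bmod 7$) even recovers the paper's congruence. At $p=13$ you defer the factorization of $\bar v_{2}$, but note that your target claim is equivalent to the paper's asserted relation: $v_{3}^{6}\equiv-\Delta_{C}^{366}$ gives $v_{3}^{18}\equiv-\Delta_{C}^{1098}$, and $4392=4\cdot 1098$, so the paper's congruence says precisely that $\Delta_{C}\equiv(\mathrm{unit})\cdot G_{3}^{4}$ on the locus mod $(13,v_{1},v_{2})$ after inverting $G_{3}$. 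So there is no gap, only a finite check left at the same level of detail as the paper itself; what your CRT route buys is the conceptual statement that mod $p$ the discriminant vanishes on the height-$\ge3$ locus only along $G_{3}=0$, while the paper's one-line congruences are machine-checkable but unilluminating (its additional remark that $G_{3}$ factors as $\tfrac18(-\xi_{0}+\xi_{1}+\xi_{2})(\xi_{0}-\xi_{1}+\xi_{2})(\xi_{0}+\xi_{1}-\xi_{2})$, a union of three hyperplanes, is geometric flavor and plays no logical role).
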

\begin{proof} First we note that it suffices to invert $G_{3}$ to obtain a Landweber exact algebra:
For $p=7$, it is easy to verify that
\[
\Delta_{C}^{57}\equiv G_{3}^{228}\mod(7,v_{1},v_{2}).
\]
Similarly, for $p=13$, we have $v_{3}^{18}\equiv-G_{3}^{4392}\mod(13,v_1,v_2)$.
We remark that $G_{3}$ decomposes as a product in the larger ring of automorphic forms with level structure,
\[
G_{3}=\tfrac{1}{8}(-\xi_{0}+\xi_{1}+\xi_{2})(\xi_{0}-\xi_{1}+\xi_{2})(\xi_{0}+\xi_{1}-\xi_{2}),
\]
i.e.\ the preimage of the zero locus of $G_{3}$ is a union of three hyperplanes in $\mathbb{P}^{2}$.
Since $r(G_{3})=\tfrac{1}{8}\kappa\lambda$, the claim now follows.
\end{proof}

\begin{rmk} Inverting $\kappa\lambda$ amounts to removing two points from the (compactified) moduli space. However, the point corresponding to the cusp is still there, so we can decrease the height even further by restricting to the degenerate curve $y^{2}=(1-x^{3})^{2}$.
\end{rmk}

\begin{rmk} Over the complex numbers, every automorphic form for $\Gamma$ admits a Taylor expansion around $z_{1}=0$. Our restriction homomorphism yields the zeroth term of this expansion, and it is easy to show that all higher coefficients are also complex automorphic forms for the group $G$ (of shifted weight).
\end{rmk}

\begin{rmk}\label{a supersingular Picard curve}
Consider a curve given by
\[
y^{3}=x(x-1)(x-\zeta_{3})(x-\zeta_{3}^{2})=x(x^{3}-1);
\]
if $p\equiv1\mod 3$ but $9\nmid (p-1)$, then the associated one-dimensional formal group law over $\mathbb{F}_{p}$ has height at least three, since $v_{1}=0=v_{2}$ for dimensional reasons; if the $p$--valuation of the coefficient of $u^{p^{3}-1}$ in $(1-u^{9})^{-1/3}$ happens to be 2, then the height is indeed three.
\end{rmk}

\appendix
\section{TAF with Level Structure}
Of course, it is possible to construct a $TAF$--type homology theory directly from the family of curves \eqref{Shiga family}; let us sketch the argument: In the usual affine chart around $[0,1,0]$, the homogeneous equation $ZY^{3}=X(X-\xi_{0}Z)(X-\xi_{1}Z)(X-\xi_{2}Z)$ becomes 
\[
\tilde{z}=\tilde{x}(\tilde{x}-\xi_{0}\tilde{z})(\tilde{x}-\xi_{1}\tilde{z})(\tilde{x}-\xi_{2}\tilde{z}),
\]
where $\tilde{x}=X/Y=x/y$ and $\tilde{z}=Z/Y=1/y$. Clearly, $\tilde{x}$ is a local parameter. On the other hand, let
\[
v=((1-\xi_{0}u^{3})(1-\xi_{1}u^{3})(1-\xi_{2}u^{3}))^{1/3}\in\ZZ_{(p)}[\xi_{0},\xi_{1},\xi_{2}][\![u]\!],
\]
and put $\tilde{x}=x/y=u^{-3}/(vu^{-4})=u/v$, $\tilde{z}=1/y=u^{4}/v$. Since $v$ is a unit in $\ZZ_{(p)}[\xi_{0},\xi_{1},\xi_{2}][\![u]\!]$, this shows that we may as well parametrize the curve near the infinite point by $u$. With this choice of parameter, the distinguished differential becomes

$$ \frac{du}{v} = \left( (1-\xi_{0}u^3)(1-\xi_{1}u^3)(1-\xi_{2}u^3) \right)^{-1/3}du.$$
As before, we define a rational genus $$\varphi^{S}\colon MU_*^{\QQ} \rightarrow M_*^{\QQ}(\Gamma')\cong\QQ[\xi_{0},\xi_{1},\xi_{2}] $$ by $$ {\rm log}'_{\varphi^{S}}(u) = \left( (1-\xi_{0}u^3)(1-\xi_{1}u^3)(1-\xi_{2}u^3) \right)^{-1/3}.$$

We have the following integrality statement:
\begin{thm}		\label{ThmIntGenusU(2,1)Level}
For each prime $p\equiv1\!\mod3$, the image of $BP_*$ under $\varphi^{S}$ is contained in the subring of $S_{3}$--invariant $p$-local automorphic forms:
$$ \varphi^{S}(BP_*) \subset M^{\mathbb{Z}_{(p)}}_*(\Gamma[\sqrt{-3}])^{S_{3}}  \cong\mathbb{Z}_{(p)}[\sigma_1, \sigma_2, \sigma_3], $$
where $\sigma_i:=\sigma_i(\xi_{0}, \xi_{1}, \xi_{2})$ is the $i$-th symmetric function in the roots $\xi_{0}, \xi_{1}, \xi_{2}$ of the polynomial describing the curve $C$.
\end{thm}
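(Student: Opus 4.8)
The plan is to follow the template of the proof of Theorem~\ref{ThmHyperIntegrality} (and its analogue for $\varphi^{P}$), splitting the assertion into two independent parts: that $\varphi^{S}(BP_{*})$ consists of \emph{symmetric} forms, and that it consists of \emph{$p$-integral} forms. The first part is immediate. Since
\[
\log'_{\varphi^{S}}(u)=\big((1-\xi_{0}u^{3})(1-\xi_{1}u^{3})(1-\xi_{2}u^{3})\big)^{-1/3}
\]
is manifestly invariant under permuting $\xi_{0},\xi_{1},\xi_{2}$, every coefficient of $\log_{\varphi^{S}}$, hence of the associated formal group law, and hence every value $\varphi^{S}(x)$, is a symmetric polynomial in the $\xi_{i}$. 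Thus $\varphi^{S}(MU_{*}^{\QQ})\subset\QQ[\sigma_{1},\sigma_{2},\sigma_{3}]$ on the nose, accounting for the $S_{3}$-invariance.

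For the integrality I would argue through the arithmetic fracture square $\ZZ_{(p)}=\QQ\times_{\QQ_{p}}\ZZ_{p}$ exactly as in Theorem~\ref{ThmHyperIntegrality}. The rational genus supplies the formal group law over $\QQ[\xi_{0},\xi_{1},\xi_{2}]$; on the $p$-complete side, since $p\equiv1\!\mod3$ splits in $\ZZ[\zeta_{3}]$, the formal group of ${\rm Jac}(C)$ splits off the one-dimensional summand $\mathbb{G}^{-}$, and the fixed point $P=(0,1)$ of the $C_{3}$-action furnishes the equivariant identification $T_{P}C\cong{\rm Lie}^{-}({\rm Jac}(C))$. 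Using the local parameter $u$ constructed in the appendix --- where $v=((1-\xi_{0}u^{3})(1-\xi_{1}u^{3})(1-\xi_{2}u^{3}))^{1/3}$ is a unit in $\ZZ_{(p)}[\xi_{0},\xi_{1},\xi_{2}][\![u]\!]$ --- the parametrization $(u,v(u))$ pulls back the formal differential of $\mathbb{G}^{-}$ to the coordinate expression $du/v$ on the nose. Since this is exactly $\log'_{\varphi^{S}}(u)$, the two laws agree, and the geometric one is patently defined over $\ZZ[\xi_{0},\xi_{1},\xi_{2},\Delta_{C}^{-1}]^{\wedge}_{p}$; the fracture square then forces the (a~priori rational) coefficients into $\ZZ_{(p)}[\xi_{0},\xi_{1},\xi_{2}]$, so that $\varphi^{S}(BP_{*})\subset\ZZ_{(p)}[\xi_{0},\xi_{1},\xi_{2}]$.

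Finally I would combine the two parts. By the fundamental theorem of symmetric functions over the ring $\ZZ_{(p)}$, a symmetric polynomial with coefficients in $\ZZ_{(p)}$ is automatically a polynomial in $\sigma_{1},\sigma_{2},\sigma_{3}$ with coefficients in $\ZZ_{(p)}$; equivalently $\ZZ_{(p)}[\xi_{0},\xi_{1},\xi_{2}]\cap\QQ[\sigma_{1},\sigma_{2},\sigma_{3}]=\ZZ_{(p)}[\sigma_{1},\sigma_{2},\sigma_{3}]$. Intersecting the two inclusions just obtained therefore lands $\varphi^{S}(BP_{*})$ in $\ZZ_{(p)}[\sigma_{1},\sigma_{2},\sigma_{3}]\cong M_{*}^{\ZZ_{(p)}}(\Gamma[\sqrt{-3}])^{S_{3}}$, which is the claim. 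I expect the only genuinely delicate point to be the same one as in the body of the paper: verifying that the chosen local parameter $u$ produces \emph{precisely} the normalized formal differential of the geometric summand, so that the rational law is literally identified with the $p$-integral law of $\mathbb{G}^{-}$ rather than merely isomorphic to it. Everything else is bookkeeping with symmetric functions and the fracture square.
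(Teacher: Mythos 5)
Your proposal is correct and follows essentially the same route as the paper: the paper's proof is a one-line reference back to the fracture-square argument of Theorem~\ref{ThmHyperIntegrality}, with the formal group taken over $\ZZ[\sigma_1,\sigma_2,\sigma_3,\Delta_C^{-1}]^{\wedge}_p$ and the local parameter $u$ from the appendix identifying $du/v$ with $\log'_{\varphi^S}$ exactly as you describe. The only difference is cosmetic: the paper builds the $S_3$-invariance directly into the coefficient ring of the geometric formal group, whereas you first land in $\ZZ_{(p)}[\xi_0,\xi_1,\xi_2]$ and then invoke the fundamental theorem of symmetric polynomials over $\ZZ_{(p)}$ --- a step the paper leaves implicit.
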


\begin{proof}
Having a rational genus and a formal group over $\mathbb{Z}[\sigma_1, \sigma_2, \sigma_3, \Delta_{C}^{-1}]^\wedge_p$, we prove the claim 
by the exact same argument as presented in the proof of Theorem \ref{ThmHyperIntegrality}
\end{proof}

\begin{rmk} It is possible to characterize an arithmetic subgroup $\Gamma''\subset\Gamma=U(H_{3};\ZZ[\zeta_{3}])$ with ring of automorphic forms isomorphic to $\CC[\sigma_{1},\sigma_{2},\sigma_{3}]$. To this end,  we remark that the group 
\[
\Gamma_{1}[\sqrt{-3}]=\{\gamma\in\Gamma: \gamma\equiv\left(\begin{smallmatrix}1&*&*\\&1&*\\&&1\end{smallmatrix}\right)\mod\sqrt{-3}\}
\]
contains the principal congruence subgroup of level $\sqrt{-3}$ as a normal subgroup. It is easy to verify that $\Gamma_{1}[\sqrt{-3}]/\Gamma[\sqrt{-3}]\cong C_{3}$, where the cyclic group is generated by the reduction of $\left(\begin{smallmatrix}1&-1&\zeta_{3}\\&1&1\\&&1\end{smallmatrix}\right)\in\Gamma_{1}[\sqrt{-3}]$. This cyclic group induces a cyclic permutation of the forms $\phi_{0},\phi_{1},\phi_{2}$ and can be enlarged to $S_{3}\cong C_{3}\rtimes C_{2}$, e.g.\ by taking the $C_{2}$ generated by $\diag(1,-1,1)$. Since the action of the latter element indeed permutes $\phi_{1}$ and $\phi_{2}$, it follows that the desired arithmetic group is
\[
\Gamma''=\{\gamma\in\Gamma: \gamma\equiv\left(\begin{smallmatrix}1&*&*\\&*&*\\&&1\end{smallmatrix}\right)\mod\sqrt{-3}\}.
\]
\end{rmk}

Observe that in terms of the elementary symmetric polynomials, the discriminant reads
\[
\Delta_{C}=\sigma_{3}^{2}\left(\sigma_2^2\sigma_1^2-4\sigma_2^3-4\sigma_3\sigma_1^3+18\sigma_3\sigma_2\sigma_1-27\sigma_3^2\right).
\]
As before, we can construct a homology theory.
\begin{cor}\label{CorTAF at p=7 withLevel}
Let $p=7$ and let $M_{*}^{\ZZ_{(7)}}\!\cong\ZZ_{(7)}[\sigma_{1},\sigma_{2},\sigma_{3}]$ be the $S_{3}$--invariant subring of $7$--local automorphic forms for the congruence subgroup $U(H_{3};\ZZ[\zeta_{3}])[\sqrt{-3}]$. Then $\varphi^{S}$ gives $M_{*}^{\ZZ_{(7)}}[\Delta_{C}^{-1}]$ the structure of a Landweber exact $BP_*$-algebra of height three. In particular, the functors
\[
TAF^{U(2,1;\ZZ[\zeta_3])[\sqrt{-3}]}_{(7),*}(\cdot):=BP_{*}(\cdot)\otimes_{\varphi^{S}}M^{\ZZ_{(7)}}_{*}[\Delta_{C}^{-1}]
\]
define a homology theory.
\end{cor}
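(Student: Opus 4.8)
The plan is to verify Landweber's criterion \cite{Landweber:1976lo} for the sequence $(7, v_1, v_2, v_3)$ over $M_*^{\ZZ_{(7)}}[\Delta_C^{-1}] \cong \ZZ_{(7)}[\sigma_1, \sigma_2, \sigma_3][\Delta_C^{-1}]$, following verbatim the template of Corollary \ref{CorTAF at p=7}. By Theorem \ref{ThmIntGenusU(2,1)Level} the genus $\varphi^S$ already lands in $\ZZ_{(7)}[\sigma_1, \sigma_2, \sigma_3]$, so I would work inside this polynomial ring throughout. The first task is to pin down the $v_i$ explicitly. Writing $t = u^3$, the defining relation reads
\[
\log'_{\varphi^S}(u) = (1 - \sigma_1 t + \sigma_2 t^2 - \sigma_3 t^3)^{-1/3} = \sum_{k \ge 0} c_k\, t^k,
\]
with $c_k \in \ZZ[\sigma_1, \sigma_2, \sigma_3]$, so that $\log_{\varphi^S}(u) = \sum_k \tfrac{c_k}{3k+1}\, u^{3k+1}$ involves only the exponents $3k+1$. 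Passing to the $p$-typicalization (legitimate since $p \equiv 1 \bmod 3$) and running the Hazewinkel recursion $p\,\ell_n = \sum_{i<n} \ell_i\, v_{n-i}^{p^i}$ on the coefficients $\ell_n = c_{(p^n-1)/3}/p^n$ of $u^{p^n}$ expresses $v_1, v_2, v_3$ in terms of the coefficients $c_2, c_{16}, c_{114}$ carrying the indices $(p^n-1)/3$; in particular $v_1 = c_{(p-1)/3} = c_2$.

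For the regularity checks I would proceed exactly as in Corollary \ref{CorTAF at p=7}. Multiplication by $7$ is injective because the ring is torsion free. A short computation gives $v_1 = c_2 = \tfrac{2}{9}\sigma_1^2 - \tfrac13 \sigma_2 \equiv \sigma_1^2 + 2\sigma_2 \pmod 7$, a nonzero element of the integral domain $\mathbb{F}_7[\sigma_1, \sigma_2, \sigma_3]$ and thus a nonzerodivisor even after inverting $\Delta_C$. Moreover $v_1$ is irreducible (being affine-linear in $\sigma_2$), so $\mathbb{F}_7[\sigma_1, \sigma_2, \sigma_3]/(v_1) \cong \mathbb{F}_7[\sigma_1, \sigma_3]$ is again a domain; consequently it suffices to exhibit $v_2 \bmod (7, v_1)$ as a nonzero polynomial in $\sigma_1, \sigma_3$, which is a finite (if unpleasant) computation guaranteeing injectivity of $v_2$ on $M/(7,v_1)$.

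The final and decisive step is to show that $v_3$ becomes a unit in $\mathbb{F}_7[\sigma_1, \sigma_2, \sigma_3]/(v_1, v_2)[\Delta_C^{-1}]$. Mirroring the full-group case, I expect a congruence of the form $v_3^{\,m} \equiv (\text{unit}) \cdot \Delta_C^{\,n} \pmod{(7, v_1, v_2)}$, with $\Delta_C = \sigma_3^2(\sigma_1^2\sigma_2^2 - 4\sigma_2^3 - 4\sigma_1^3\sigma_3 + 18\sigma_1\sigma_2\sigma_3 - 27\sigma_3^2)$; once $\Delta_C$ is inverted this forces $v_3$ to be invertible and completes the verification. This is the main obstacle: since we now carry three algebraically independent generators $\sigma_1, \sigma_2, \sigma_3$ rather than the single $S_4$-invariant combination of Corollary \ref{CorTAF at p=7}, and since the governing coefficient $c_{114}$ lives in topological degree $684$, the reduction of $v_3$ modulo the ideal $(7, v_1, v_2)$ is a large symbolic calculation that I would organize and certify by computer algebra. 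A useful independent check is available: the substitution $x \mapsto x + \tfrac14 \sigma_1$ identifies Shiga's family with the Picard family \eqref{full Picard family}, inducing a ring homomorphism $\ZZ_{(7)}[G_2, G_3, G_4] \to \ZZ_{(7)}[\sigma_1, \sigma_2, \sigma_3]$ together with a change of local parameter at infinity, hence an isomorphism of the associated formal group laws; as the classes $v_i$ modulo $(7, v_1, \dots, v_{i-1})$ are invariants of the isomorphism type, the congruence above must be compatible with the relation $v_3^2 \equiv \Delta_C^{19} \pmod{(7, v_1, v_2)}$ already established in Corollary \ref{CorTAF at p=7}.
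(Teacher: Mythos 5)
Your proposal is correct and follows essentially the same route as the paper: both verify Landweber's criterion for $(7,v_1,v_2,v_3)$ over $\ZZ_{(7)}[\sigma_1,\sigma_2,\sigma_3]$, using the same $v_1=\tfrac{2}{9}\sigma_1^2-\tfrac13\sigma_2\equiv\sigma_1^2+2\sigma_2 \bmod 7$, the same identification $M/(7,v_1)\cong\mathbb{F}_7[\sigma_1,\sigma_3]$ as an integral domain, and the same final mechanism of a congruence between a power of $v_3$ and a power of $\Delta_C$ modulo $(7,v_1,v_2)$. The only difference is that you defer the explicit mod-$7$ polynomials to computer algebra where the paper exhibits them (finding precisely $v_3^2\equiv\Delta_C^{19} \bmod (7,v_1,v_2)$, consistent with the cross-check you propose via the map to the Picard family), so your outline matches the paper's proof in all essential steps.
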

\begin{proof} Clearly, multiplication with 7 is injective on $M_{*}^{\ZZ_{(7)}}$. Moreover, $M_{*}^{\ZZ_{(7)}}/(7)\cong\mathbb{F}_{7}[\sigma_{1},\sigma_{2},\sigma_{3}]$ is an integral domain, hence multiplication by
\[
v_{1}= \frac{2}{9}\sigma_1^2 - \frac{1}{3}\sigma_2\equiv \sigma_{1}^{2}+2\sigma_{2}\mod(7)
\]
is injective. The latter relation allows us to identify $M_{*}^{\ZZ_{(7)}}/(7,v_{1})\cong\mathbb{F}_{7}[\sigma_{1},\sigma_{3}]$, which is therefore also an integral domain, and one verifies
\[
v_{2}\equiv 2\sigma_{1}^{16} + 2\sigma_{1}^{13}\sigma_{3} +4\sigma_{1}^{10}\sigma_{3}^{2}+6\sigma_{1}^{7}\sigma_{3}^{3} + 3\sigma_{1}^{4}\sigma_{3}^{4}+4\sigma_{1}\sigma_{3}^{5}\mod(7,v_{1}).
\]
Furthermore, a computation shows
\[
v_{3}\equiv 2\sigma_{1}^{12}\sigma_{3}^{34} + 4\sigma_{1}^{9}\sigma_{3}^{35} +\sigma_{1}^{6}\sigma_{3}^{36}+6\sigma_{3}^{38}\mod(7,v_{1},v_{2}),
\]
\[
v_{3}^{2}\equiv 4\sigma_{1}^{12}\sigma_{3}^{72}+\sigma_{1}^{9}\sigma_{3}^{73}+2\sigma_{1}^{6}\sigma_{3}^{74}+\sigma_{3}^{76}\mod(7,v_{1},v_{2})%-3*s1^12*s3^72 + s1^9*s3^73 + 2*s1^6*s3^74 + s3^76
\]
and also
\[
\Delta_{C}^{19}\equiv 4\sigma_{1}^{12}\sigma_{3}^{72}+\sigma_{1}^{9}\sigma_{3}^{73}+2\sigma_{1}^{6}\sigma_{3}^{74}+\sigma_{3}^{76}\mod(7,v_{1},v_{2}).
\]
Thus, upon inverting $\Delta_{C}$, the reduction $v_{3}\mod(7,v_{1},v_{2})$ becomes a unit.
\end{proof}

\begin{rmk}
Note that $\Delta_{C}$ decomposes as a product, $\Delta_{C}=\sigma_{3}^{2}Q$. At least for $p=7$, it is not necessary to invert all factors; indeed, a computation shows
\[
v_{3}^{3}\equiv-\sigma_{3}^{114} \mod (7,v_{1},v_{2}),
\]
hence our genus turns $\ZZ_{(7)}[\sigma_{1},\sigma_{2},\sigma_{3}^{\pm1}]$ into a Landweber exact algebra of height three (since $v_{3}^{6}\equiv\Delta_{C}^{57}$, we also have $v_{3}^{3}\equiv -Q^{57}$, so we could invert $Q$ instead). Note that the zero locus of $\sigma_{3}$ can be identified with a union of three hyperplanes in $\CC P^{2}$, and the union of these hyperplanes corresponds to those Shiga curves for which zero is a root of multiplicity larger than one.
\end{rmk}

\bibliography{refbib_edited}
\end{document}